\newtheorem{remark}[theorem]{Remark}
\begin{document}
\newcommand {\eps} {\varepsilon}
\newcommand {\Z} {\mathbbm{Z}}
\newcommand {\R} {\mathbbm{R}}
\newcommand {\N} {\mathbbm{N}}
\newcommand {\C} {\mathbbm{C}}
\newcommand {\I} {\mathbbm{I}}

\newcommand {\PP} {\mathbbm{P}}
\newcommand {\ang} {\measuredangle}
\newcommand {\e} {{\rm{e}}}
\newcommand {\rank} {{\rm{rank}}}
\newcommand {\Span} {{\mathrm{span}}}
\newcommand {\card} {{\rm{card}}}
\newcommand {\ED} {\mathrm{ED}}
\newcommand {\cA} {\mathcal{A}}
\newcommand {\cO} {\mathcal{O}}
\newcommand {\cF} {\mathcal{F}}
\newcommand {\cC} {\mathcal{C}}
\newcommand {\cN} {\mathcal{N}}
\newcommand {\cV} {\mathcal{V}}
\newcommand {\cG} {\mathcal{G}}
\newcommand {\cB} {\mathcal{B}}
\newcommand {\cD} {\mathcal{D}}
\newcommand {\cP} {\mathcal{P}}
\newcommand {\cQ} {\mathcal{Q}}
\newcommand {\cW} {\mathcal{W}}
\newcommand {\cT} {\mathcal{T}}
\newcommand {\cI} {\mathcal{I}}
\newcommand {\Sn}[1] {\mathcal{S}^{#1}}
\newcommand {\range} {\mathcal{R}}
\newcommand {\kernel} {\mathcal{N}}
\newcommand{\one}{\mathbb{1}}
\renewcommand{\thefootnote}{\fnsymbol{footnote}}
\newcommand{\rle}{\rotatebox[origin=c]{-90}{$\le$}}
\newcommand{\rl}{\rotatebox[origin=c]{-90}{$<$}}
\newcommand{\rg}{\rotatebox[origin=c]{-90}{$=$}}


\title{\bf Angular values of nonautonomous linear dynamical systems: Part II -- 
Reduction theory and algorithm}

\author{Wolf-J\"urgen Beyn\footnotemark[1]\qquad
  Thorsten H\"uls\footnotemark[1]
}
\footnotetext[1]{Department of Mathematics, Bielefeld University,  
33501 Bielefeld, Germany \\
\texttt{beyn@math.uni-bielefeld.de}, \texttt{huels@math.uni-bielefeld.de}}

\maketitle


 \begin{abstract}
   This work focuses on angular values of
     nonautonomous dynamical systems which have been introduced 
     for general random and (non)autonomous dynamical systems in a previous
     publication \cite{BeFrHu20}.
   The angular value of dimension $s$  measures the maximal
   average rotation which an  $s$-dimensional subspace of the phase space
   experiences through the dynamics of a discrete-time linear system. Our main
results relate the notion of angular value to the well-known dichotomy (or
Sacker-Sell) spectrum and its associated spectral bundles. In particular,
we prove a reduction theorem which shows that instead of maximizing over all
subspaces, it suffices to maximize over so-called trace spaces which
have their basis in the spectral fibers. The reduction leads to an algorithm
for computing angular values of dimensions one and two. 
  We apply the algorithm to several systems of dimension up to $4$ and
  demonstrate its efficiency to detect the fastest rotating subspace
  even if it is not dominant under the forward dynamics.
 \end{abstract}

\begin{keywords}
Nonautonomous dynamical systems, 
angular value,
ergodic average,
Sacker-Sell spectrum,
numerical approximation.   
\end{keywords}

\begin{AMS}
37C05, 37E45, 34D09, 65Q10. 
\end{AMS}

\section{Introduction}
\label{sec0}

The concept of angular values was introduced in
  \cite{BeFrHu20} for (non)auto\-nomous and random linear discrete-time dynamical systems.
  The purpose of this notion is to measure the average of the largest principle
  angle between two subspaces at successive times und to
  maximize this value over all subspaces of a fixed dimension.
  There is a fundamental difference between such angular values and the
  well-established theory of Lyapunov
  exponents which measure the exponential growth or decay of vectors
  in a dynamical system,
  see for example \cite[Ch.3.2]{A1998}, \cite{barreira2017}, \cite[Suppl.2]{KH95}.  
Furthermore, there are important differences to other existing notions
 which quantify rotations in dynamical systems. Let  us mention primarily
the rotation number for circle homeomorphisms and for more general maps and flows,
see e.g.\ \cite{AS1989,dMvS1993,KH95,N1971}. 
While the
rotation number measures the oriented angle between vectors
and image vectors in two-dimensional subspaces, 
angular values are based on the principal angles between subspaces
which always
lie in $[0,\frac{\pi}{2}]$.  Even the principal angle
  between one-dimensional subspaces may differ from the oriented
  angle between vectors spanning the subspaces. 
  When using principal angles as a measure one loses information about 
orientation but gains applicability to discrete-time
systems in arbitrary dimensions and to subspaces of arbitrary
dimension.
 
 While \cite{BeFrHu20} sets out the basic definitions and some general relations
  between angular values,  the construction of an algorithm as well as
  explicit formulas are restricted to the autonomous and the random case. It is the
  purpose of this article to provide deeper insight into the general
  nonautonomous case and to present a robust algorithm.

 We consider a nonautonomous linear
difference equation of the form
\begin{equation}\label{diffeq}
  u_{n+1} = A_n u_n,\quad A_n \in \R^{d,d},\quad n \in \N_0
\end{equation}
and assume throughout this paper that all matrices are 
invertible and $A_n$ as well as $A_n^{-1}$ are uniformly bounded.
As regards applications, we think of \eqref{diffeq} to arise from
the linearization of a nonlinear
(non)autonomous dynamical system along a particular trajectory.
In physical
  terms, we have in mind an object which is carried materially by a time-varying fluid
  flow and for which data about its position and orientation have been observed
  at discrete time instances. Then angular values are expected to measure
  the maximal torsional stress of the object.


The solution operator $\Phi$  corresponding to \eqref{diffeq} is defined by
\begin{equation*} \label{solop}
\Phi(n,m) = 
  \begin{cases}
   A_{n-1}\cdot\ldots\cdot A_m,& \text{ for } n > m,\\
   I, & \text{ for } n = m,\\
 A_n^{-1} \cdot \ldots \cdot A_{m-1}^{-1}, & \text{ for } n<m.
  \end{cases}
\end{equation*}
To keep this paper self-contained,
some important definitions and estimates of angles 
from \cite{BeFrHu20} are summarized in Section \ref{sec1}.
Our notion of an $s$-th angular value is based on the averages
\begin{equation}\label{av}
  \frac{1}{n}a_{k+1,k+n}(V), \quad
  a_{k+1,k+n}(V)=\sum_{j=k+1}^{k+n}\ang(\Phi(j-1,0)V,\Phi(j,0)V), \quad
  k\ge 0, n \ge 1. 
\end{equation}
Here $V$ is an element of the Grassmann manifold $\cG(s,d)$ of
$s$-dimensional subspaces of $\R^d$. By
  $\ang(U,V)\in [0,\frac{\pi}{2}]$ 
we denote the largest principal angle between
two subspaces $U,V \in \cG(s,d)$, which can be computed by a singular
value decomposition (SVD), see \cite[Ch.\ 6.4.3]{GvL2013}.
There are several possibilities to pass to a limit in \eqref{av}
and take the supremum over $V \in \cG(s,d)$, for example
\begin{align} \label{innerouter}
  \bar{\theta}_s= \limsup_{n\to \infty} \frac{1}{n} \sup_{V \in \cG(s,d)}
  a_{1,n}(V), \quad \hat{\theta}_s= \sup_{V \in \cG(s,d)}\limsup_{n\to \infty}
  \frac{1}{n} a_{1,n}(V).
\end{align}
Since the supremum over the Grassmannian occurs either
  inside or outside the $\limsup$ we call $\bar{\theta}_s$ the
$s$th inner and $\hat{\theta}_s$ the $s$th outer angular value of the
system.  
  Because of the $\limsup$ in \eqref{innerouter} we give these 
  angular values the additional attribute 'upper', and then define another
  class of 'lower' angular values where $\liminf$ is used instead.
  Finally, we consider uniform versions  of  angular values  
  by first taking the supremum of $a_{k+1,k+n}(V)$ over $k \in \N_0$ and then
  proceeding as above, see Definition \ref{defangularvalues}.
  This distinguishes between long-time dynamics starting at time zero and
  longtime dynamics starting at an arbitrary later time. Such a distinction is quite common in the Lyapunov theory when passing from
Lyapunov exponents to Bohl exponents or from Lyapunov spectra to
  dichotomy spectra (\cite{Barabanov2017}, \cite{dev10}, \cite{hp14}).
 All of the above notions of angular values can be different as
 examples in \cite[Section 3.2]{BeFrHu20} show. 
At the end of Section
\ref{sec4.3} we will reconsider these examples in view of our reduction theory.

Our numerical methods aim at computing outer angular values 
  for general nonautonomous systems. Recall that the more specific theory and
  the algorithm from \cite[Section 5, 6]{BeFrHu20} are restricted to an
  autonomous system \eqref{diffeq}, i.e.\ $A_n = A$ for $n\in\N_0$.
  In this case all first angular values mentioned above agree and can be
  computed from orthogonal bases of invariant subspaces which belong to 
eigenvalues of $A$ of the same modulus.
This reduces the numerical effort substantially to 
a series of Schur decompositions and to one-dimensional optimization.

One major goal of this article is
to develop a reduction theorem which generalizes the autonomous results  
to nonautonomous systems and to subspaces of arbitrary dimension.
We tackle this task in Section \ref{sec4}. 
It turns out that the dichotomy spectrum, also called the Sacker-Sell
spectrum \cite{ss78}, and its accompanying
spectral bundle take over the role of invariant subspaces from the
autonomous case. With every element $V \in \cG(s,d)$ we associate
a subspace $\cT_0(V)$, called the trace space, which has the same dimension
as $V$ and
which has a basis composed of vectors from the fibers; see
\eqref{tracedef}. Our main reduction result
Theorem \ref{cor1} then states that the $\limsup$ of $\frac{1}{n}a_{1,n}(V)$ from
\eqref{av} for the given space $V$ agrees with the $\limsup$ of
$\frac{1}{n}a_{1,n}(\cT_0(V))$ for the trace space.
Similar results are derived for the inner angular values ($s=1$) and
for uniform angular values ($s \ge 1$) in Sections \ref{sec4.3a} and \ref{sec4.4}.

The reduction theorem is the basis for the numerical algorithm which we propose
in \mbox{Section \ref{sec5}.}
Note that angular values are generally not achieved in the most stable
or most unstable directions of the trace spaces. Therefore,
algorithms, which  use forward iteration exclusively,
tend to fail since they follow asymptotic dynamics.

Our overall algorithm consists of the following steps:
\begin{enumerate}
\item Compute an approximation of the dichotomy spectrum.
\item Compute the corresponding spectral bundles and obtain the trace spaces.
\item Determine the supremum of \eqref{av} w.r.t.\ the trace spaces and
  for large values of $n$.
\end{enumerate}
The first two tasks are accomplished by using discrete
  versions of QR-methods from \cite{dev10} and least squares techniques from \cite{hu09}. However, by the third step our approach differs
  substantially from taking QR or SVD decompositions for an overall
  method, which is common for Lyapunov exponents; see
  \cite{de06},\cite{dev10}.
We apply this algorithm to several models  to illustrate various
aspects. We begin with a comparison between explicitly known angular values
and the output of our algorithm  for  cases $s=1,2$. 
 For the classical H\'{e}non system we illustrate the
    somewhat slow convergence of the ergodic averages \eqref{av} as $n \to \infty$
    and we discuss their dependence on the initial time $k$.
    Further, we give a geometric
    interpretation of  angular values as a measure of
    the maximal average angle between 
    successive tangent  lines to the stable  and the unstable fiber bundle within the
    attractor. In this case the stable fiber  rotates
    faster than the unstable one while the latter
    is dominant under the forward dynamics.
    This  demonstrates that our reduction procedure
    is essential in detecting the fastest rotating subspace.\\
    We continue the geometric interpretation of angular values for a
$3$-dimensional extension of the H\'{e}non system and a $4$-dimensional system
of coupled oscillators. For the coupled oscillator model we observe
that angular values are insensitive to the breakdown of
an invariant torus at increasing coupling values. The torus breakdown is known to be related to the ratio of Lyapunov exponents
towards vs.\ inside the torus (\cite{dlr91}, \cite{dl95}, \cite{r2000}).
Rather, angular values reflect the rotation of Floquet spaces associated
with the periodic orbit approached by the trajectory. This behavior occurs
regardless of whether the periodic orbit is imbedded into an invariant torus or not.


\section{Basic definitions and properties}\label{sec1}
To keep the article self-contained,
we summarize in this section some important notions, definitions and results
from \cite{BeFrHu20}. 

\subsection{Angles and subspaces} \label{sec1.0}
Let us begin with a useful characterization of the angle between two
subspaces $V$ and $W$ of $\R^d$, both having the same dimension
$s$.
Principal angles between these subspaces can be computed from the
singular values of $V_B^\top W_B$, where the columns of $V_B$ and
$W_B\in \R^{d,s}$
form orthonormal bases of $V$ and $W$, respectively, see
\cite[Ch.6.4.3]{GvL2013}. The smallest singular value is the cosine of
the largest principal angle which we denote by $\ang(V,W)$. 
We further use the notion 
\begin{align*}
  \ang(v,w) = \ang(\mathrm{span}(v),\mathrm{span}(w)), \quad v,w \in \R^d,
  v,w \neq 0
\end{align*}
in case the subspaces are one-dimensional.

The following characterization of
$\ang(V,W)$ turns out to be essential for the analysis of angular
values. It provides an alternative to the standard
  definition in \cite[Ch.6.4.3]{GvL2013}, see \cite[Supplementary
    materials I]{BeFrHu20} for a proof.
  \begin{proposition}\label{Lemma2}
  Let $V, W\subseteq \R^d$ be two $s$-dimensional subspaces. 
  Then the following relation holds
  \begin{equation*}\label{A1}
    \ang(V,W) =
    \max_{\substack{v\in V \\ v\neq 0} }\min_{\substack{w\in W \\ w \neq 0}} \ang(v,w)
  = \arccos\big(\min_{\substack{v\in V\\\|v\|=1}} \max_{\substack{
      w\in W\\\|w\|=1}} v^{\top} w\big).
  \end{equation*}
\end{proposition}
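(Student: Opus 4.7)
The plan is to prove the two stated equalities in turn. First I show the equivalence of the two max-min expressions on the right, then I identify the cosine quantity with $\sigma_{\min}(V_B^\top W_B)$; the quoted singular-value characterization of $\ang(V,W)$ then closes the argument.

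For the first equality, fix a nonzero $v\in V$. Since $\ang(v,w)$ depends only on $\Span(w)$, I may restrict to unit $w\in W$. The one-dimensional principal angle satisfies $\ang(v,w)=\arccos(|v^\top w|/\|v\|)$, and a sign flip yields $\max_{\|w\|=1,\,w\in W}|v^\top w|=\max_{\|w\|=1,\,w\in W}v^\top w$. Using that $\arccos$ is strictly decreasing,
\[
\min_{w\in W,\,w\neq 0}\ang(v,w) \;=\; \arccos\!\left(\max_{w\in W,\,\|w\|=1}\frac{v^\top w}{\|v\|}\right),
\]
and the analogous manipulation for the outer optimization (restrict to unit $v$, then pull $\arccos$ through the outer $\max$) produces the second right-hand expression.

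For the second equality, let $V_B,W_B\in\R^{d,k}$ have columns forming orthonormal bases of $V$ and $W$. Every unit $v\in V$ has the form $v=V_B x$ with $\|x\|=1$, and with the orthogonal projector $P_W=W_B W_B^\top$ a direct Cauchy-Schwarz computation, with equality at $w=P_W v/\|P_W v\|$ when $P_W v\neq 0$, gives
\[
\max_{w\in W,\,\|w\|=1}v^\top w \;=\; \|P_W v\| \;=\; \|W_B^\top v\| \;=\; \|W_B^\top V_B x\|.
\]
Consequently,
\[
\min_{\|v\|=1,\,v\in V}\;\max_{\|w\|=1,\,w\in W}v^\top w \;=\; \min_{\|x\|=1}\|(V_B^\top W_B)^\top x\| \;=\; \sigma_{\min}(V_B^\top W_B),
\]
by the variational characterization of the smallest singular value. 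Since by definition $\cos\ang(V,W)=\sigma_{\min}(V_B^\top W_B)$, applying $\arccos$ to both sides yields the claim.

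Attainment of all extrema follows from continuity on the compact unit spheres in $V$ and $W$. The one step that is more than routine bookkeeping is the identification $\max_{\|w\|=1,\,w\in W}v^\top w=\|P_W v\|$, which is a direct Cauchy-Schwarz argument rather than a genuine obstacle; everything else reduces to strict monotonicity of $\arccos$ and the singular-value characterization of the largest principal angle.
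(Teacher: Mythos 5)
Your proof is correct and complete. The present paper does not prove Proposition~\ref{Lemma2} — it merely quotes it from the companion paper \cite{BeFrHu20} — so there is no in-text argument to compare against; your derivation working directly from the definition $\cos\ang(V,W)=\sigma_{\min}(V_B^\top W_B)$, combined with $\max_{w\in W,\,\|w\|=1}v^\top w=\|P_Wv\|=\|W_B^\top V_B x\|$ and the variational characterization of the smallest singular value of the square matrix $V_B^\top W_B$, is the natural self-contained route and all steps (the sign-flip absorbing $|\cdot|$, pulling $\arccos$ through $\min$ and $\max$ by monotonicity, the Cauchy--Schwarz equality case, and $\sigma_{\min}(A)=\sigma_{\min}(A^\top)$) check out.
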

The next proposition summarizes 
some well-known properties of the Grassmannian
\begin{equation*} \label{eq1.4}
  \mathcal{G}(s,d) = \{ V \subseteq \R^d \; \text{is a subspace of dimension}
  \; s \},
\end{equation*}
see \cite[Ch.6.4.3]{GvL2013}, \cite{js96}. Throughout the paper $\|\cdot\|$
denotes the Euclidean norm of vectors as well as the associated spectral
norm of matrices.
\begin{proposition} \label{prop2g}
  The Grassmannian $\mathcal{G}(s,d)$ is a compact smooth manifold of
  dimension $s(d-s)$ and a metric space with respect to
  \begin{equation*} 
    d(V,W) = \| P_V - P_W \|,
  \end{equation*}
  where $P_V,P_W$ are the orthogonal projections onto $V$ and $W$, respectively.
 Moreover, the formula
\begin{align*}
  d(V,W) = \sin (\ang(V,W)), \quad V,W \in \mathcal{G}(s,d) 
\end{align*}
holds and  $\ang(V,W)$ defines an equivalent metric on 
$\mathcal{G}(s,d)$ satisfying 
$$
\frac{2}{\pi} \ang(V,W) \le d(V,W)
  \le \ang(V,W).
$$
\end{proposition}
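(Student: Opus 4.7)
The plan is to dispatch the four assertions in the natural order, reducing each to a standard fact about orthonormal frames or to an elementary trigonometric estimate on $[0,\pi/2]$.

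First, I would handle the manifold structure, dimension, and compactness. The Stiefel manifold $\{V_B\in\R^{d,k}:V_B^\top V_B=I_k\}$ is a closed, bounded subset of $\R^{d,k}$, hence compact, and the map sending a frame to its column span is continuous and surjective onto $\cG(k,d)$, yielding compactness of the Grassmannian. For the smooth structure, fix $V_0\in\cG(k,d)$ with orthonormal basis $V_{B,0}$ and choose $V_{B,0}^\perp\in\R^{d,d-k}$ spanning $V_0^\perp$. Every subspace close to $V_0$ admits a unique representation as the column span of $V_{B,0}+V_{B,0}^\perp X$ with $X\in\R^{d-k,k}$, producing a chart onto $\R^{k(d-k)}$; transition maps between two such charts are rational, hence smooth. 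This endows $\cG(k,d)$ with the structure of a smooth compact manifold of dimension $k(d-k)$.

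Next, that $d(V,W)=\|P_V-P_W\|$ is a metric is inherited from the operator norm, except for the separation axiom: $\|P_V-P_W\|=0$ forces $P_V=P_W$, and hence $V=\range(P_V)=\range(P_W)=W$. The core computation is the identity $\|P_V-P_W\|=\sin(\ang(V,W))$. Picking orthonormal bases $V_B,W_B\in\R^{d,k}$ and an orthonormal basis $V_B^\perp\in\R^{d,d-k}$ of $V^\perp$, the matrix $Q=[V_B\;V_B^\perp]$ lies in $O(d)$, so conjugation by $Q^\top$ preserves the operator norm. Setting $C=V_B^\top W_B$ and $S=(V_B^\perp)^\top W_B$, one has $C^\top C+S^\top S=I_k$, and a block computation in the basis $Q$ shows that the nonzero singular values of $P_V-P_W$ are precisely the numbers $\sigma_i(S)$, where $\sigma_i(S)^2=1-\sigma_i(C)^2$. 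Since the principal angles $\theta_i$ of $V,W$ satisfy $\cos\theta_i=\sigma_i(C)$, we obtain $\sigma_i(S)=\sin\theta_i$ and therefore $\|P_V-P_W\|=\sin\theta_{\max}=\sin(\ang(V,W))$.

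Finally, the two-sided comparison $\tfrac{2}{\pi}\ang(V,W)\le d(V,W)\le\ang(V,W)$ is immediate from the elementary bounds $\tfrac{2}{\pi}\theta\le\sin\theta\le\theta$ on $[0,\pi/2]$, valid since principal angles lie in that interval. That $\ang$ itself satisfies the triangle inequality — needed in order to call it a metric — is not automatic from this equivalence; I would derive it from the max-min characterization of Proposition \ref{Lemma2}: given $V,W,U\in\cG(k,d)$ and a unit vector $v\in V$, choose $w_v\in W$ minimizing $\ang(v,w)$ and $u_v\in U$ minimizing $\ang(w_v,u)$, invoke the triangle inequality for vector-angles, $\ang(v,u_v)\le\ang(v,w_v)+\ang(w_v,u_v)\le\ang(V,W)+\ang(W,U)$, and then take the minimum over $u\in U$ and the maximum over $v$. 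The main obstacle is the identification $\|P_V-P_W\|=\sin(\ang(V,W))$, which depends on a careful SVD/CS-style block computation; everything else is either standard differential-topology bookkeeping or a one-line trigonometric estimate.
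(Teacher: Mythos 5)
The paper does not supply a proof of Proposition \ref{prop2g}; it is quoted as a collection of standard facts with citations to \cite[Ch.\ 6.4.3]{GvL2013} and \cite{js96}. Your argument is correct and follows the standard route these references take: compactness via the Stiefel manifold and the continuous span map, charts as graphs over $V_0$ of dimension $k(d-k)$, and the CS-decomposition block computation identifying the spectrum of $Q^{\top}(P_V-P_W)Q$ with $\pm\sin\theta_i$ via $C^{\top}C+S^{\top}S=I_k$, which yields $\|P_V-P_W\|=\sin\theta_{\max}$. You also correctly flagged the one point that is easy to overlook: the bi-Lipschitz bounds $\frac{2}{\pi}\theta\le\sin\theta\le\theta$ make $\ang$ topologically equivalent to $d$, but the triangle inequality for $\ang$ itself needs a separate argument, and your derivation from the max-min characterization in Proposition \ref{Lemma2} together with the line-angle triangle inequality on $\R\PP^{d-1}$ is the right way to close that gap. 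One small remark: when $2k>d$ (so $S$ has fewer rows than columns), at least $2k-d$ of the principal angles are automatically zero, so the matching between $\{\sigma_i(S)\}$ and $\{\sin\theta_i\}$ still goes through after padding with zeros; it is worth stating this explicitly so the block decomposition into $2\times2$ pieces and a zero block is seen to be exhaustive.
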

The following lemma from \cite[Lemma 2.6]{BeFrHu20} is our main tool to
estimate angles of vectors and subspaces in terms of the Euclidean norm.
\begin{lemma} \label{lem0}\ \\[-5mm]
  \begin{enumerate}[i)]
  \item For any two vectors $v,w \in \R^d$ with $\|v\| < \|w\|$ the following holds
    \begin{equation*} \label{eq1.6}
      \begin{aligned}
        \tan^2\ang(v+w,w)& \le \frac{\|v\|^2}{\|w\|^2 - \|v\|^2}.
      \end{aligned}
    \end{equation*}
  \item
    Let $V \in \mathcal{G}(s,d)$ and $P \in \R^{d,d}$ be such that
    for some $0 \le q <1$
    \begin{equation*} \label{eq1.7}
      \| (I-P)v\| \le q \| Pv \| \quad \forall v \in V.
    \end{equation*}
    Then $\dim(V)=\dim(PV)$ and the following estimate holds
    \begin{equation*} \label{eq1.8}
      \ang(V,PV) \le \frac{q}{(1-q^2)^{1/2}}.
    \end{equation*}
  \end{enumerate}
\end{lemma}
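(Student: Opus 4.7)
The plan is to treat the two parts separately but with part~(i) serving as the key analytic lemma behind part~(ii). Part~(i) is a purely planar estimate obtained by decomposing $v$ along and perpendicular to $w$; part~(ii) then follows by applying this estimate pointwise to every vector in $V$ and invoking the max-min characterisation of $\ang(V,PV)$ from Proposition \ref{Lemma2}.

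For part~(i) I would write $v = v_\parallel + v_\perp$ with $v_\parallel = (w^\top v/\|w\|^2)\,w$. Then the orthogonal projection of $v+w$ onto $\Span(w)$ is $v_\parallel + w$ and its perpendicular component equals $v_\perp$. The hypothesis $\|v\|<\|w\|$ together with Cauchy-Schwarz gives $w^\top v + \|w\|^2 \ge \|w\|^2 - \|v\|\|w\| > 0$, so the line spanned by $v+w$ makes an angle strictly less than $\pi/2$ with $\Span(w)$, and
\[
  \tan^2 \ang(v+w, w) = \frac{\|v_\perp\|^2\,\|w\|^2}{(w^\top v + \|w\|^2)^2}.
\]
Setting $s=\|v\|^2$, $t=\|w\|^2$, $a = w^\top v$, and using $\|v_\perp\|^2 = s - a^2/t$, the claimed bound reduces to $(st-a^2)(t-s) \le s(a+t)^2$, which after expansion collapses to $-t(s+a)^2 \le 0$ and is therefore automatic.

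For part~(ii) I would first verify $\dim(PV)=k$: if $v\in V$ satisfies $Pv=0$, then $\|v\| = \|(I-P)v\| \le q\|Pv\| = 0$, so $P$ is injective on $V$. For every nonzero $v \in V$ we then have $Pv \neq 0$ and $\|(I-P)v\| \le q\|Pv\| < \|Pv\|$, putting us in the situation of part~(i) with the roles of $(v,w)$ played by $((I-P)v, Pv)$. This yields
\[
  \tan^2 \ang(v, Pv) \le \frac{\|(I-P)v\|^2}{\|Pv\|^2 - \|(I-P)v\|^2} \le \frac{q^2}{1-q^2},
\]
so $\ang(v,Pv) < \pi/2$ and hence $\ang(v,Pv) \le \tan\ang(v,Pv) \le q/\sqrt{1-q^2}$. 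Since $Pv \in PV$, the max-min formula from Proposition \ref{Lemma2} then gives
\[
  \ang(V,PV) = \max_{0 \neq v \in V}\min_{0 \neq w \in PV}\ang(v,w) \le \max_{0 \neq v \in V}\ang(v,Pv) \le \frac{q}{\sqrt{1-q^2}}.
\]

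The main obstacle is the sharp algebraic identity in part~(i): the naive bound $w^\top v + \|w\|^2 \ge \|w\|(\|w\|-\|v\|)$ only yields the weaker denominator $(\|w\|-\|v\|)^2$, so one must keep $w^\top v$ as an explicit variable and exploit the perfect-square identity $-t(s+a)^2 \le 0$ to recover the factor $\|w\|^2-\|v\|^2$ stated in the lemma. Once part~(i) is in hand, part~(ii) is essentially bookkeeping, the only subtle point being that the injectivity of $P|_V$ is what prevents $Pv$ from vanishing and thus guarantees the applicability of part~(i).
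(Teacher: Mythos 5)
The paper does not reprove this lemma---it cites the companion paper \cite{BeFrHu20}---so there is no internal proof to compare against. Your argument is correct and complete. A few points worth confirming: (a) In part~(i) your identity is exact: writing $s=\|v\|^2$, $t=\|w\|^2$, $a=v^\top w$, one has $\tan^2\ang(v+w,w)=\frac{st-a^2}{(a+t)^2}$, and the cross-multiplied difference $(st-a^2)(t-s)-s(a+t)^2=-t(a+s)^2\le 0$ does collapse to a perfect square, so the sharp factor $\|w\|^2-\|v\|^2$ in the denominator comes out cleanly (a naive Cauchy--Schwarz bound would only give $(\|w\|-\|v\|)^2$, as you observe). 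The hypothesis $\|v\|<\|w\|$ also guarantees $a+t>0$ so the tangent is finite. (b) In part~(ii) the injectivity of $P|_V$ is exactly as you argue, the application of (i) to $((I-P)v,Pv)$ is legitimate because $\|(I-P)v\|\le q\|Pv\|<\|Pv\|$ once $Pv\ne 0$, and the monotonicity of $x\mapsto x/(c-x)$ turns the pointwise bound into $\tan^2\ang(v,Pv)\le q^2/(1-q^2)$. (c) Passing from $\tan\ang(v,Pv)\le q/\sqrt{1-q^2}$ to $\ang(v,Pv)\le q/\sqrt{1-q^2}$ via $\theta\le\tan\theta$ on $[0,\pi/2)$ is valid (and in fact slightly wasteful, since one could keep $\ang(v,Pv)\le\arcsin q$, but the lemma only asks for the weaker bound). (d) The final step via Proposition~\ref{Lemma2}, taking $w=Pv$ as a competitor in the inner minimum, is the right way to pass from the pointwise estimate to the subspace angle. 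The proof stands.
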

 
Finally, we state a linear algebra result which will provide the basic
reduction step in Theorem \ref{Th2}. By $\range(P)$ and $\kernel(P)$ we denote
the range and kernel of a matrix $P$, respectively.
\begin{lemma} \label{linalg}
  Let $V \in \mathcal{G}(s,d)$ and let $P$ be a projector in $\R^d$. Furthermore,
  let $Q$ be any projector defined in $\R^d$ and with range $V\cap \range(P)$.
  Then the linear map
 \begin{align} \label{Lmap}
     L=I-P+Q:  V  \to  (I-P)V \oplus (V \cap \range(P)) 
      \end{align}
  is a bijection and there exists a constant $\rho>0$ such that
  \begin{align} \label{estproj}
    \|P(I-Q)v \| \le \rho \|(I-P)(I-Q)v \| \quad \forall v \in V.
  \end{align}
\end{lemma}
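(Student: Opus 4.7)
The plan is to verify bijectivity of $L$ by a direct computation using the projector properties, and then to deduce the estimate \eqref{estproj} from a compactness argument on a certain complementary subspace of $V$.

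For the bijection part, the first step is to use that $Q:V\to V$ is a projector to decompose any $v\in V$ as $v = Qv + (I-Q)v$. Since $Qv\in V\cap\range(P)\subseteq\range(P)$, one has $(I-P)Qv=0$, which gives the useful identity
\begin{equation*}
  L(v) = (I-P)(I-Q)v + Qv,
\end{equation*}
where the two summands lie in $(I-P)V$ and $V\cap\range(P)$, respectively. Next I would verify that the sum $(I-P)V + (V\cap\range(P))$ is direct: any common element lies in $\range(P)$ but is also of the form $(I-P)w$, hence must vanish. A dimension count via the rank--nullity theorem applied to $(I-P)|_V$, whose kernel is exactly $V\cap\range(P)$, shows
\begin{equation*}
  \dim V = \dim((I-P)V) + \dim(V\cap\range(P)),
\end{equation*}
so domain and codomain of $L$ have equal (finite) dimension. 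It then suffices to prove injectivity: if $L(v)=0$ the directness of the sum forces $(I-P)(I-Q)v=0$ and $Qv=0$ separately, i.e.\ $(I-Q)v\in V\cap\range(P)=\range(Q|_V)$. Applying $Q$ once more, $(I-Q)v$ equals $0$ (by $Q(I-Q)=0$) as well as itself (since it lies in $\range(Q)$), so $v = Qv + (I-Q)v = 0$.

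For \eqref{estproj}, set $w=(I-Q)v$. Because $Q^2=Q$ on $V$, one has $w\in V\cap\kernel(Q|_V)$, and it is enough to prove the estimate for $w$ ranging over this subspace. The key observation is that $(I-P)|_{V\cap\kernel(Q|_V)}$ is injective: if $(I-P)w=0$, then $w\in\range(P)\cap V = \range(Q|_V)$, which together with $w\in\kernel(Q|_V)$ forces $w=0$. Hence $w\mapsto\|Pw\|/\|(I-P)w\|$ is a continuous function on the compact unit sphere of the finite-dimensional space $V\cap\kernel(Q|_V)$, and its supremum is the desired finite constant $\rho$.

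The main obstacle is really bookkeeping rather than a genuinely hard step: one has to keep track of the three different projectors $P$, $I-P$ and $Q$, remember that $Q$ lives only on $V$, and exploit the defining relation $\range(Q)=V\cap\range(P)$ at the right places, both to cancel $(I-P)Qv$ in the bijection argument and to identify $V\cap\kernel(Q|_V)$ as a complement of $V\cap\range(P)$ in $V$ for the estimate.
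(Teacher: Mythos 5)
Your proof is correct and follows essentially the same route as the paper's: establish that the sum $(I-P)V\oplus(V\cap\range(P))$ is direct, prove injectivity of $L$ by forcing both components of $Lv$ to vanish, and bound $\|P(I-Q)v\|$ by $\|(I-P)(I-Q)v\|$ after observing that the kernel of $(I-P)(I-Q)|_V$ is contained in that of $P(I-Q)|_V$. The only noteworthy variations are cosmetic: where the paper proves surjectivity of $L$ by exhibiting an explicit preimage $\tilde v=(I-Q)v+w$, you instead deduce it from injectivity together with a rank--nullity count applied to $(I-P)|_V$; and where the paper invokes an abstract relative-boundedness principle (cf.\ its Remark~\ref{relbounded}), you unfold the standard finite-dimensional proof of that principle via compactness of the unit sphere of $V\cap\kernel(Q|_V)$. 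Both variants are valid and of comparable length.
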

\begin{proof}
  Note that the sum in \eqref{Lmap} is direct since $(I-P)V \subseteq \kernel(P)$.
   For the same reason, if $Lv=0$ for some $v \in V$, then
  $(I-P)v=0$ and $Qv=0$ holds. This shows $v \in V \cap \range(P)=\range(Q)$ 
  and $v = Qv=0$. Thus $L$ is one to one.
  To show that $L$ is onto, take  any $u \in (I-P)V$ and $w \in V
  \cap\range(P)$. Then we have 
  $u=(I-P)v$ for some $v \in V$, and defining $\tilde{v}=(I-Q)v+w\in V$, we
  obtain
  $L\tilde{v} = (I-P)(v+w - Qv)+Qw=(I-P)v+w=u+w$.
  To show the estimate \eqref{estproj} note that
   $0=(I-P)(I-Q)v= (I-P)v$ for some $v \in V$ implies $v \in V \cap \range(P)=\range(Q)$
  and thus $(I-Q)v=0=P(I-Q)v$. Then we obtain \eqref{estproj} from
  the elementary fact
  that two linear maps $A,B\colon V \to \R^d$ satisfy
  $\kernel(B) \subseteq \kernel(A)$ if and only if there exists a constant
  $C>0$ such that $\|Av\|\le C \|Bv\|$ for all $v \in V$.
\end{proof}
\begin{remark} \label{relbounded}
  The last step of the proof is a special case of a result from functional analysis:
  Let $A,B\colon X \to Y$ be linear bounded operators between Banach spaces
  $X$ and $Y$,
  then $A$ is called relatively bounded by $B$ if there
  exists a constant $C>0$ such that $\|Ax\| \le C \|Bx\|$ for all $x \in X$;
  see \cite[Ch.3.7]{Edmunds02}. 
The smallest constant of this type is
\begin{equation} \label{ABbounded}
  \rho(A,B) = \inf\{C>0: \|Ax\|\le C \|Bx\| \quad \forall x\in X \}
  =\sup_{Bx \neq 0} \frac{ \|Ax\|} {\|Bx\|}.
\end{equation}
If  $B$ is  Fredholm one can show
that $A$ is relatively bounded by $B$ iff
$\kernel(B) \subseteq \kernel(A)$.
\end{remark}
\subsection{Definition of angular values}
\label{sec1.1}
  The angular values defined in \cite[Section 3.1]{BeFrHu20} always
  aim at finding the subspace $V$ which maximizes the average
  angle  in \eqref{av}. However, there are several possibilities to
  let time go to infinity. We will use the attributes 'upper' and 'lower'
  to distinguish between $\limsup$ and $\liminf$, and the attributes
  'outer' and 'inner' to distinguish between limits taken outside or
  inside the supremum over the Grassmannian, cf.\ \eqref{innerouter}.
  This motivates the first part of the following definition.

\begin{definition} \label{defangularvalues}
  Let the  nonautonomous system \eqref{diffeq} be given. For $s\in
  \{1,\ldots,d\}$ define the quantities 
  \begin{equation} \label{defsums}
    a_{m,n}(V) = \sum_{j=m}^{n} \ang(\Phi(j-1,0)V,\Phi(j,0)V) \quad 
   m,n \in \N,\; V\in \mathcal{G}(s,d).
    \end{equation}
  \begin{enumerate}[i)]
  \item
    The \textbf{upper} resp.\ \textbf{lower} $s$th \textbf{inner angular value} is defined by
    \begin{equation}\label{dinner}
\begin{aligned}
  \bar \theta_s =  \limsup_{n\to\infty} \frac{1}{n} \sup_{ V \in \mathcal{G}(s,d)}
   a_{1,n}(V),\quad 
  \underaccent{\bar}\theta_s = \liminf_{n\to\infty} \frac{1}{n}
  \sup_{ V \in \mathcal{G}(s,d)}
   a_{1,n}(V).
\end{aligned}
    \end{equation}
  \item
    The \textbf{upper} resp.\ \textbf{lower} $s$th \textbf{outer angular value} is defined by
    \begin{equation*}\label{douter}
\begin{aligned}
  \hat \theta_{s} = \sup_{V \in \mathcal{G}(s,d)}
  \limsup_{n\to\infty} \frac{1}{n}  a_{1,n}(V), \quad
    \underaccent{\hat}\theta_s =\sup_{V \in \mathcal{G}(s,d)}
  \liminf_{n\to\infty} \frac{1}{n}  a_{1,n}(V).
\end{aligned}
    \end{equation*}
    \item
    The \textbf{upper} resp.\ \textbf{lower} $s$th \textbf{uniform inner
      angular value} is defined by 
\begin{equation}\label{duniinner}
\begin{aligned}
  \bar \theta_{[s]} = &
  \lim_{n\to\infty} \frac{1}{n} \sup_{V \in \mathcal{G}(s,d)}
  \sup_{k\in\N_0}a_{k+1,k+n}(V),\\ 
  \underaccent{\bar}\theta_{[s]} = & 
  \liminf_{n\to\infty} \frac{1}{n}\sup_{V \in \mathcal{G}(s,d)} \inf_{k\in\N_0} a_{k+1,k+n}(V).
\end{aligned}
\end{equation}
  \item
    The \textbf{upper} resp.\ \textbf{lower} $s$th \textbf{uniform outer angular value} is defined by
\begin{equation*}\label{duniouter}
\begin{aligned}
  \hat \theta_{[s]} = & \sup_{V \in \mathcal{G}(s,d)}
  \lim_{n\to\infty} \frac{1}{n} \sup_{k\in\N_0}a_{k+1,k+n}(V), \\
  \underaccent{\hat}\theta_{[s]} = & \sup_{V \in \mathcal{G}(s,d)}
  \lim_{n\to\infty} \frac{1}{n} \inf_{k\in\N_0} a_{k+1,k+n}(V).
\end{aligned}
\end{equation*}
\end{enumerate}
\end{definition}
The uniform angular values defined in (iii) and (iv)
  distinguish between longtime dynamics starting at time zero and
  longtime dynamics starting at an arbitrary later time.
  In the stability theory for nonautonomous systems, such a distinction
  is quite common when passing 
  from  Lyapunov exponents to Bohl exponents, see \cite{Barabanov2017},
  \cite[Ch.III.4]{dk74}.
  One can show that the limits occurring in Definition \ref{defangularvalues}
  exist and that the various angular values are related by the diagram
  below (\cite[Lemma 3.3]{BeFrHu20})
  \begin{equation}\label{eq2.4}
    \begin{matrix}
    \underaccent{\hat} \theta_{[s]} & \le & \underaccent{\hat}
    \theta_s & \le & \hat \theta_s & \le & \hat \theta_{[s]}\\
    \rle& & \rle && \rle && \rle\\
    \underaccent{\bar} \theta_{[s]} & \le & \underaccent{\bar}
    \theta_s & \le & \bar \theta_s & \le & \bar \theta_{[s]}\\
    \end{matrix}
  \end{equation}
    Finally, we note that all inequalities in this diagram can be strict,
as suitable examples in \cite[Section 3.2]{BeFrHu20}  show. However,
    our numerical computations in Section \ref{sec5} suggest that such a
    distinction of angular values is rather exceptional.
  
  
\section{The dichotomy spectrum and reduction theorems}
\label{sec4}
We discuss in this section relations between angular values and the
dichotomy spectrum. This particularly results in a computational
approach for angular values. We start with a brief introduction,
cf.\ \cite{h17}, of the dichotomy spectrum which is also called 
the Sacker-Sell spectrum.

\subsection{Dichotomy spectrum and trace spaces}\label{SackerSell}
The dichotomy spectrum, see \cite{ss78} is based on the
notion of an exponential dichotomy, cf.\ \cite{he81, ak01, k94, co78,
  dk74, p30}. In the following we recall its general definition
for a
discrete interval $\I\subset \Z$ which is unbounded
above, and for a linear system  
\eqref{diffeq}  which is bounded invertible, i.e.\  there exists a $C>0$ such
that $\|A_n\|,\|A_n^{-1}\| \le C$ for all $n\in\I$.
\begin{definition}\label{edDef}
The difference equation \eqref{diffeq} has an \textbf{exponential
  dichotomy} (\textbf{ED} for short)
on $\I$, if there exist constants $K>0,\alpha_s,\alpha_u \in(0,1)$ and families of
projectors $P_n^s$, $P_n^u := I-P_n^s$, $n\in\I$ such that 
\begin{itemize}
\item [(i)] $P_{n}^{s,u} \Phi(n,m) = \Phi(n,m)P_m^{s,u}$ for all
  $n,m\in\I$.
\item[(ii)] For $n, m\in\I$,  $n\ge m$ the following estimates hold:
$$
\|\Phi(n,m) P_m^s\| \le K\alpha_s^{n-m},\quad 
\|\Phi(m,n) P_n^u\| \le K \alpha_u^{n-m}.
$$
\end{itemize}
The tuple $(K,\alpha_{s,u}, P_\I^{s,u}=(P_n^{s,u})_{n \in \I})$ is called the dichotomy data. 
\end{definition}

The dichotomy spectrum is constructed, using the scaled equation
\begin{equation}\label{scale}
u_{n+1} = \frac 1 \gamma A_n u_n,\quad n\in\I,
\end{equation}
which has the solution operator $\Phi_\gamma(n,m) =
   \gamma^{m-n} \Phi(n,m)$.
Spectrum and resolvent set are defined as follows: 
$$
\Sigma_\ED := \{\gamma > 0 : \eqref{scale} \text{ has no ED on } \I\},\qquad
 R_\ED:= \R^{> 0}\setminus \Sigma_\ED. 
$$
The Spectral Theorem \cite[Theorem 3.4]{as01} provides the
decomposition
$\Sigma_\ED = \bigcup_{i=1}^\ell \cI_i$ of the dichotomy spectrum into
$\ell \le d$ intervals
$$
\cI_i = [\sigma_i^-, \sigma_i^+], \ i = 1,\dots,\ell,\quad \text{where}\quad
0 < \sigma_{\ell}^- \le \sigma_{\ell}^+ < \dots < \sigma_1^- \le \sigma_1^+.
$$
The intervals $\cI_i$, $i=1,\dots,\ell$ are called spectral
intervals. 
Correspondingly, the resolvent set
is $R_\ED =
\bigcup_{i=1}^{\ell+1} R_i$ with resolvent intervals 
\begin{equation*} \label{resolventset}
R_1 = (\sigma_1^+,\infty),\quad R_i = (\sigma_i^+,\sigma_{i-1}^-),\
i=2,\dots,\ell+1 \text{ with }\sigma_{\ell+1}^+ = 0,
\end{equation*}
see Figure \ref{F01}.
In case $\sigma_i^- = \sigma_{i}^+$ for an
$i\in \{1,\dots,\ell\}$ the spectral interval $\mathcal{I}_i$
is an isolated point. 
\begin{figure}[hbt]
\begin{center}
\includegraphics[width=0.75\textwidth]{ 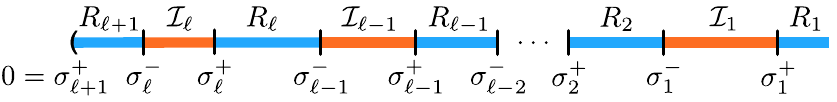}   
\end{center}
\caption{\label{F01}Illustration of spectral intervals (orange)  and of
  resolvent intervals (blue).}  
\end{figure}
For $\gamma \in R_i=(\sigma_i^+,\sigma_{i-1}^-)$
  with $i\in \{1,\dots,\ell+1\}$ the system \eqref{scale} has an ED
    with dichotomy data
\begin{equation}\label{const_alpha}
  \Big(K\, ,\alpha_s(\gamma) = \frac {\sigma_i^+}\gamma\, ,
  \alpha_u(\gamma) = \frac \gamma {\sigma_{i-1}^-}\, ,
  (P_{n,i}^s,P_{n,i}^u=I-P_{n,i}^s)_{n \in \I}   \Big) .
 \end{equation}
The projectors $P_{n,i}^{s,u}$ commute with $\Phi_{\gamma}(n,m)=\gamma^{m-n}\Phi(n,m)$ as in Definition \ref{edDef} (i) and satisfy
for $n\ge m$ the ED-estimates 
\begin{align*}
\|\Phi(n,m) P_{m,i}^s\|&= \gamma^{n-m} \|\Phi_\gamma(n,m) P_{m,i}^s\| \le K
  \gamma^{n-m}\left(\frac
  {\sigma_i^+}\gamma\right)^{n-m}=K(\sigma_i^+)^{n-m},\\
\|\Phi(m,n) P_{n,i}^u\|&= \gamma^{m-n} \|\Phi_\gamma(m,n)P_{n,i}^u\| \le K
  \gamma^{m-n}\left(\frac
  \gamma{\sigma_{i-1}^-}\right)^{n-m}=K\left(\sigma^-_{i-1}\right)^{m-n}.
\end{align*}
Note that these estimates do not depend on the particular choice of
$\gamma \in R_i$. Further,
the projectors are uniquely determined and   their ranges form
  a flag of subspaces for $n\in\I$, i.e.
\begin{equation} \label{projhierarchy}
  \begin{aligned}
\{0\}& = \range(P_{n,\ell+1}^s) \subseteq
        \range(P_{n,\ell}^s)\subseteq & \dots \subseteq \range(P_{n,1}^s) =
        \R^d,\\
\R^d& = \range(P_{n,\ell+1}^u) \supseteq
        \range(P_{n,\ell}^u)\supseteq &\dots \supseteq \range(P_{n,1}^u) =
        \{0\}.
  \end{aligned}
  \end{equation}

Spectral bundles that correspond to eigenspaces in autonomous systems
are defined as follows (see Figure \ref{F02}): 
\begin{equation}\label{specbun}
\begin{aligned}
\cW^i_n &:= \range (P_{n,i}^s) \cap \range (P_{n,i+1}^u),\quad i =
1,\dots, \ell.
\end{aligned}
\end{equation}
Note that the dimensions of these spectral
bundles $d_i:= \dim(\cW_n^i)$ for $i=1,\dots,\ell$ do not depend on
$n\in\I$.
Alternatively, we may write the ranges of dichotomy projectors in
terms of spectral bundles:
\begin{equation}\label{dichbundle}
  \range(P_{n,i}^u) = \bigoplus_{j=1}^{i-1} \cW_n^j,\quad
  \range(P_{n,i}^s) = \bigoplus_{j=i}^\ell \cW_n^j,
  \quad i = 1,\dots,\ell+1.
  \end{equation}
The fiber projector $\cP_{n,i}, i=1,\ldots,\ell$  onto $\cW_n^i$ along
$\bigoplus_{\nu=1,\nu\neq i}^{\ell} \cW_n^{\nu}$ is given by
\begin{equation} \label{fiberproj}
  \cP_{n,i}=P_{n,i}^s P_{n,i+1}^u=P_{n,i+1}^u P_{n,i}^s=P_{n,i}^s- P_{n,i+1}^s
  =P_{n,i+1}^u - P_{n,i}^u.
\end{equation}
Spectral bundles satisfy for $i=1,\dots,\ell$ and 
$n,m\in\I$ the invariance condition
\begin{equation}\label{invar}
\Phi(n,m)\cW_m^i = \cW_n^i.
\end{equation}

\begin{figure}[hbt]
\begin{center}
\includegraphics[width=0.8\textwidth]{ 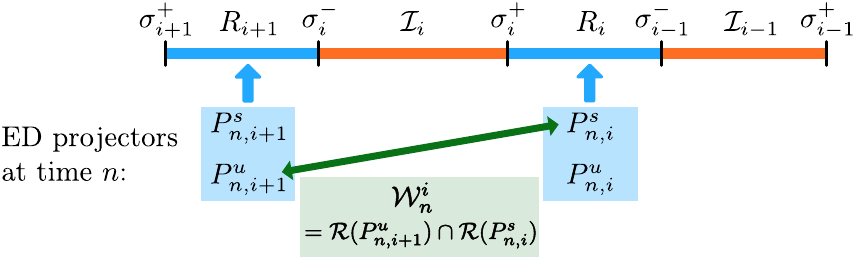}   
\end{center}
\caption{\label{F02}Construction of spectral bundles.}  
\end{figure}
For later purposes, we define the following subset of the Grassmannian
$\cG(s,d)$. 
\begin{definition}\label{deftrace}
Every element $V\in \cG(s,d)$ of the form
\begin{equation*} \label{tracespaceX}
  V = \bigoplus_{i=1}^{\ell}W_i: W_i \subseteq \cW_k^i \;
  \text{(subspace)}\; i=1,\ldots,\ell,\;
\sum_{i=1}^{\ell}\dim W_i=s
\end{equation*}
is called a \textbf{trace space} at time $k$. 
The set of all trace spaces is denoted by $\cD_k(s,d)$.
\end{definition}
Trace spaces depend on the spectral bundle in
  \eqref{specbun} and determine all angular values. Therefore, they
  form an essential part of the Grassmannian and we will study them in
  the following subsections.

\subsection{Outer angular values and spectral bundles}
\label{sec4.3}
In the remaining part of this section, we work in the setup $\I =
\N_0$. 
As a first step,  we consider a system with an ED and study the
  forward dynamics $\Phi(j,k)$ of a subspace $V \in \cG(s,d)$ starting at an arbitrary time
  $k \in \N$. Decomposing  $V=(\range(P^s_k)\cap V)\oplus \tilde{V} $ where
  $P_k^u \tilde{V}=P_k^uV$, we see that the stable part $\range(P^s_k)\cap V$  moves to $\Phi(j,k)(\range(P^s_k)\cap V)=\range(P^s_j)\cap \Phi(j,k)V$ because of invariance, while  
  the remaining part $\tilde{V}$ is driven towards the image
  $\Phi(j,k)P^u_k V=P^u_j \Phi(j,k)V$.
  Therefore, we expect the image of $V$ to approach $(\range(P^s_j)\cap \Phi(j,k)V)\oplus
  \Phi(j,k)P^u_k V$ as illustrated in Figure \ref{Th23}. This is made precise
  in the following theorem.

\begin{figure}[hbt]
    \begin{center}
      \includegraphics[width=0.95\textwidth]{ 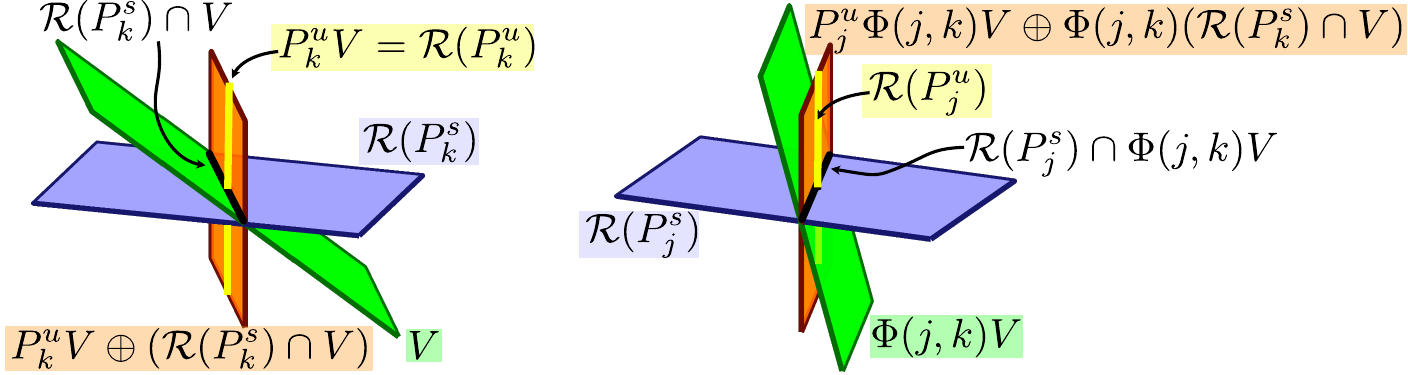}   
    \end{center}
\caption{\label{Th23} A
subspace $V$ (green) at time $k$ (left) is driven by the dynamics at time $j>k$
(right) towards the direct sum (orange plane) of its unstable
projection (yellow) and the intersection
with the stable subspace (black).}
\end{figure}

\begin{theorem} \label{Th1.1}
  Let the system \eqref{diffeq} have an exponential dichotomy on $\N_0$ with data
  $(K,\alpha_{s,u},P^{s,u}_{\N_0})$. For 
  $k \in \N_0$ and  $V \in \cG(s,d)$ let 
 \begin{equation} \label{Qkdef}
    Q_k^s:\R^d \to \range(P^s_k)\cap V
  \end{equation}
  denote the orthogonal projector onto $\range(P^s_k)\cap V$.
  Then the quantity (recall \eqref{ABbounded}) 
    \begin{align*}
   \rho^s_k(V)=  \inf\{C>0:
    \|P^s_k(I-Q_k^s)v  \| \le C \|P^u_k(I-Q_k^s) v \| \; \forall v \in V \} < \infty
    \end{align*}
is finite and 
  there exists an index $j^s_k=j^s_k(V)$ such that
  \begin{equation} \label{indexlarge}
    K^2 (\alpha_s \alpha_u)^{j^s_k} \rho^s_k(V)  \le \frac{1}{2}.
  \end{equation}
  For all $j \ge k+ j^s_k$ the following estimate holds
    \begin{equation} \label{estanguns}
    \begin{aligned}
     \ang( \Phi(j,k)V,\Phi(j,k)(P_k^u V \oplus Q_k^s V))
    & \le \frac{2}{\sqrt{3}}K^2
    (\alpha_s \alpha_u)^{j-k} \rho^s_k(V).
    \end{aligned}
  \end{equation}
  \end{theorem}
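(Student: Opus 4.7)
My plan is threefold: first show $\rho_k^s(V)<\infty$, then construct an explicit linear map from $\Phi(j,k)V$ onto $W := \Phi(j,k)(P_k^u V \oplus Q_k^s V)$ and identify its error, and finally invoke Lemma \ref{lem0}(ii) using the dichotomy estimates.

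For finiteness I would use the criterion from Remark \ref{relbounded}. Since $V$ is finite dimensional, $\rho_k^s(V)<\infty$ is equivalent to $\kernel(P_k^u(I-Q_k^s)|_V)\subseteq \kernel(P_k^s(I-Q_k^s)|_V)$. For $v\in V$ with $P_k^u(I-Q_k^s)v=0$, the vector $(I-Q_k^s)v$ lies in $V\cap\kernel(P_k^u)=V\cap\range(P_k^s)=\range(Q_k^s)$; combined with $(I-Q_k^s)v\in\kernel(Q_k^s)$ this forces $(I-Q_k^s)v=0$, whence $P_k^s(I-Q_k^s)v=0$ as required.

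Next, Lemma \ref{linalg} with $P=P_k^s$ and $Q=Q_k^s$ shows that $L:=P_k^u+Q_k^s$ is a bijection $V\to P_k^u V\oplus Q_k^s V$. Extending $Q_k^s$ to $\R^d$ as the orthogonal projector onto $V\cap\range(P_k^s)$ turns $L$ into a matrix, and $\Pi:=\Phi(j,k)L\Phi(k,j)\in\R^{d,d}$ then satisfies $\Pi\Phi(j,k)v=\Phi(j,k)(P_k^u v+Q_k^s v)=:w(v)$ for every $v\in V$, so that $\Pi\Phi(j,k)V=W$ and
\begin{equation*}
  (I-\Pi)\Phi(j,k)v \;=\; \Phi(j,k)\bigl(P_k^s v-Q_k^s v\bigr) \;=\; \Phi(j,k)P_k^s(I-Q_k^s)v,
\end{equation*}
using $Q_k^s v\in\range(P_k^s)$. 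Lemma \ref{lem0}(ii) applied to $\Phi(j,k)V$ and $\Pi$ therefore reduces the theorem to a uniform bound on the ratio $q_v:=\|\Phi(j,k)P_k^s(I-Q_k^s)v\|/\|w(v)\|$.

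The stable dichotomy estimate $\|\Phi(j,k)P_k^s\|\le K\alpha_s^{j-k}$ together with the definition of $\rho_k^s(V)$ yields $\|\Phi(j,k)P_k^s(I-Q_k^s)v\|\le K\alpha_s^{j-k}\rho_k^s(V)\,\|P_k^u v\|$, using $P_k^u(I-Q_k^s)=P_k^u$ on $V$. For the denominator, the decomposition $w(v)=\Phi(j,k)Q_k^s v+\Phi(j,k)P_k^u v$ gives $P_j^u w(v)=\Phi(j,k)P_k^u v$, so the invariance identity $P_k^u v=\Phi(k,j)P_j^u\Phi(j,k)P_k^u v$ combined with $\|\Phi(k,j)P_j^u\|\le K\alpha_u^{j-k}$ and the uniform bound $\|P_j^u\|\le K$ delivers a lower bound of the form $\|w(v)\|\ge c\,\alpha_u^{-(j-k)}\|P_k^u v\|$. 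Hence $q_v$ is bounded by a constant multiple of $(\alpha_s\alpha_u)^{j-k}\rho_k^s(V)$, and the index threshold \eqref{indexlarge} forces $q_v\le 1/2$ once $j-k\ge j_k^s$. Lemma \ref{lem0}(ii) then gives $\ang(\Phi(j,k)V,W)\le q_v/\sqrt{1-q_v^2}\le (2/\sqrt{3})q_v$, delivering \eqref{estanguns}. The main obstacle is the constant bookkeeping needed to recover the precise prefactor $K^2$ in \eqref{estanguns}: the lower bound on $\|w(v)\|$ must carefully isolate the unstable component via $P_j^u$ and exploit the uniform boundedness of the dichotomy projectors as tightly as possible.
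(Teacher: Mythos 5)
Your proposal is correct and follows essentially the same route as the paper: finiteness via the kernel-inclusion criterion (which is exactly the content of Lemma \ref{linalg} and Remark \ref{relbounded}), the same conjugated projector $\Pi=\Phi(j,k)(P_k^u+Q_k^s)\Phi(k,j)$ (the paper's $\tilde P=P_j^u+P_j^s\Phi(j,k)Q_k^s\Phi(k,j)$ is the same matrix, since $Q_k^s=P_k^s Q_k^s$), the identical splitting of $(I-\Pi)\Phi(j,k)v$, and the same dichotomy estimates feeding into Lemma \ref{lem0}(ii). The only blemish is the extraneous invocation of $\|P_j^u\|\le K$: the identity $P_k^u v=\Phi(k,j)P_j^u\,w(v)$ together with $\|\Phi(k,j)P_j^u\|\le K\alpha_u^{j-k}$ already gives $\|w(v)\|\ge K^{-1}\alpha_u^{-(j-k)}\|P_k^u v\|$, and dropping the redundant bound is precisely what recovers the clean $K^2$ prefactor.
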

\begin{proof}
  Applying Lemma \ref{linalg} to $P=P^s_k$ and $Q=Q^s_k$ shows
    that the quantity $\rho^s_k(V)$ is finite.
   Since $\alpha_s \alpha_u<1$ there exists an index 
   $j^s_k$ satisfying \eqref{indexlarge}.
       Our goal is to apply Lemma \ref{lem0} (ii) for $j \ge k$
    to the $s$-dimensional subspace $\tilde{V}=\Phi(j,k)V$  and the matrix
    $\tilde{P}= P_j^u + P_j^s\Phi(j,k)Q_k^s \Phi(k,j)$. First note that
    Lemma \ref{linalg} and the properties of the solution operator imply
    \begin{align*}
      \Phi(j,k)(P_k^uV \oplus Q_k^sV)& 
      = \Phi(j,k)(P_k^u+Q_k^s) \Phi(k,j)\Phi(j,k)V\\
      &=(P_j^u+ \Phi(j,k)Q_k^s\Phi(k,j))\Phi(j,k)V 
      =\tilde{P}\tilde{V}.
    \end{align*}
   The exponential dichotomy yields for all $v \in V$ and $j \ge k$
  \begin{align*}
    \|P_k^u v \| & = \|\Phi(k,j)P_j^u \Phi(j,k)(P_k^u v + Q_k^s v) \| \\
    & \le K \alpha_u^{j-k} \|\Phi(j,k)(P_k^u v + Q_k^s v) \|= K \alpha_u^{j-k}
    \|\tilde{P}\Phi(j,k) v \|, \\
    \|(I- \tilde{P})\Phi(j,k)v\| & = \| \Phi(j,k)v - (\Phi(j,k)P_k^uv +
    \Phi(j,k) P_k^s Q_k^s v) \| \\
    & = \| \Phi(j,k)P_k^s(I- Q_k^s) v\| \le K \alpha_s^{j-k}\|P_k^s(I-Q_k^s)v\|.
  \end{align*}
  Combining these estimates we obtain
  \begin{equation} \label{powerest}
  \begin{aligned}
    \|(I- \tilde{P})\Phi(j,k)v \| & \le K \alpha_s^{j-k} \rho^s_k(V)
    \| P_k^u(I-Q_k^s)v \|
    =K \alpha_s^{j-k} \rho^s_k(V) \| P_k^u v \|\\
    &\le K^2 (\alpha_s \alpha_u)^{j-k}
    \rho^s_k(V) \|\tilde{P} \Phi(j,k)v \|.
  \end{aligned}
  \end{equation}
   By condition \eqref{indexlarge} we can apply
   Lemma \ref{lem0} (ii) with
   $q=K^2 (\alpha_s \alpha_u)^{j-k} \rho^s_k(V) \le \frac{1}{2}$ for
   $j \ge k+ j^s_k$ and obtain
  \begin{align*} \label{estVtoPV}
    \ang(\Phi(j,k)V, \Phi(j,k)(P_k^uV\oplus Q_k^s V)) \le
    \frac{2K^2}{\sqrt{3}}(\alpha_s 
    \alpha_u)^{j-k} \rho^s_k(V).
  \end{align*}
   This proves the estimate \eqref{estanguns}.
  \end{proof}

In a similar way, we can reverse time and analyze the
  backwards dynamics of $V$. For this purpose, let
 \begin{equation} \label{Qkudef}
    Q_k^u: \R^d \to \range(P^u_k)\cap V
  \end{equation}
  denote the orthogonal projector onto $\range(P^u_k)\cap V$.
  Then 
    \begin{align*}
   \rho^u_k(V)=  \inf\{ C>0:
    \|P^u_k(I-Q_k^u)v  \| \le C \|P^s_k(I-Q_k^u) v \| \; \forall v \in
      V\} < \infty 
    \end{align*}
holds and 
  there exists an index $j^u_k=j^u_k(V)$ such that
  \begin{equation} \label{indexularge}
    K^2 (\alpha_s \alpha_u)^{j^u_k} \rho^u_k(V)  \le \frac{1}{2}.
  \end{equation}
  Then we obtain from Lemma \ref{lem0} (ii) the following estimate for all $j \le k- j^u_k$, 
    \begin{equation} \label{estangstab}
    \begin{aligned}
     \ang( \Phi(j,k)V,\Phi(j,k)(P_k^s V \oplus Q_k^u V))
    & \le \frac{2}{\sqrt{3}}K^2
    (\alpha_s \alpha_u)^{k-j} \rho^u_k(V).
    \end{aligned}
  \end{equation}
  Theorem \ref{Th1.1} provides the building block for reducing the analysis of angular
values for general subspaces $ V \in \cG(s,d)$ to specific ones which
have a basis consisting of vectors from the spectral bundle \eqref{specbun}.
For a fixed starting time $k\in\N$ and $i=1,\ldots,\ell$ we recall the
projectors $\cP_{k,i} =  P_{k,i+1}^u P_{k,i}^s: \R^d \to \cW_k^i$ 
 from \eqref{fiberproj} and define the orthogonal projectors
(cf.\ \eqref{projhierarchy},  \eqref{dichbundle}, \eqref{Qkdef})
\begin{equation} \label{Qkidef}
   \begin{aligned}
  Q_{k,i}^s: \R^d \to \range(P_{k,i}^s)\cap V, \quad i=1,\ldots,\ell.
     \end{aligned}
\end{equation}
With each $V \in \cG(s,d)$ we associate a subspace with a
fiber basis and defined by
\begin{equation} \label{tracedef}
  \cT_k(V) = \bigoplus_{i=1}^{\ell} (\cP_{k,i} Q_{k,i}^s V)=
  \bigoplus_{i=1}^{\ell} (P_{k,i+1}^u(\range(P_{k,i}^s)\cap V) ).
\end{equation}
Below we will show $\dim \cT_k(V)=s$ and the equality
\begin{align} \label{tracerep}
  \cT_k(V) = \big(\sum_{i=1}^{\ell} \cP_{k,i} Q_{k,i}^s\big) V.
\end{align}
Therefore, $\cT_k(V)$ is a trace space at time $k$ in the sense of
Definition \ref{deftrace}. In 
fact, we have the equality 
\begin{equation} \label{tracespace}
 \cD_k(s,d)=\{\cT_k(V):V \in \cG(s,d)\},
\end{equation}
since every
 $V=\bigoplus_{i=1}^{\ell}W_i\in \cD_k(s,d)$ with $W_i \subseteq \cW_k^i$ satisfies
$\cP_{k,i}Q_{k,i}^s V = W_i$, $i=1,\ldots,\ell$.
 Hence every trace space can be found by subsequent projection as in 
  \eqref{tracerep}.

Our main reduction theorem is the following.
\begin{theorem} \label{Th2}
  Assume that the difference equation \eqref{diffeq} has the dichotomy spectrum
  $ \Sigma_\ED=\bigcup_{i=1}^{\ell}[\sigma_i^-,\sigma_i^+]$ with
  fibers $\cW_k^i,i=1\ldots,\ell$ and projectors 
  $\cP_{k,i}, i=1,\ldots,\ell$, $k \in \N_0$. Then for every $k \in \N_0$   and
  $V \in \cG(s,d)$, $s=1,\ldots,d$
  there exists an index $\overline{j}=\overline{j}(k,V)$ and a constant
  $C=C(k,V)$ such
  that for all $j \ge k+\overline{j}$ the following estimate holds
  \begin{equation} \label{angletotrace}
    \ang(\Phi(j,k)V, \Phi(j,k)\cT_k(V)) \le C(k,V) \Big( \max_{i=1,\ldots,\ell-1}
    \frac{\sigma_{i+1}^+}{\sigma_i^-}\Big)^{j-k}.
  \end{equation}
\end{theorem}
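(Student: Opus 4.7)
The plan is to reduce $V$ to $\cT_k(V)$ by peeling off one spectral fiber at a time across the $\ell-1$ gaps of the dichotomy spectrum, and to combine the resulting approximation errors via the triangle inequality for the metric $\ang$ on $\cG(s,d)$ (Proposition \ref{prop2g}).

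As a one-gap ingredient I would apply Theorem \ref{Th1.1} to the scaled equation \eqref{scale} for any $\gamma\in R_{i+1}=(\sigma_{i+1}^+,\sigma_i^-)$. By \eqref{const_alpha} this scaled system is exponentially dichotomic with projectors $P_{k,i+1}^{s,u}$ and with $\alpha_s(\gamma)\alpha_u(\gamma)=\sigma_{i+1}^+/\sigma_i^-<1$. Since $\Phi_\gamma(j,k)W=\Phi(j,k)W$ as subspaces for any $W$ (the two operators differ only by the nonzero scalar $\gamma^{-(j-k)}$), Theorem \ref{Th1.1} applied to \eqref{scale} gives, for any $s$-dimensional subspace $U\subseteq\R^d$ and all sufficiently large $j-k$,
\begin{equation*}
  \ang\bigl(\Phi(j,k)U,\Phi(j,k)(P_{k,i+1}^u U\oplus(\range(P_{k,i+1}^s)\cap U))\bigr)\le C\Bigl(\frac{\sigma_{i+1}^+}{\sigma_i^-}\Bigr)^{j-k},
\end{equation*}
where $C$ depends on $U$ through the quantity $\rho_{k,i+1}^s(U)$ of Theorem \ref{Th1.1}, finite by Lemma \ref{linalg}.

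Starting from $V^{(0)}:=V$ I would then build inductively
\begin{equation*}
  V^{(i)}:=P_{k,i+1}^u V^{(i-1)}\oplus\bigl(\range(P_{k,i+1}^s)\cap V^{(i-1)}\bigr),\qquad i=1,\ldots,\ell-1,
\end{equation*}
each of dimension $s$ by Lemma \ref{linalg}. Using the hierarchy \eqref{projhierarchy}, the identity $P_{k,i+1}^u=P_{k,i}^u+\cP_{k,i}$ from \eqref{fiberproj}, and $Q_{k,1}^s V=V$, an induction on $i$ would yield
\begin{equation*}
  V^{(i)}=\bigoplus_{m=1}^{i}\cP_{k,m}Q_{k,m}^s V\;\oplus\;Q_{k,i+1}^s V,
\end{equation*}
and in particular the crucial identity $\range(P_{k,i+1}^s)\cap V^{(i-1)}=Q_{k,i+1}^s V$. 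Since $\range(P_{k,\ell}^s)=\cW_k^\ell$ and $\cP_{k,\ell}$ is the identity on $\cW_k^\ell$, the last summand at $i=\ell-1$ satisfies $Q_{k,\ell}^s V=\cP_{k,\ell}Q_{k,\ell}^s V$, so $V^{(\ell-1)}=\cT_k(V)$; as a byproduct this also confirms $\dim\cT_k(V)=s$ and the representations \eqref{tracerep}--\eqref{tracespace}.

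Finally I would apply the one-gap estimate with $U=V^{(i-1)}$ for $i=1,\ldots,\ell-1$ and invoke the triangle inequality for $\ang$ on $\cG(s,d)$ along the chain $V^{(0)},\ldots,V^{(\ell-1)}=\cT_k(V)$ to obtain
\begin{equation*}
  \ang(\Phi(j,k)V,\Phi(j,k)\cT_k(V))\le\sum_{i=1}^{\ell-1}C_i\Bigl(\frac{\sigma_{i+1}^+}{\sigma_i^-}\Bigr)^{j-k}\le C(k,V)\Bigl(\max_{i=1,\ldots,\ell-1}\frac{\sigma_{i+1}^+}{\sigma_i^-}\Bigr)^{j-k}
\end{equation*}
for all $j\ge k+\overline{j}(k,V)$, where $\overline{j}(k,V)$ collects the thresholds of the individual invocations of Theorem \ref{Th1.1}. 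The main obstacle is the combinatorial bookkeeping in the inductive step that identifies $V^{(\ell-1)}$ with $\cT_k(V)$; once this identification is secured, the analytic estimate is an immediate consequence of Theorem \ref{Th1.1} and the triangle inequality.
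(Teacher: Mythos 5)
Your proposal matches the paper's proof in both structure and substance: the same peeling recursion (your $V^{(0)},\ldots,V^{(\ell-1)}$ is the paper's $V_1,\ldots,V_\ell=V_{\ell+1}$ with shifted indexing), the same one-gap application of Theorem \ref{Th1.1} to the $\gamma$-scaled system with $\gamma\in R_{i+1}$ yielding rate $\sigma_{i+1}^+/\sigma_i^-$, and the same triangle-inequality aggregation over the chain. The ``combinatorial bookkeeping'' you defer is exactly where the paper spends its effort — in particular, showing that the range of the summed operator $\sum_{\nu\le i}\cP_{k,\nu}Q^s_{k,\nu}$ restricted to $V$ actually equals the direct sum $\bigoplus_{\nu\le i}\cP_{k,\nu}Q^s_{k,\nu}V$ requires the paper's explicit $\tilde v=v+Q^s_{k,i+1}(w-v)$ construction, so that identity is not entirely automatic — but the approach is identical.
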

\begin{remark} Note that several of the spaces $\cP_{k,i}Q_{k,i}^s V$
    occurring in the decomposition \eqref{tracedef} may be
    trivial. The following proof will show that one can then omit  
    the corresponding quotients $\frac{\sigma_{i+1}^+}{\sigma_i^-}$
    from the maximum in \eqref{angletotrace}. Moreover, the proof will provide
    values for the index $\overline{j}(k,V)$ and the constant $C(k,V)$
     in terms of $V$ and the dichotomy data in the
      resolvent intervals.
\end{remark}
\begin{proof} The main work is to set up inductive steps
    which apply Theorem \ref{Th1.1} to $\Phi_{\gamma}$ for  values
    of $\gamma$ in successive resolvent intervals. \\
  \noindent {\bf Step1:} Let us first discuss a recursive construction that
  leads to the trace space \eqref{tracerep}.
  With every $V \in \cG(s,d)$ we associate
  subspaces $V_i \in \cG(s,d)$ and further projectors
  $\tilde{Q}_{k,i}(i=1,\ldots,\ell+1)$  defined by $V_1 = V$,
  $\tilde{Q}_{k,1} = I_d$ and then 
  for $i=1,\dots,\ell$ as follows
  \begin{equation} \label{defnewproj}
    \begin{aligned}
     & \tilde{Q}_{k,i+1}:\R^d  \to \range(P_{k,i+1}^s) \cap V_i \quad
     \text{orthogonal projector}, \\ 
   &   V_{i+1}= P_{k,i+1}^uV_i \oplus \tilde{Q}_{k,i+1} V_i.
    \end{aligned}
  \end{equation}
  Figure \ref{Th24} illustrates this recursion for two characteristic cases.
  The initial space $V=V_1$ is successively replaced by spaces
    $V_2,V_3, \ldots,V_{\ell+1}=:\cT_k(V)$ of the same dimension by working
    down the flag of subspaces in \eqref{projhierarchy}. 
    In each step the intersection $\range(P_{k,i+1}^s) \cap V_i$
    with the current stable space is kept while the remaining part is
    replaced by its current unstable projection $P_{k,i+1}^uV_i$.
\begin{figure}[hbt]
    \begin{center}
      \includegraphics[width=0.9\textwidth]{ 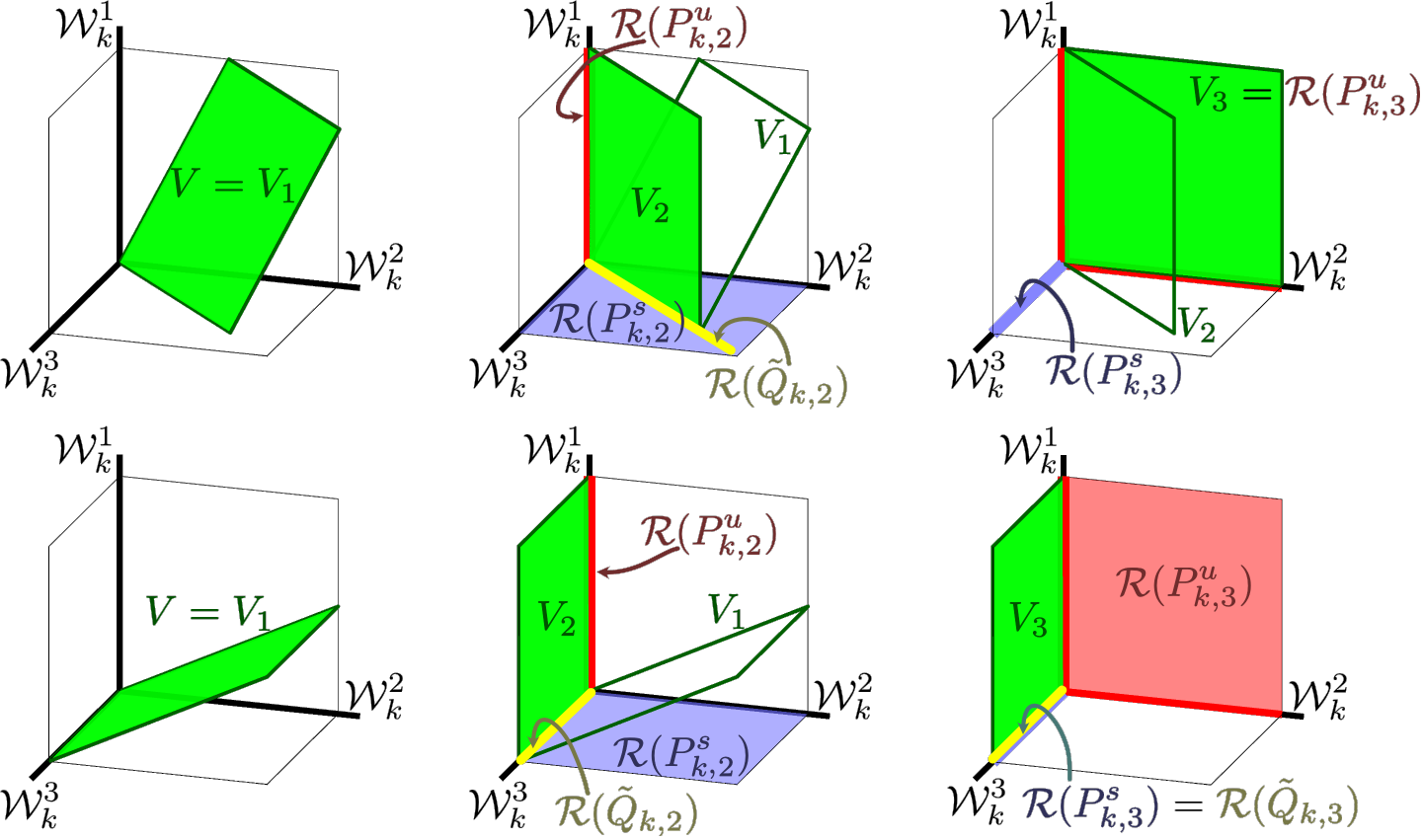}   
    \end{center}
\caption{Recursive construction of subspaces, cf.\ \eqref{defnewproj}
  for two characteristic cases. Upper row: Since $V$ intersects
  $\cW_k^i$ only in $\{0\}$ for any $i\in\{1,2,3\}$, the sequence of
  subspaces $(V_i)_{i\le 4}$ is 
  constant for $i\ge 3$, i.e.\ $V_2 \neq V_3 = V_4$. Lower row:
  $V$ has a nontrivial intersection with $\cW_k^3$ and the sequence of
  subspaces $(V_i)_{i\le 4}$ is constant for 
  $i \ge 2$. In both cases, the trace space of $V$ is given as
  $\cT_k(V) = V_3$.\label{Th24} }
\end{figure}

  Note that Lemma \ref{linalg} implies
  $V_{i+1}=(P_{k,i+1}^u+\tilde{Q}_{k,i+1}) V_i$ and $\dim V_i = \dim V_{i+1}$.
  In addition,  we claim for $i=1,\ldots,\ell$
  \begin{equation} \label{relproj}
    P_{k,i+1}^u V_i = \big(\sum_{\nu=1}^i \cP_{k,\nu} Q_{k,\nu}^s \big) V, \quad
    \tilde{Q}_{k,i+1}V_i = Q_{k,i+1}^s V.
  \end{equation}
  We proceed by induction.
  First note that \eqref{projhierarchy}, \eqref{defnewproj} and \eqref{Qkidef} imply
  $P_{k,1}^s = I_d$,  $Q_{k,1}V = \tilde{Q}_{k,1}V_1$. Further, we have
  by \eqref{fiberproj}, \eqref{defnewproj}, \eqref{Qkidef}
  \begin{align*}
    P_{k,2}^uV_1 = \cP_{k,1} V, \quad \tilde{Q}_{k,2}V_1 = \range(P_{k,2}^s) \cap V
    = Q_{k,2}^s V.
    \end{align*}
  Assume that \eqref{relproj} holds for the index $i$. Then we obtain from
  \eqref{defnewproj}, \eqref{dichbundle}
  \begin{align*}
    \tilde{Q}_{k,i+2}V_{i+1} &=\range(P_{k,i+2}^s) \cap V_{i+1} =
    \range(P_{k,i+2}^s) \cap (P_{k,i+1}^u V_{i} \oplus \tilde{Q}_{k,i+1} V_i) \\
    & = \range(P_{k,i+2}^s) \cap \tilde{Q}_{k,i+1} V_i
     = \range(P_{k,i+2}^s) \cap (\range(P_{k,i+1}^s) \cap V)\\ 
       &  =\range(P_{k,i+2}^s)  \cap V = Q_{k,i+2}^s V.
  \end{align*}
  Furthermore,
  \begin{align*}
    P_{k,i+2}^u V_{i+1} & = P_{k,i+2}^u( P_{k,i+1}^uV_i \oplus \tilde{Q}_{k,i+1}V_i)\\
    & = P_{k,i+2}^u \Big( \big(\sum_{\nu=1}^i \cP_{k,\nu} Q_{k,\nu}^s\big) V \oplus
    Q_{k,i+1}^sV\Big) \\
    & = \big(\sum_{\nu=1}^i P_{k,i+2}^u\cP_{k,\nu} Q_{k,\nu}^s\big) V +
    P_{k,i+2}^u P_{k,i+1}^s Q_{k,i+1}^s V.
  \end{align*}
  From \eqref{projhierarchy}, \eqref{dichbundle} and \eqref{fiberproj} we have the equalities
  $P_{k,i+2}^u\cP_{k,\nu}=\cP_{k,\nu}$ for $\nu \le i$ and\linebreak
  $P_{k,i+2}^u P_{k,i+1}^s= \cP_{k,i+1}$. With
  $\range(\cP_{k,i+1}) \cap \bigoplus_{\nu=1}^i\cW_k^{\nu}=\{0\}$ this leads to
  \begin{align*}
    P_{k,i+2}^u V_{i+1} & =
    \big(\sum_{\nu=1}^i \cP_{k,\nu} Q_{k,\nu}^s\big) V \oplus
    \cP_{k,i+1} Q_{k,i+1}^s V
    = \big(\sum_{\nu=1}^{i+1} \cP_{k,\nu} Q_{k,\nu}^s\big) V.
  \end{align*}
  The last equality needs an argument. The relation ``$\supseteq$'' is obvious.
  For the converse we consider $v,w\in V$ and construct $\tilde{v}\in V$
  such that
  \begin{equation} \label{eqsum}
    \sum_{\nu=1}^i \cP_{k,\nu} Q_{k,\nu}^s v + \cP_{k,i+1}Q_{k,i+1}^s w=
    \sum_{\nu=1}^{i+1} \cP_{k,\nu} Q_{k,\nu}^s \tilde{v}.
  \end{equation}
  For this purpose set $\tilde{v}=v + Q_{k,i+1}^s(w-v)$ and verify \eqref{eqsum}
  by using the equality
  $\cP_{k,\nu}Q_{k,\nu}^s Q_{k,i+1}^s(w-v)=\cP_{k,\nu} Q_{k,i+1}^s(w-v)=
  \cP_{k,\nu} P_{k,i+1}^s Q_{k,i+1}^s(w-v)=0$ for $\nu \le i$.
 In this way one also obtains the equality of the representations
 \eqref{tracedef} and 
  \eqref{tracerep} via an induction w.r.t.\ the index $i$.

  \noindent {\bf Step2:} We prove the key estimate \eqref{angletotrace}. 
 Let us apply Theorem \ref{Th1.1} for $i=1,\ldots,\ell$ to the scaled operator
  $\Phi_{\gamma}$ with $\gamma \in  R_{i+1}=(\sigma_{i+1}^+,\sigma_i^-)$
   and $V_i \in \cG(s,d)$, $\tilde{Q}_{k,i}$ as defined by
   \eqref{defnewproj} (recall \eqref{const_alpha} and
   $\sigma_{\ell+1}^+=0$). 
  The index $j^s_{k,i}$ is determined by  $2 K^2
  \big(\frac{\sigma_{i+1}^+}{\sigma_i^-}\big)^{j^s_{k,i}} \rho_{k,i}^s(V) \le
  1$ (cf.\ \eqref{indexlarge}) where, 
  due to the second equation in \eqref{relproj},
  \begin{align*}
    \rho^s_{k,i}(V)=  \inf\{C>0:
    \|P^s_{k,i+1}(I-Q_{k,i+1}^s)v  \| \le C
    \|P^u_{k,i+1}(I-Q_{k,i+1}^s) v \| \; \forall v \in V \}. 
  \end{align*}
  Then the estimate \eqref{estanguns} leads for $j \ge k +j_{k,i}^s$ and
  $i=1\ldots,\ell$ to
  \begin{equation*} \label{estVi}
     \ang( \Phi_{\gamma}(j,k)V_i,\Phi_{\gamma}(j,k)V_{i+1}))
    \le \frac{2}{\sqrt{3}}K^2
   \left(\frac{\sigma_{i+1}^+}{\sigma_i^-}\right)^{j-k} \rho^s_{k,i}(V).
  \end{equation*}
  Since angles do not depend on scalings we can replace $\Phi_{\gamma}$
  by $\Phi$ in this estimate.
  Finally, observe $P_{k,\ell+1}^s=0$, $Q_{k,\ell+1}^s=0$ and thus
  $V_{\ell+1}= P_{k,\ell+1}^u V_{\ell}=V_{\ell}=\cT_k(V)$ due to \eqref{relproj}.
  The triangle inequality then yields 
  for $j-k \ge \overline{j}= \max_{i=1,\ldots,\ell} j_{k,i}^s$
  \begin{align*}
    \ang(\Phi(j,k)V,\Phi(j,k)\cT_k(V)) & \le \sum_{i=1}^{\ell}
    \ang(\Phi(j,k)V_i,\Phi(j,k)V_{i+1}) \\
    & \le \frac{2K^2}{\sqrt{3}}\left(\max_{i=1,\ldots,\ell-1}\frac{\sigma_{i+1}^+}
        {\sigma_i^-}\right)^{j-k} \sum_{i=1}^{\ell} \rho_{k,i}^s(V).
    \end{align*}
\end{proof}

Some conclusions of Theorem \ref{Th2} are summarized in Theorem \ref{cor1}
below. 
In particular, we present an important characterization of outer angular
values $\underaccent{\hat}\theta_1$, $\hat \theta_1$ if
all spectral bundles are one-dimensional.

\begin{theorem}\label{cor1}
  Let the assumptions of Theorem \ref{Th2} hold and define the quantities
  (see \eqref{defsums})
  \begin{equation*} \label{defsums1}
    a_{1,n}(V) = \sum_{j=1}^{n} \ang(\Phi(j-1,0)V,\Phi(j,0)V) \quad 
   n \in \N,\; V\in \mathcal{G}(s,d).
  \end{equation*}
  Then the following holds for all $V\in \cG(s,d)$
\begin{equation}\label{obs1}
\begin{aligned}
  &\limsup_{n\to \infty}
  \frac 1n a_{1,n}(V)
= \limsup_{n\to \infty}
\frac 1n a_{1,n}(\cT_0(V)),
\end{aligned}
\end{equation}
and similarly with $\liminf$ instead of $\limsup$.
The outer angular values satisfy
\begin{equation} \label{outerreduction}
  \begin{aligned}
  \hat \theta_{s} = \sup_{V \in \cD_0(s,d)}
  \limsup_{n\to\infty} \frac{1}{n}  a_{1,n}(V), \quad
    \underaccent{\hat}\theta_s =\sup_{V \in \cD_0(s,d)}
  \liminf_{n\to\infty} \frac{1}{n}  a_{1,n}(V).
\end{aligned}
    \end{equation}
If $\dim(\cW_m^i)=1$ for all $i=1,\dots,d$ and $m \in \N_0$, then the
first lower and upper outer angular values have the form
\begin{equation}\label{obs2}
\begin{aligned}
\underaccent{\hat} \theta_1 & = \max_{i=1,\dots,d}\liminf_{n\to \infty} \frac 1n
\sum_{j=1}^{n} \ang(\cW_{j-1}^i,\cW_j^i),\\
\hat \theta_1 & = \max_{i=1,\dots,d}\limsup_{n\to \infty} \frac 1n
\sum_{j=1}^{n} \ang(\cW_{j-1}^i,\cW_j^i).
\end{aligned}
\end{equation}
\end{theorem}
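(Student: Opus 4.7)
The plan is to invoke Theorem \ref{Th2} at $k=0$ to replace $V$ by its trace space $\cT_0(V)$ inside each summand of $a_{1,n}$, then deduce \eqref{outerreduction} from the identification \eqref{tracespace} and \eqref{obs2} from the invariance \eqref{invar} of spectral bundles.

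For \eqref{obs1} I would proceed as follows. Theorem \ref{Th2} provides $\overline{j}=\overline{j}(0,V)$, $C=C(0,V)>0$, and
\[
q := \max_{i=1,\ldots,\ell-1}\frac{\sigma_{i+1}^+}{\sigma_i^-} \in (0,1)
\]
(the inequality $q<1$ comes from the ordering $\sigma_{i+1}^+<\sigma_i^-$ in the spectral theorem) such that for all $j \ge \overline{j}$
\[
\epsilon_j := \ang(\Phi(j,0)V,\Phi(j,0)\cT_0(V)) \le Cq^{j},
\]
while for $j<\overline{j}$ the trivial bound $\epsilon_j \le \pi/2$ suffices. By Proposition \ref{prop2g}, $\ang$ is a metric on $\cG(s,d)$, so the triangle inequality applied at the two endpoints yields
\[
\bigl|\ang(\Phi(j-1,0)V,\Phi(j,0)V)-\ang(\Phi(j-1,0)\cT_0(V),\Phi(j,0)\cT_0(V))\bigr| \le \epsilon_{j-1}+\epsilon_j.
\]
Summing from $j=1$ to $n$ and bounding by $2\sum_{j\ge 0}\epsilon_j =: M(V)$ (which is finite since geometric tails combine with finitely many bounded terms) gives $|a_{1,n}(V)-a_{1,n}(\cT_0(V))| \le M(V)$, uniformly in $n$. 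Dividing by $n$ and passing to $\limsup$ (resp.\ $\liminf$) establishes \eqref{obs1}.

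The identity \eqref{outerreduction} is then immediate: by \eqref{obs1} the supremum over $V \in \cG(s,d)$ of $\limsup \frac{1}{n} a_{1,n}(V)$ equals the supremum of $\limsup \frac{1}{n} a_{1,n}(\cT_0(V))$, and as $V$ ranges over $\cG(s,d)$, $\cT_0(V)$ ranges exactly over $\cD_0(s,d)$ by \eqref{tracespace}; the same argument works with $\liminf$. For \eqref{obs2}, when every $\cW_m^i$ is one-dimensional, Definition \ref{deftrace} forces $\cD_0(1,d)=\{\cW_0^1,\ldots,\cW_0^d\}$ (exactly one summand of dimension one, all others trivial). The invariance \eqref{invar} then yields $\Phi(j,0)\cW_0^i=\cW_j^i$, so $a_{1,n}(\cW_0^i) = \sum_{j=1}^n \ang(\cW_{j-1}^i,\cW_j^i)$. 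Substituting into \eqref{outerreduction} for $s=1$ turns the supremum over a finite set into a maximum over $i=1,\ldots,d$, producing \eqref{obs2}.

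The main obstacle is the telescoping step in the first part: Theorem \ref{Th2} controls $\ang(\Phi(j,0)V,\Phi(j,0)\cT_0(V))$ only at a single time $j$, whereas $a_{1,n}$ compares two consecutive times. It is crucial that $\ang$ is a \emph{genuine} metric on the Grassmannian (not merely a topology-equivalent pseudo-distance), as supplied by Proposition \ref{prop2g}; this allows the triangle inequality to convert the pointwise bound into a summable error, and the geometric decay of $\epsilon_j$ then makes the cumulative error $O(1)$, which vanishes under the $1/n$ averaging.
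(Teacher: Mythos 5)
Your proof is correct and follows essentially the same route as the paper's: the triangle inequality on $\cG(s,d)$ (Proposition \ref{prop2g}) converts the pointwise angle bound from Theorem~\ref{Th2} into the estimate $|a_{1,n}(V)-a_{1,n}(\cT_0(V))|\le 2\sum_{j\ge 0}\ang(\Phi(j,0)V,\Phi(j,0)\cT_0(V))$, whose geometric summability makes the error $O(1)$ and hence negligible after averaging; \eqref{outerreduction} then follows from \eqref{tracespace} and \eqref{obs2} from the invariance \eqref{invar}. The only cosmetic difference is that the paper collects the telescoping into the single estimate \eqref{partialest} rather than per-summand, but the content is identical.
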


\begin{proof}
  From the triangle inequality (Proposition \ref{prop2g})  we obtain
  \begin{equation} \label{partialest}
    \begin{aligned}
    |a_{1,n}(V) - a_{1,n}(\cT_0(V))| \le \sum_{j=1}^n&
    \left\{\ang(\Phi(j-1,0)V,\Phi(j-1,0)\cT_0(V))\right. \\
    + \left. \ang(\Phi(j,0)V,\Phi(j,0)\cT_0(V))\right\} &
    \le 2 \sum_{j=0}^n \ang(\Phi(j,0)V,\Phi(j,0)\cT_0(V)).
    \end{aligned}
  \end{equation}
    Theorem \ref{Th2} shows that the angles decay geometrically for
  $j \ge \overline{j}(0,V)$, hence the right-hand side is uniformly bounded by a
  constant depending on $V$ only. Therefore \eqref{obs1} follows, and
  \eqref{outerreduction} is an immediate consequence by taking the
  supremum with respect to 
  $V$.

  In the case $s=1$ and $\dim(\cW_m^i)=1$ for $i=1,\ldots,d$ the set
  $\cD_0(1,d)=\{ \cW_0^i: i =1,\ldots,d\}$ becomes finite. Moreover, we
  have $\Phi(j,0)\cW_0^i=\cW_j^i$ by the invariance condition \eqref{invar}.
  Thus, the formula \eqref{outerreduction} simplifies to \eqref{obs2}.
\end{proof}

\medskip

In view of Theorem \ref{Th2}, we revisit crucial examples from 
\cite[Section 3.2]{BeFrHu20}. 
The first model is defined for $n\in\N_0$ and
$0 \le \varphi_0 <\varphi_1 \le \frac \pi 2$ by
$$
A_n = 
\begin{cases}
\left(\begin{smallmatrix}
\cos(\varphi_0) & -\sin(\varphi_0)\\
\sin(\varphi_0) & \cos(\varphi_0)
\end{smallmatrix}\right),&
\text{ for } n = 0 \lor n \in \bigcup_{\ell=1}^\infty
[2^{2\ell-1},2^{2\ell} -1]\cap \N,\\[3mm]
\left(\begin{smallmatrix}
\cos(\varphi_1) & -\sin(\varphi_1)\\\sin(\varphi_1) & \cos(\varphi_1)
\end{smallmatrix}\right),&\text{ otherwise.}
\end{cases}
$$
For this example,  upper and lower angular values do not coincide in
general, more precisely, the diagram \eqref{eq2.4} now reads      
\begin{equation*}
    \begin{matrix}
    \underaccent{\hat} \theta_{[1]} & < & \underaccent{\hat}
    \theta_1 & < & \hat \theta_1 & < & \hat \theta_{[1]}\\
    \rg& & \rg && \rg && \rg\\
    \underaccent{\bar} \theta_{[1]} & < & \underaccent{\bar}
    \theta_1 & < & \bar \theta_1 & < & \bar \theta_{[1]}.
    \end{matrix}
\end{equation*}
For the second example, defined for $n\in\N_0$ by 
$$
A_n :=
\begin{cases}
\left(\begin{smallmatrix}
-1 & 0 \\ 0 & 1
\end{smallmatrix}\right), 
& \text{ for } n \in \bigcup_{\ell = 1}^\infty [2\cdot 2^\ell -
4,3\cdot 2^\ell -5],\\
\left(\begin{smallmatrix}
1 & 0 \\ 0 & \frac 12
\end{smallmatrix}\right), 
&\text{ otherwise}
\end{cases}
$$
inner and outer angular values differ, i.e.\ the diagram \eqref{eq2.4} turns into
\begin{equation*}
    \begin{matrix}
    \underaccent{\hat} \theta_{[1]} & = & \underaccent{\hat}
    \theta_1 & = & \hat \theta_1 & = & \hat \theta_{[1]}\\
    \rg& & \rl && \rl && \rl\\
    \underaccent{\bar} \theta_{[1]} & < & \underaccent{\bar}
    \theta_1 & < & \bar \theta_1 & < & \bar \theta_{[1]}.
    \end{matrix}
\end{equation*}
The dichotomy spectrum of the first example is given by $\Sigma_\ED =
\{1\}$ and for the second example, we obtain $\Sigma_\ED = [\frac 12,
1]$. In both cases $\cW_k^1 = \R^2$ for all $k\in\N$. Thus
one-dimensional trace spaces agree with the given space. In
particular, the detection of angular values cannot be reduced by
Theorem \ref{Th2} and Theorem \ref{cor1} to lower dimensional spaces. 
       
\subsection{Inner angular values and spectral bundles}
\label{sec4.3a}
Inner angular values are more difficult to handle,  both numerically
and theoretically, since the supremum over all subspaces is taken before
going to the limit.
For general dimensions we do not have a result comparable to
Theorem \ref{Th2}. However, for one-dimensional subspaces a reduction
is possible under a uniformity condition. Recall from \eqref{defsums} the notion
\begin{align} \label{abbsum}
   a_{m,n}(v) = \sum_{j=m}^{n} \ang(\Phi(j-1,0)v,\Phi(j,0)v) \quad 
   m,n \in \N,\; v \in \R^d, v\neq 0
\end{align}
with $a_{m,n}(v)=0$ for $m>n$. 
For a subspace $V \subseteq \R^d$ we introduce the quantity
\begin{align*}
  \overline{\theta}_1(V)= \limsup_{n \to \infty} \sup_{v \in V, v \neq 0}\frac{a_{1,n}(v)}{n}.
\end{align*}
  Note that  $\overline{\theta}_1(V)$ is the maximum angular value
  for all one-dimensional subspaces of $V$. In case $V=\R^d$
  this value agree with $\overline{\theta}_1$ as
  defined in \eqref{dinner}.
\begin{theorem} \label{Th4.1}
  Let the assumptions of Theorem \ref{Th2} hold. Further assume that
  the inner and the uniform inner angular values (cf.\ \eqref{dinner},
  \eqref{duniinner} and \eqref{eq2.4}) agree within each fiber,
  i.e.\ for $i=1,\ldots,\ell$ the following holds
  \begin{align} \label{unifiber}
    \overline{\theta}_1(\cW_0^i)=
  \lim_{n\to\infty} \frac{1}{n} \sup_{v \in \cW_0^i, v \neq 0}
  \sup_{k \in \N_0} a_{k+1,k+n}(v).
  \end{align}
  Then the first inner angular value
     $\overline{\theta}=\overline{\theta}_{1}(\R^d)$ satisfies
  \begin{equation*} \label{theta1inner}
    \overline{\theta}_1= \max_{i=1,\ldots,\ell} \overline{\theta}_1(\cW_0^i).
    \end{equation*}
\end{theorem}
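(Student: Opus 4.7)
The plan is to prove the nontrivial direction $\overline{\theta}_1(\R^d)\le M:=\max_i\overline{\theta}_1(\cW_0^i)$; the reverse inequality is immediate from $\cW_0^i\subseteq\R^d$. Fix $\eps>0$. Combining \eqref{unifiber} with the invariance $\Phi(k,0)\cW_0^i=\cW_k^i$ produces an $N_0\in\N$ such that
\[
  \sum_{j=k+1}^{k+n}\ang(\Phi(j-1,k)w,\Phi(j,k)w)\le n(M+\eps)+N_0\pi/2
\]
for every $k\in\N_0$, $n\ge 1$, $i\in\{1,\dots,\ell\}$, and $w\in\cW_k^i\setminus\{0\}$. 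The central step is to prove by induction on the number $r\in\{1,\dots,\ell\}$ of nonzero fibre components of $v\in\R^d$ (in the decomposition $v=\sum_iw_i$ at time $k$ with $w_i\in\cW_k^i$) that a constant $C_r$ exists, independent of $k$, $n$, and $v$, for which the same estimate holds with $v$ in place of $w$ and $N_0\pi/2$ replaced by $C_r$. The base case $r=1$ is the displayed bound, and the theorem then follows by taking $r=\ell$, $k=0$, the supremum over $v$, $\limsup_{n\to\infty}$, and $\eps\to 0^+$.

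For the inductive step, decompose $v=w_{i_1}+u$ with $i_1$ the smallest nonzero fibre index and $u$ having $r-1$ nonzero components. Because $w_{i_1}\neq 0$, the one-dimensional subspace $V:=\mathrm{span}(v)$ satisfies $V\cap\range(P^s_{k,i_1+1})=\{0\}$, so Theorem \ref{Th1.1} applied to the scaled system with $\gamma\in R_{i_1+1}$ gives $Q^s_k=0$, $P^u_{k,i_1+1}V=\mathrm{span}(w_{i_1})$, and the geometric decay
\[
  \ang(\Phi(j,k)v,\Phi(j,k)w_{i_1})\le\tfrac{2K^2}{\sqrt3}\mu^{j-k}\tfrac{\|u\|}{\|w_{i_1}\|},\quad \mu:=\tfrac{\sigma_{i_1+1}^+}{\sigma_{i_1}^-}<1,
\]
for $j-k\ge T$, where $T$ is the smallest integer satisfying $K^2\mu^T\|u\|/\|w_{i_1}\|\le 1/2$. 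I split $[k+1,k+n]$ into a pre-alignment interval $[k+1,k+T]$ and a post-alignment interval $[k+T+1,k+n]$. On the post-alignment, the triangle inequality (as in \eqref{partialest}) together with the telescoping geometric tail, uniformly controlled by the defining condition of $T$, reduces the angular sum of $v$ to that of $w_{i_1}\in\cW_k^{i_1}$ up to an additive constant depending only on $K$ and $\mu$, and the base case then bounds it by $(n-T)(M+\eps)+N_0\pi/2+O(1)$. On the pre-alignment, I apply the backward counterpart of Theorem \ref{Th1.1} (stated and proved immediately after its forward version) to $V':=\Phi(k+T,k)V$ at time $k+T$, noting that $V'\cap\range(P^u_{k+T,i_1+1})=\{0\}$, $Q^u_{k+T}=0$, and $\rho^u_{k+T}(V')=\|\Phi(k+T,k)w_{i_1}\|/\|\Phi(k+T,k)u\|\ge 2$ by the choice of $T$; this symmetrically reduces the pre-alignment angular sum of $v$ to that of $u$ up to a uniformly $O(1)$ residual, and the inductive hypothesis applied to $u$ then bounds the reduced sum by $T(M+\eps)+C_{r-1}$. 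Summing the two intervals produces the inductive step with $C_r=C_{r-1}+N_0\pi/2+O(1)$; iterating over the at-most-$\ell$ induction levels yields a finite $C_\ell$.

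The main obstacle lies in verifying that the pre-alignment residual is $O(1)$ uniformly in $v$. The telescoping backward tail $\tfrac{2K^2}{\sqrt3}\rho^u_{k+T}\sum_{s\ge j^u_{k+T}}\mu^s$ collapses to a constant via the backward index condition $K^2\mu^{j^u_{k+T}}\rho^u_{k+T}\le 1/2$, but the $j^u_{k+T}$ untrimmed pre-transition steps contribute up to $j^u_{k+T}\pi/2$, and this requires a uniform upper bound on $\rho^u_{k+T}(V')$, equivalently a lower bound on $\|\Phi(k+T,k)u\|$, that the raw exponential dichotomy does not directly furnish. The resolution is to decompose $u=w_{i_2}+\tilde u$ further and invoke \eqref{const_alpha} to obtain $\|\Phi(k+T,k)u\|\ge\tfrac12 K^{-1}(\sigma_{i_2}^-)^T\|w_{i_2}\|$ once $T$ exceeds the analogous alignment threshold associated with $\tilde u$; the residual corner case (where that threshold exceeds $T$) is absorbed either by a secondary induction on $\tilde u$ or by a phase-based subdivision of the pre-alignment interval according to the dominant fibre at each time step, both of which yield the required uniform residual and close the induction.
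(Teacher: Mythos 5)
Your overall strategy mirrors the paper's: reduce to the inequality $\overline{\theta}_1(\R^d)\le\max_i\overline{\theta}_1(\cW_0^i)$, split the angular sum around a transition time into a post-transition block handled by forward geometric alignment toward the top fibre component and a pre-transition block handled by backward geometric alignment toward the remaining stable part, then invoke the uniformity hypothesis \eqref{unifiber} to control both blocks by their respective angular values. The induction bookkeeping differs slightly (you induct on the number of nonzero fibre components, the paper inducts on $i$ via $\overline{\theta}_1(\range(P_{0,i}^s))\le\max(\overline{\theta}_1(\cW_0^i),\overline{\theta}_1(\range(P_{0,i+1}^s)))$), but this is cosmetic.

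The genuine gap lies exactly where you flag it, and your proposed patches do not close it. You define the transition time $T$ a priori from the initial data, as the smallest integer with $K^2\mu^T\|u\|/\|w_{i_1}\|\le\tfrac12$. This overshoots the actual crossover of the two components: by the time $k+T$ is reached, the unstable part $\Phi(k+T,k)w_{i_1}$ may have grown far beyond the stable part $\Phi(k+T,k)u$, and the ratio $\rho^u_{k+T}(V')=\|\Phi(k+T,k)w_{i_1}\|/\|\Phi(k+T,k)u\|$ has no $v$-independent upper bound. The exponential dichotomy gives only the upper bound $\|\Phi(k+T,k)u\|\le K(\sigma_{i_1+1}^+)^T\|u\|$ on the stable side, never a lower bound, because $u$ can be aligned arbitrarily closely with the most strongly contracting fibre. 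As a result $j^u_{k+T}$, which must satisfy $K^2 q^{j^u_{k+T}}\rho^u_{k+T}\le\tfrac12$, is unbounded over $v$, and the untrimmed interval of length $j^u_{k+T}$ before the backward estimate kicks in contributes $j^u_{k+T}\cdot\pi/2$ — not $O(1)$. Your own lower bound $\rho^u_{k+T}\ge 2$ shows the ratio has tipped, but tipping is not enough; one needs it bounded. The suggested further decomposition $u=w_{i_2}+\tilde u$ does not help: the best lower bound it yields is $\|\Phi(k+T,k)u\|\gtrsim(\sigma_{i_2}^-)^T\|w_{i_2}\|$, which gives $\rho^u_{k+T}\lesssim(\sigma_{i_1}^+/\sigma_{i_2}^-)^T\|w_{i_1}\|/\|w_{i_2}\|$, and since $\sigma_{i_1}^+>\sigma_{i_2}^-$ and $T$ grows with $\|u\|/\|w_{i_1}\|$, this still diverges. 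The ``secondary induction'' and ``phase-based subdivision'' are not worked out and face the same obstruction at the next level.

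The missing idea, which is precisely what the paper's proof supplies, is to define the transition time \emph{dynamically} rather than a priori: set $k_\star=k_\star(v)$ to be the \emph{first} index at which $\|P^s_{k,i+1}\Phi_\gamma(k,0)v\|\le\|P^u_{k,i+1}\Phi_\gamma(k,0)v\|$. This single choice simultaneously yields $\rho^s_{k_\star}(v)\le 1$ (so the forward estimate from Theorem \ref{Th1.1} applies for $j\ge k_\star+j_\star$ with a $v$-independent $j_\star$) and, by minimality, $\rho^u_{k_\star-1}(v)<1$ (so the backward estimate \eqref{estangstab} applies for $j\le k_\star-1-j_\star$ with the same $j_\star$). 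Both residual windows then have the uniform length $\approx j_\star$, contributing a $v$-independent constant, and the remainder of the argument goes through exactly as you envision. Replacing your $T$ with this $k_\star$ repairs the proof; without that replacement, it does not close.
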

\begin{proof}
  The main step is to show for $i=1,\ldots,\ell$
  \begin{equation} \label{thetai}
    \overline{\theta}_1(\range(P_{0,i}^s)) 
    \le \max\left(\overline{\theta}_1(\cW_0^i),
   \overline{\theta}_1(\range(P_{0,i+1}^s))\right).
  \end{equation}
  Since $P_{0,1}^s=I_d$, $P_{0,\ell+1}^s=0$ and $\sup_{\emptyset}=0$,
  we obtain by induction 
  \begin{align*}
    \overline{\theta}_1(\R^d)
    \le \max_{i=1,\ldots,\ell} \overline{\theta}_1(\cW_0^i).
  \end{align*}
  The  converse inequality ``$\ge$'' is obvious, hence our assertion
  is proved.

  In the following we choose $j_{\star}$ such that (cf.\ \eqref{angletotrace})
  \begin{align*}
    2 K^2 q^{j_{\star}} \le 1, \quad \text{where} \quad
    q:= \max_{i=1,\ldots,\ell}
  \frac{\sigma_{i+1}^+}{\sigma_i^-} <1.
\end{align*}
  
  For the proof of \eqref{thetai} it is enough to consider $v \in \range(P_{0,i}^s)$
  with $v \notin \range(P_{0,i+1}^s)$ and $v \notin \cW_0^i$.
  Figure \ref{idea_proof} illustrates the idea of the proof.
  Vectors $v$ close to but not in $\range(P_{0,i+1}^s)$
    may spend arbitrarily large time near $\range(P_{0,i+1}^s)$ until finally converging to $\range(P_{0,i+1}^u)$. There is an exponentially growing initial phase switching at
    some index $k_{\star}$ to an exponentially decreasing final phase.
    Though the switching point $k_{\star}$ depends on $v$ our uniformity
    assumption will allow estimates independent of $v$.
  \begin{figure}[hbt]
 \begin{center}
   \includegraphics[width=0.60\textwidth]{ 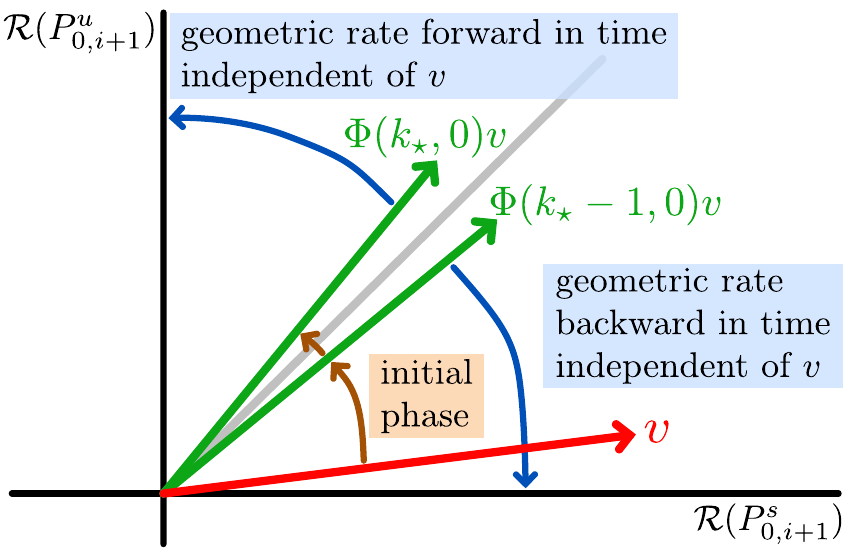}
 \end{center}
 \caption{\label{idea_proof}Idea of proof for Theorem
   \ref{Th4.1}. First, we construct a $v$-dependent index $k_\star$,
   such that $\Phi(k_\star,0)v$ is just above the diagonal. Then, we
   exploit geometric convergence forward resp.\ backward in time for
   proving $v$-independent estimates.}
\end{figure}
   We choose $\gamma \in R_{i+1}$
  and apply Theorem \ref{Th1.1} to $\Phi_{\gamma}$ and $V=\mathrm{span}(v)$
  as in Step 2 of the proof of Theorem \ref{Th2}. Note that the
  projector $Q_{0,i+1}^s:\R^d \to \range(P_{0,i+1}^s)\cap V$ from
  \eqref{Qkidef} 
  is trivial since $V$ is one-dimensional and $v \notin \range(P_{0,i+1}^s)$.
  Moreover, by  \eqref{fiberproj} we have $\cP_{0,i}v= P_{0,i+1}^u
  P_{0,i}^sv= P_{0,i+1}^uv \neq 0$. 
  Therefore, we can invoke inequality \eqref{powerest}
  from the proof of Theorem \ref{Th1.1} with $\tilde{P}=P_{k,i+1}^u$,
  $\rho_k^s(v)=\frac{\|P_{0,i+1}^sv\|}{\|P_{0,i+1}^uv\|}$, $\alpha_s
  \alpha_u \le q$. This shows  
  \begin{align*}
    \|P_{k,i+1}^s \Phi_{\gamma}(k,0)v\| \le K^2 q^k\rho_k^s(v) \|
    P_{k,i+1}^u \Phi_{\gamma}(k,0)v\| \quad \forall k \in \N_0. 
  \end{align*}
  We conclude that the following index, depending on $v$, exists
  \begin{align*}
    k_{\star}=k_{\star}(v) = \min\{ k \in \N_0:\|P_{k,i+1}^s \Phi_{\gamma}(k,0)v\|
    \le \| P_{k,i+1}^u \Phi_{\gamma}(k,0)v\|\}.
  \end{align*}
  Applying Theorem \ref{Th1.1} once more to
  $\Phi_{\gamma}$ and $V= \mathrm{span}(\Phi_{\gamma}(k_{\star},0)v)$ then shows
  for $j \ge k_{\star}+j_{\star}$
  \begin{equation*} \label{forwardest}
    \begin{aligned}
      & \ang(\Phi(j,k_{\star}) \Phi(k_{\star},0)v,
      \Phi(j,k_{\star})P_{k_{\star},i+1}^u \Phi(k_{\star},0)v) \\
      & = \ang(\Phi(j,0)v,\Phi(j,0)P_{0,i+1}^uv)
      \le \frac{2 K^2}{\sqrt{3}} q^{j-k_{\star}}.
    \end{aligned}
  \end{equation*}
  Next we estimate angles for $j \le k_{\star}$ by invoking \eqref{estangstab}
  with $k = k_{\star}-1$ and $V=\mathrm{span}(\Phi_{\gamma}(k_{\star}-1,0)v)$.
  Since $v \notin \cW_0^i$ the projector in \eqref{Qkudef} is trivial,
  and \eqref{indexularge} holds by the choice of $j_{\star}$. 
  Hence we obtain for $0\le j \le k_{\star}-1-j_{\star}$
  \begin{equation*} \label{backwardest}
    \begin{aligned}
     & \ang(\Phi(j,k_{\star}-1) \Phi(k_{\star}-1,0)v,
      \Phi(j,k_{\star})P_{k_{\star},i+1}^s \Phi(k_{\star},0)v)\\
    &= \ang(\Phi(j,0)v, \Phi(j,0)P_{0,i+1}^s v) \le \frac{2 K^2}{\sqrt{3}}
    q^{k_{\star}-1-j}.
    \end{aligned}
  \end{equation*}
  Combining these estimates with the triangle inequality we find with
  suitable constants $C$ independent of $n,j,v$,
  \begin{equation} \label{criticalest}
    \begin{aligned}
   \frac{a_{1,n}(v)}{n} &= \frac{1}{n}\sum_{j=1}^{n} \ang(\Phi(j-1,0)v,\Phi(j,0)v)\\
   & \le \frac{1}{n} \Big[\big\{ \sum_{j=1}^{k_{\star}-1-j_{\star}} +
   \sum_{j=k_{\star}+1+j_{\star}}^n 
     \big\}\ang(\Phi(j-1,0)v,\Phi(j,0)v) + (2j_{\star}+1) \frac{\pi}{2} \Big]\\
   &\le \frac{1}{n} \Big[ \frac{4K^2}{\sqrt{3}}( \sum_{j=0}^{k_{\star}-1-j_{\star}}
       q^{k_{\star}-1-j}+ \sum_{j=k_{\star}}^n q^{j-k_{\star}})+C \\ 
      & + \sum_{j=1}^{k_{\star}}\ang(\Phi(j-1,0)P_{0,i+1}^sv,\Phi(j,0)P_{0,i+1}^sv)
       \\
       & + \sum_{j=k_{\star}+1}^n\ang(\Phi(j-1,0)P_{0,i+1}^uv,\Phi(j,0)P_{0,i+1}^uv)
       \Big]\\
   & \le \frac{1}{n}\left[ C + a_{1,k_{\star}}(P_{0,i+1}^sv) +
     a_{k_{\star}+1,n}(P_{0,i+1}^u v) \right].
    \end{aligned}
  \end{equation}
  Given $\varepsilon>0$, assumption \eqref{unifiber} yields a number
  $n_0=n_0(\varepsilon)$ such that 
  \begin{equation} \label{epscond}
 \begin{aligned}
   \frac{1}{n} \sup_{k \in \N_0} \sup_{v \in \cW_0^i}a_{k+1,k+n}(v) &\le
   \overline{\theta}_1(\cW_0^i) + \varepsilon, \quad \forall n \ge n_0,\\
    \frac{1}{n} \sup_{v \in \range(P_{0,i+1}^s)} a_{1,n}(v) & \le
   \overline{\theta}_1(\range(P_{0,i+1}^s))+\varepsilon, \quad \forall n \ge n_0.
 \end{aligned}
 \end{equation}
  Thus we have
  \begin{equation*} \label{estvarious}
 \begin{aligned}
   a_{k_{\star}+1,n}(P_{0,i+1}^uv)& \le
   \begin{cases} (n-k_{\star})(\overline{\theta}_1(\cW_0^i) + \varepsilon),
     & \text{if } n - k_{\star} \ge n_0, \\
     n_0\frac{\pi}{2}, & \text{if } n - k_{\star} < n_0,
     \end{cases}
     \\
   a_{1,k_{\star}}(P_{0,i+1}^sv) & \le
   \begin{cases} k_{\star} (\overline{\theta}_1(\range(P_{0,i+1}^s))+\varepsilon),
     & \text{if } k_{\star} \ge n_0, \\
     n_0 \frac{\pi}{2}, & \text{if } k_{\star} < n_0.
     \end{cases}
 \end{aligned}
 \end{equation*}
 Summing up, we obtain for $n \ge n_0(\varepsilon)$,
 \begin{align*}
   \frac{a_{1,n}(v)}{n} & \le \frac{1}{n} \left[ 
      \min(k_{\star},n)(\overline{\theta}_1(\range(P_{0,i+1}^s))+\varepsilon)
      + (n-\min(n,k_{\star}))(\overline{\theta}_1(\cW_0^i) + \varepsilon)
      \right. \\
   & \left.  \qquad + C + n_0\pi \right] \le \max(\overline{\theta}_1(\cW_0^i),
   \overline{\theta}_1(\range(P_{0,i+1}^s))) +\varepsilon +
   \frac{1}{n}(C +  n_0\pi).
 \end{align*}
 Finally, the assertion \eqref{thetai} follows by taking the supremum over $v$
 and making the last term small for $n$ sufficiently large.
\end{proof}
       
\subsection{Uniform angular values and spectral bundles}
\label{sec4.4}
In this section we extend Theorem \ref{cor1} and Theorem \ref{Th4.1}
to uniform 
outer and inner angular values. As before, we show that it is enough
to compute angular values for subspaces which have their basis in
the fibers induced by the dichotomy spectrum. Since we deal with
uniform angular values a uniformity condition like \eqref{unifiber} is
no longer needed.

\begin{theorem}\label{Th4.2}
  Let the assumptions of Theorem \ref{Th2} hold. Then the uniform
  outer angular values $\hat
  \theta_{[s]},\underaccent{\hat}\theta_{[s]}$, $s=1,\ldots,d$,  can
  be represented with the partial sums \eqref{defsums} 
  and the  set of trace spaces \eqref{tracespace}  as follows:
 \begin{align}
\label{sup-eq}
\hat \theta_{[s]} &=
\sup_{V\in\cD_0(s,d)}
\lim_{n\to \infty} \sup_{k\in\N_0} \frac 1n
a_{k+1,k+n}(V),\\
\label{inf-eq}
  \underaccent{\hat}\theta_{[s]} &=
 \sup_{V\in\cD_0(s,d)}                                  
\lim_{n\to \infty} \inf_{k\in\N_0} \frac 1n
a_{k+1,k+n}(V).
 \end{align}
 With the partial sums from \eqref{abbsum}, the first uniform inner
 angular value satisfies 
 \begin{equation} \label{unisupeq}
   \overline{\theta}_{[1]}= \max_{i=1,\ldots,\ell} \overline{\theta}_{[1]}(\cW_0^i),
   \quad \text{where}\quad \overline{\theta}_{[1]}(V)=
   \lim_{n \to \infty} \sup_{v \in V}\sup_{k \in \N_0} \frac{1}{n} a_{k+1,k+n}(v).
   \end{equation}
\end{theorem}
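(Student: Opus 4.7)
The identities \eqref{sup-eq} and \eqref{inf-eq} should follow from a uniform-in-$k$ version of the reduction already used in Theorem \ref{cor1}, while \eqref{unisupeq} should follow from an induction analogous to the one in Theorem \ref{Th4.1}.

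For \eqref{sup-eq} and \eqref{inf-eq}, the plan is to invoke Theorem \ref{Th2} at $k=0$ to obtain $\ang(\Phi(j,0)V,\Phi(j,0)\cT_0(V)) \le C(V)\, q^{j}$ for $j \ge \overline{j}(V)$, where $q = \max_{i}\sigma_{i+1}^+/\sigma_i^- < 1$. Exactly as in \eqref{partialest}, the triangle inequality then gives
\[
  |a_{k+1,k+n}(V) - a_{k+1,k+n}(\cT_0(V))| \;\le\; 2\sum_{j=k}^{k+n} \ang(\Phi(j,0)V,\Phi(j,0)\cT_0(V)) \;\le\; M(V),
\]
with $M(V)$ independent of $k$ and $n$, since the geometrically decaying tail is summable and the finitely many initial terms are bounded by $\pi/2$. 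Dividing by $n$ and passing to $\sup_k$ respectively $\inf_k$, the two limits for $V$ and $\cT_0(V)$ must agree. Existence of those limits is routine via Fekete's lemma: splitting $a_{k+1,k+n+m}(V) = a_{k+1,k+n}(V) + a_{k+n+1,k+n+m}(V)$ shows that $n \mapsto \sup_k a_{k+1,k+n}(V)$ is subadditive and $n \mapsto \inf_k a_{k+1,k+n}(V)$ is superadditive. Taking the supremum over $V \in \cG(s,d)$ and invoking the surjectivity $V \mapsto \cT_0(V)$ onto $\cD_0(s,d)$ recorded in \eqref{tracespace} yields \eqref{sup-eq} and \eqref{inf-eq}.

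For \eqref{unisupeq}, the plan is to imitate Theorem \ref{Th4.1} and prove inductively, for $i = 1,\dots,\ell$,
\[
  \overline{\theta}_{[1]}(\range(P_{0,i}^s)) \;\le\; \max\bigl(\overline{\theta}_{[1]}(\cW_0^i),\; \overline{\theta}_{[1]}(\range(P_{0,i+1}^s))\bigr),
\]
which, combined with $P_{0,1}^s = I_d$, $P_{0,\ell+1}^s=0$ and the trivial reverse inequality, closes the argument. For a nontrivial $v \in \range(P_{0,i}^s)\setminus(\range(P_{0,i+1}^s)\cup\cW_0^i)$, I would use the same crossover index $k_\star(v)$ introduced in the proof of Theorem \ref{Th4.1}, noting that $P_{0,i+1}^u v = \cP_{0,i} v \in \cW_0^i$. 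Given any starting time $k$ and length $n$, I would distinguish the three cases $k+n \le k_\star$, $k \ge k_\star$, or $k < k_\star < k+n$, in each case bounding $a_{k+1,k+n}(v)$ by the corresponding partial sum for $P_{0,i+1}^s v$, for $P_{0,i+1}^u v$, or for both, plus a constant $C$ coming from the $j_\star$-many unbounded transition terms and the geometric tails; the key point is that $j_\star$ and this constant depend only on $K$ and $q$. Using the definition of $\overline{\theta}_{[1]}$ on each fiber/subspace (which supplies the analogue of \eqref{epscond} automatically, since $\overline{\theta}_{[1]}$ is already the uniform quantity), one arrives at
\[
  \frac{a_{k+1,k+n}(v)}{n} \;\le\; \max\bigl(\overline{\theta}_{[1]}(\cW_0^i),\; \overline{\theta}_{[1]}(\range(P_{0,i+1}^s))\bigr) + \varepsilon + \frac{C + n_0\pi}{n},
\]
uniformly in $v$ and $k$. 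Taking $\sup_{v,k}$, then $n \to \infty$, then $\varepsilon \to 0$, yields the inductive step.

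The main obstacle is this last uniformity bookkeeping: the crossover $k_\star(v)$ depends on $v$ while the window $[k+1,k+n]$ can be placed anywhere along $\N_0$, so one has to verify that the splitting constants, the geometric tails, and the threshold $n_0$ can all be chosen independently of $v$ and $k$. This is the genuine refinement beyond Theorem \ref{Th4.1}, where only the fixed window $[1,n]$ was in play. Conveniently, the extra uniformity hypothesis \eqref{unifiber} required there is now free: $\overline{\theta}_{[1]}(\cW_0^i)$ is the uniform quantity from the outset, so the bound on the fiber contribution comes directly from the definition.
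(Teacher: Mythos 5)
Your proposal is correct and follows essentially the same route as the paper: reduce to trace spaces via the uniform-in-$k$ bounded-difference estimate from Theorem \ref{Th2}, then prove the inductive inequality $\overline{\theta}_{[1]}(\range(P_{0,i}^s)) \le \max\bigl(\overline{\theta}_{[1]}(\cW_0^i), \overline{\theta}_{[1]}(\range(P_{0,i+1}^s))\bigr)$ by adapting the proof of Theorem \ref{Th4.1}, with the uniformity hypothesis \eqref{unifiber} now automatic since $\overline{\theta}_{[1]}$ is already the uniform quantity. Your passage to the limit in \eqref{sup-eq}/\eqref{inf-eq} is marginally more streamlined than the paper's argument (you bound $|\sup_k a_{k+1,k+n}(V) - \sup_k a_{k+1,k+n}(\cT_0(V))| \le M(V)$ directly, whereas the paper selects an $\varepsilon$-near-maximizer $k(n)$), but the underlying observation is the same.
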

\begin{proof} 
  Recall $a_{m,n}(V)$ from \eqref{defsums} and use
  \eqref{partialest}, \eqref{angletotrace} to find that $V\in \cG(s,d)$
  satisfies with some constant $C$ depending on $V$ but not on $k,n$,
  \begin{equation} \label{unitrace}
    \begin{aligned}
    |a_{k+1,k+n}(V) - a_{k+1,k+n}(\cT_0(V))| & \le 2 \sum_{j=k}^{k+n}
    \ang(\Phi(j,0)V,\Phi(j,0)(\cT_0(V))) \\
    & \le 2 C(0,V)  \sum_{j=k}^{k+n} q^{j-k} \le C.
    \end{aligned}
  \end{equation}
    Given $\varepsilon>0$, choose $n_0$ such that for $n \ge n_0$
  \begin{align*}
    \big| \frac{1}{n} \sup_{k\in \N_0}a_{k+1,k+n}(V) - \lim_{m\to \infty}
    \frac{1}{m} \sup_{k\in \N_0}a_{k+1,k+m}(V)\big| \le \varepsilon.
  \end{align*}
  Then select $k(n)\in \N_0$ such that
  $\frac{1}{n} |a_{k(n)+1, k(n)+n}(V) - \sup_{k \in \N_0} a_{k+1,k+n}(V)|
  \le \varepsilon$ holds for $n\ge n_0$. This implies
  \begin{align*}
    \big| \frac{1}{n} a_{k(n)+1, k(n)+n}(V)-\lim_{m\to \infty}\frac{1}{m}
    \sup_{k\in \N_0}a_{k+1,k+m}(V)\big| \le 2 \varepsilon, \quad n \ge n_0.
  \end{align*}
  With \eqref{unitrace} we obtain  for $n \ge n_0$ 
  \begin{align*}
    \lim_{m\to \infty}  \frac{1}{m} \sup_{k\in \N_0}a_{k+1,k+m}(V) & \le
    \frac{1}{n} a_{k(n)+1, k(n)+n}(V) + 2 \varepsilon \\
    & \le \frac{C}{n} +\frac{1}{n} a_{k(n)+1, k(n)+n}(\cT_0(V)) + 2 \varepsilon\\
    & \le \frac{C}{n} +\frac{1}{n} \sup_{k \in \N_0} a_{k+1,
      k+n}(\cT_0(V)) + 2 \varepsilon. 
  \end{align*}
  As $n \to \infty$ this shows
  \begin{align*}
    \lim_{n\to \infty} \frac{1}{n} \sup_{k\in \N_0}a_{k+1,k+n}(V) \le 2 \varepsilon
    + \lim_{n\to \infty} \frac{1}{n} \sup_{k\in \N_0}a_{k+1,k+n}(\cT_0(V)).
  \end{align*}
  A corresponding inequality with $V$ and $\cT_0(V)$ exchanged, is
  proved in the same manner, and \eqref{sup-eq} 
  follows by taking the supremum over $V \in \cG(s,d)$.
  The same type of estimate leads to \eqref{inf-eq}.
  The formula in \eqref{unisupeq} follows by adapting the proof of
  Theorem \ref{Th4.1}. 
  For the quantities
    $\overline{\theta}_{[1]}(V)$  from \eqref{unisupeq}
  we show
  \begin{equation} \label{unithetai}
    \overline{\theta}_{[1]}(\range(P_{0,i}^s)) 
    \le \max\left(\overline{\theta}_{[1]}(\cW_0^i),
   \overline{\theta}_{[1]}(\range(P_{0,i+1}^s))\right), \quad i=1,\ldots,\ell.
  \end{equation}
  The estimate \eqref{criticalest} for
  $v \in \range(P_{0,i}^s)\setminus(\range(P_{0,i+1}^s)\cup\cW_0^i)$ now reads
  \begin{align*}
    \frac{1}{n}a_{k+1,k+n}(v) & \le \frac{1}{n} \left[ C +
                                a_{k+1,k_{\star}-j_{\star}-1}(P_{0,i+1}^sv)
                                +
                                a_{\max(k,k_{\star}+j_{\star}+1),n+k}
                                (P_{0,i+1}^uv) \right]. 
  \end{align*}
  Recall $a_{m,n}=0$ for $m>n$ and note that there is no relation between
  $k, k_{\star}(v)$ and $n$. The condition \eqref{epscond} for $n_0$ turns into
  \begin{equation*} \label{uniepscond}
  \begin{aligned}
   \frac{1}{n}  \sup_{v \in \cW_0^i} \sup_{k \in \N_0}a_{k+1,k+n}(v) &\le
   \overline{\theta}_{[1]}(\cW_0^i) + \varepsilon, \quad \forall n \ge n_0,\\
    \frac{1}{n}\sup_{v \in \range(P_{0,i+1}^s)} \sup_{k \in \N_0} a_{k+1,k+n}(v) & \le
   \overline{\theta}_{[1]}(\range(P_{0,i+1}^s))+\varepsilon, \quad
   \forall n \ge n_0. 
 \end{aligned}
  \end{equation*}
  This  leads to
  \begin{equation} \label{estunifvarious}
 \begin{aligned}
   a_{k_1,n+k}(P_{0,i+1}^uv)& \le
   \begin{cases} (n+k-k_1)(\overline{\theta}_{[1]}(\cW_0^i) + \varepsilon),
     & \text{if }n+ k - k_1 \ge n_0, \\
     n_0\frac{\pi}{2}, & \text{if } n+ k - k_1  < n_0,
     \end{cases}
     \\
   a_{k+1,k_2}(P_{0,i+1}^sv) & \le
   \begin{cases}( k_2-k-1)
     (\overline{\theta}_{[1]}(\range(P_{0,i+1}^s))+\varepsilon), 
     & \text{if } k_2-k-1 \ge n_0, \\
     n_0 \frac{\pi}{2}, & \text{if }k_2-k-1  < n_0,
     \end{cases}
 \end{aligned}
 \end{equation}
 where $k_1= \max(k,k_{\star}+j_{\star}+1)$, $k_2=k_{\star}-j_{\star}-1$.
 In both cases $k \ge k_{\star}+j_{\star}+1$ and $k < k_{\star}+j_{\star}+1$
 we find the estimate $n+k - k_1 + k_2 - k -1 \le n$ for the sum of
 coefficients in \eqref{estunifvarious}. Hence, we can continue 
 \begin{align*}
   \frac{1}{n}a_{k+1,k+n}(v)&  \le \frac{1}{n} \left[ C + n_0 \pi
     + n \left(\max(\overline{\theta}_{[1]}(\cW_0^i),
    \overline{\theta}_{[1]}(\range(P_{0,i+1}^s))) + \varepsilon\right) \right]. 
 \end{align*}
 Taking the supremum over $k$ and $v$ and then letting $n \to \infty$ yields
 the assertion \eqref{unithetai} as in the proof of Theorem \ref{Th4.1}.
 \end{proof}


\section{Numerical algorithms and results}
\label{sec5}
The aim of this section is to develop an algorithm for the
numerical detection of outer angular values.
The previous section provides the essential reduction result in
Theorem \ref{cor1}:
\begin{align*}
\hat \theta_s &= \sup_{V \in \cG(s,d)} \limsup_{n\to\infty} \frac 1n
\sum_{j=1}^n \ang(\Phi(j-1,0)V, \Phi(j,0)V)\\
&= \sup_{V \in \cD_0(s,d)} \limsup_{n\to\infty} \frac 1n
   \sum_{j=1}^n \ang(\Phi(j-1,0)V, \Phi(j,0)V).
\end{align*}
The search for the supremum of $V$ in $\cD_0(s,d)$ instead of
$\cG(s,d)$ reduces the computational effort substantially and, in some
cases, one needs to consider only finitely many subspaces.
This reduction method receives further support from the fact that some obvious
numerical approaches tend to fail:
\begin{itemize}
\item Algorithms based on a simple forward
  iteration cannot provide the largest angular value. 
  A generic subspace is pushed by the
dynamics towards the most unstable trace space of equal
dimension, see the upper row in Figure \ref{Th24}. 
But in general, the angular value is not achieved in this subspace.
For the correct angular value, also non-generic subspaces
must be considered, as sketched in the lower row of 
Figure \ref{Th24}. We refer to the H\'{e}non example
  in Section \ref{sec_hen} and to Figure \ref{naiv} which
  illustrates the failure of a naive approach. 
\item Algorithms based on the computation of eigenvalues and
  eigen\-spaces, e.g.\ by applying the Schur decomposition,
  provide good results for
  autono\-mous systems. This fits well to our theory, since in the
  autonomous case, spectral bundles w.r.t.\ 
  the dichotomy spectrum are indeed eigenspaces.
The corresponding analysis for autonomous systems is carried out in
detail in \cite{BeFrHu20}. 
  For nonautonomous models, eigenvalues of linearizations at a fixed time are
  known to be dynamically
  irrelevant as was first shown by Vinograd; see
  \cite{v52}. Corresponding algorithms fail in testing all trace
  subspaces.
\end{itemize}

To resolve these issues, we first detect the dichotomy spectrum and
the corresponding spectral bundles. Then all trace subspaces become
available, resulting in a numerically expensive but reliable
approximation of $\hat \theta_s$.  Finally, let us emphasize
  that this value aims at finding the subspace of maximal rotation in
  the global attractor of the system (if it exists). In general, our
  approach to angular values ignores other dynamically relevant features of
  the system, such as invariant manifolds or unstable fixed points
  and unstable periodic orbits if they do not belong to the $\omega$-limit set
  of the current trajectory.
 
\subsection{An algorithm for computing angular
  values}\label{algorithm}
Consider the difference equation \eqref{diffeq} on an
interval $\I$ that is bounded from below. 
We propose the following three steps for the numerical approximation of
$\hat \theta_s$. 

First, we compute the dichotomy spectrum in step 1 followed by an
approximation of the
corresponding fiber bundles in step 2. To obtain accurate numerical results
on the discrete interval $[0,M]\cap \N_0$, buffer intervals of length
$b$ (we choose $b=50$) are needed.
The algorithm from \cite{fhmw13, h17} requires to solve least squares 
problems on the extended interval
$\I=[-b,M+b]\cap \N_0$. 

The crucial part of our algorithm is the approximation of $\hat
\theta_s$ for $s\in\{1,2\}$ in step 3. It is based on the reduction 
results from Section \ref{sec4} and therefore needs the 
spectral bundles that are computed in step 2. 
Readers, familiar with the computation of dichotomy spectra and
spectral bundles may proceed directly with step 3.  
 
\subsubsection*{Step 1: Computation of the dichotomy spectrum}

The computation of Bohl exponents leads to an efficient algorithm for
the approximation of the dichotomy spectrum.
Upper and lower Bohl exponents of the scalar difference equation
\begin{equation*}\label{scalar}
u_{n+1} = a_n u_n,\quad n\in\I,\quad 0 < \inf_{n\in\I} |a_n| \le
\sup_{n\in\I}|a_n| < \infty
\end{equation*}
are defined as, see \cite{hp14}
$$
\underline{\beta}(a_\I):=\lim_{n\to\infty}\inf_{\kappa\in\I}
\left(\prod_{j=\kappa}^{\kappa+n-1}|a_j|\right)^{\frac 1n},\qquad
\overline{\beta}(a_\I):=\lim_{n\to\infty}\sup_{\kappa\in\I}
\left(\prod_{j=\kappa}^{\kappa+n-1}|a_j|\right)^{\frac 1n}.
$$
It follows that $\Sigma_\ED =[ \underline{\beta}(a_\I),
\overline{\beta}(a_\I)]$.

For the $d$-dimensional difference equation \eqref{diffeq}, a
corresponding result is more 
delicate to obtain. One may first transform the system into upper triangular
form, using a \texttt{qr}-decomposition $A = QT$ of a given matrix $A$
into the 
product of an orthogonal matrix $Q$ and an upper triangular matrix $T$, see
\cite[Section 4.4]{hu09} and \cite{dev10}: 
\medskip
  
\begin{center}
 \begin{minipage}{.3\linewidth}
\begin{algorithmic}
\State{$Q_0 T_0 = \texttt{qr}(A_0)$}
\For {$j = 1,2,\dots$}
\State{$Q_j T_j = \texttt{qr}(A_j Q_{j-1})$}
\EndFor
\end{algorithmic}
\end{minipage}
\end{center}
\medskip

Note that $A_j = Q_j T_j Q_{j-1}^\top$ for $j\ge 1$.
For non-degenerate models, the Bohl exponents of the diagonal
entries of $T_j$ (denoted by $T_j(i,i)$)
determine the dichotomy spectrum.
We refer to \cite{p16} for details
on the corresponding theoretical background. More relations of
Bohl exponents to other exponents and a perturbation analysis may be
found  for discrete-time systems in \cite{Barabanov2017} and for continuous-time
systems in \cite{Barabanov2001}.
In our case we fix a sufficiently large $H\in\N$ (called the Steklov window)
and compute
\begin{equation}\label{betaBohl}
\beta(i,\kappa) := \left(\prod_{j=\kappa}^{\kappa+H} |T_j(i,i)|\right)^{\frac
  1H},\quad i = 1,\dots,d,\quad \kappa=0,1,\dots. 
\end{equation}
With $\underline{\beta}(i) := \min_{\kappa} \beta(i,\kappa)$,
$\overline{\beta}(i) := \max_{\kappa} \beta(i,\kappa)$ we obtain the approximate spectrum
$\Sigma_\ED \approx \bigcup_{i=1}^d [ \underline{\beta}(i),
  \overline{\beta}(i)]$ where the values are ordered according to
$\underline{\beta}(1)\ge \cdots \ge \underline{\beta}(d)$.
In the numerical experiments the spectral
values turned out to be rather insensitive to the choice of Steklov window,
and $H=\lfloor\frac M2\rfloor$ was found to be suitable 
for all results below.

\subsubsection*{Step 2: Computation of spectral bundles}
Recall for $j\in\N$ and $i\in\{1,\dots,\ell\}$ the
representation \eqref{specbun}, \eqref{fiberproj} of the spectral
bundle $\cW_j^i = \range(\cP_{j,i}) = \range (P_{j,i}^s P_{j,i+1}^u)$
with $\dim(\cW_j^i) = d_i$. For computing these sets numerically, we
apply the ansatz, proposed in 
\cite[Section 2.5]{h17}. Take $d_i$ random vectors $r_{\nu}\in\R^d$
and obtain a basis of $\cW_j^i$ (in a generic sense) by
calculating  $\cP_{j,i}r_{\nu}$ for $\nu=1,\dots,d_i$. For this task, we
choose $\gamma_i \in R_i$, $\gamma_{i+1}\in R_{i+1}$ 
close to the boundary of the spectral interval $\cI_i$, cf.\
\cite[Section 2.6]{h17}.
We solve for each $\nu\in \{1,\dots,d_i\}$ and simultaneously for
$j\in [0,M]\cap \N_0$ the inhomogeneous linear systems
\begin{equation}\label{bundle}
\begin{array}{rcl}
v_{n+1}^{\nu} &=& \displaystyle\frac 1{\gamma_{i+1}} A_n v_n^{\nu} +
            \delta_{n,j-1}r_{\nu},\\[4mm]
u_{n+1}^{\nu} &=&  \displaystyle\frac 1{\gamma_i} A_n u_n^{\nu}
  -\delta_{n,j-1}A_{j-1}v_{j-1}^{\nu},
\end{array}\quad n= -b,\dots,M+b-1           
\end{equation}
in a least squares sense. Here,
$\delta$ denotes the Kronecker symbol.
For the solutions of \eqref{bundle} one has $\cP_{j,i}r_{\nu} \approx
u_j^{\nu}$, 
and we refer to \cite[Section 2.6]{h17} for precise error estimates.
In this way, we obtain bases
  of $\cW_j^i$ for $j\in [0,M]\cap \N_0$ and $i\in\{1,\dots,\ell\}$.
    If these fiber bundles are two-dimensional, we choose  an orthonormal basis
  at each time instance. Note that an accurate computation of
  $\Phi(j,0)v$ in \eqref{winkelrechnung} for $v\in \cB_0^i$ can only be achieved by projecting
  the results to the respective spectral bundles. Thus, it does not
  suffice to have an approximation of the initial fiber
  $\cW_0^i$, only.  

\subsubsection*{Step 3: Computation of angular values}
Assume that the
spectral bundles $\cW_{j}^i$, $i=1,\dots,\ell$, $j=0,\dots,M$ have
been computed in step 2.
We present a numerical scheme for computing approximate values of
$\hat \theta_s$ in the case
$s\in\{1,2\}$. Assume $\dim(\cW_0^i) \in \{1,2\}$  and introduce
the balls 
$\cB_0^i=\{v\in \cW_0^i: \|v\|=1 \}$ for all $i\in\{1,\dots,\ell\}$.
For a subspace $V\in \cG(s,d)$ (resp. a
vector $v \in \R^d$ ) we abbreviate as in \eqref{defsums}
\begin{align}\label{winkelrechnung}
  \theta_s(V) = \frac{1}{M} a_{1,M}(V)= \frac 1M \sum_{j=1}^{M}
\ang(\Phi(j-1,0)V,\Phi(j,0)V), \quad \theta_s(v)= \theta_s(\mathrm{span}(v)).
\end{align}
Our goal is to use the $M$-dependent values $\theta_s(V)$ for 
an approximation of
\begin{equation}\label{thetafinal}
\hat \theta_s \approx \hat \theta_{s,M} := \sup_{V\in\cG(s,d)}  \theta_s(V).
\end{equation}
Starting with $s=1$ our scheme reads:
\medskip

\begin{center}
\begin{minipage}{.30\linewidth}
\begin{algorithmic}
\For {$i = 1,\dots,\ell$}
\State{$\displaystyle w^i = \max_{v \in \cB_0^i}\theta_1(v)$}
\EndFor
\State{$\hat \theta_1 = \max_{i=1,\dots,\ell} w^i$.}
\end{algorithmic}
\end{minipage}
\end{center}
\medskip

If $\dim(\cW_0^{i}) = 1$ then 
$\theta_1(\cW_0^{i})$ is computed for a single
one-dimensional subspace.
The detection of $\max_{v\in\cB_0^i}\theta_1(v)$ is a
one-dimensional optimization problem if $\dim \cW_0^i=2$.
For this task, we apply the
MATLAB-routine \texttt{fminbnd} that is based
on golden section search and parabolic interpolation.
The corresponding scheme for $s=2$ is given by:
\medskip

\begin{center}
\begin{minipage}{.5\linewidth}
\begin{algorithmic}
\State{$\kappa = 0$}
\For{$i = 1,\dots,\ell$}
\If{$\dim(\cW_0^i) = 2$}
\State{$\kappa = \kappa + 1$}
\State{$w^\kappa = \theta_2(\cW_0^i)$}
\EndIf
\EndFor
\For{$i_1 = 1,\dots,\ell-1$}
\For{$i_2  = i_1 + 1,\dots,\ell$}
\State{$\kappa = \kappa +1$}
\State{$w^\kappa = \max_{x \in \cB^{i_1}_0,\, y \in \cB^{i_2}_0} 
                   \theta_2(\mathrm{span}(x,y))$}
\EndFor
\EndFor     
\State{$\hat \theta_2 = \max_{i=1,\dots,\kappa} w^i$.}
\end{algorithmic}
\end{minipage}
\end{center}
\medskip

Note that the algorithm avoids to distinguish cases. If
$\dim(\cW_0^{i_1}) = \dim(\cW_0^{i_2}) = 1$ then 
$\theta_2(\cW_0^{i_1}\oplus\cW_0^{i_2})$ is computed for a single
two-dimensional subspace.
In the case $\dim(\cW_0^{i_1}) + \dim(\cW_0^{i_2}) = 3$, we solve
a one-dimensional optimization problem with the tools, described in
the case $s=1$. If $\dim(\cW_0^{i_1}) + \dim(\cW_0^{i_2}) = 4$, then the
optimization problem is two-dimensional, and we apply the
MATLAB-command \texttt{fminsearch}, which uses a
derivative-free method for finding minima of unconstrained multivariable
functions.

In all cases, we avoid  numerical errors during the iteration of $\Phi(j,0)x$ for
$x\in\cW_0^i$
(i.e.\ convergence towards the
most unstable direction) by
renormalizing the resulting output to $\cW_{j}^i$ after 
each step.

\subsection{Numerical experiments}
\label{sec5.2}
We apply our algorithm from Section \ref{algorithm} to several
models. First we reconsider some autonomous difference equations
from cf.\ \cite{BeFrHu20}. For this class of systems the
algorithm from \cite[Section 6]{BeFrHu20}
uses a series of Schur decompositions and  one-dimensional
optimization if necessary. Although this is more efficient for autonomous
systems, we still apply in the following our general algorithm
to the autonomous case in order to illustrate its performance.
Furthermore, we apply both algorithms in Section \ref{auto} to
autonomous systems and compare the results.

\subsubsection{Two-dimensional models}
\label{sec5.2.1}
We begin with several two-dimensional models 
for which angular values are analytically known. For these examples 
we  always find point spectrum 
which we approximate by upper and lower Bohl exponents. In some
cases upper and lower exponents coincide up to machine accuracy, while
in other models, we numerically observe intervals of length $\approx 10^{-3}$.
In the following we denote by
$T_\varphi = \left(\begin{smallmatrix} \cos \varphi &
      -\sin\varphi\\\sin\varphi & \cos\varphi\end{smallmatrix}\right)$
a rotation matrix.

\begin{table}[hbt]
\begin{center}
\begin{tabular}{c|c|c|c}
  $A_n$ & spectral intervals & $\hat \theta_{1,\mathrm{num}}$ &
  $|\hat \theta_1- \hat \theta_{1,\mathrm{num}}|$\\\hline & & \\[-4mm] 
$\left(\begin{smallmatrix}
    2 & 0\\ 0 & 3
  \end{smallmatrix}\right)$ &$\begin{array}{l}\cI_1 = [3,3]\\
  \cI_2 =[2,2]\end{array}$  & $5\cdot 10^{-15}$ & $5\cdot 10^{-15}$
  \\[1mm]\hline && \\[-4mm]   
$\left(\begin{smallmatrix}
    \cos \varphi & \sin \varphi \\ \sin \varphi & -\cos \varphi
  \end{smallmatrix}\right)$ & $\cI_1 = [1,1]$
  &$\frac \pi 2 \scriptstyle{- 3\cdot 10^{-5}} $ & $3\cdot 10^{-5}$
  \\[1mm]\hline\hline && \\[-4mm]  
$T_{(n+1)\varphi}\cdot
  \left(\begin{smallmatrix}
    2 & 0\\ 0 & 3
  \end{smallmatrix}\right)
                \cdot T_{-n \cdot\varphi}$&$\begin{array}{l}\cI_1 =
   [2.996, 3.000] \\ \cI_2=[2.000, 2.002]\end{array}$ & $\tfrac 13
  \scriptstyle{+ 6\cdot 10^{-13}}$ & $6\cdot 10^{-13}$ \\[1mm]\hline && \\[-4mm] 
$ T_{n \cdot\varphi}$&$\cI_2=[1, 1]$ & $\tfrac \pi 4
  \scriptstyle{+3.1\cdot 10^{-4}}$   & $3.1\cdot 10^{-4}$
\end{tabular}
\end{center}
\caption{Spectral intervals and the first angular value for four
  examples. 
  We set  $\varphi = \tfrac 13$ and use our algorithm
with  $M=2000$ iterates. \label{2Dnuma}}
\end{table}

The models from the first two rows in Table \ref{2Dnuma} 
are autonomous and we obtain approximately the expected results, see
\cite{BeFrHu20}. 
The second example is a reflection which  exhibits the angular value
$\hat \theta_1 = \frac \pi 2$  with a somewhat smaller error.
The third model is constructed via a nonautonomous similarity transformation
with rotation matrices, and we obtain the angular value $\hat \theta_1 =
\varphi = \frac 13$ with high accuracy.
Finally, in the last row of Table \ref{2Dnuma} we consider a rotation by the
angle $\varphi=\frac 13$ which is an irrational multiple of $\pi$.
The angle $\ang(u,T_{n\cdot\varphi} u)$ is
$\frac \pi 4$ on average, in agreement with our numerical
experiment. 

\subsubsection{Two autonomous examples}\label{auto}
Next we apply our algorithm to autonomous examples and compare
with the output of \cite[Algorithm 6.2]{BeFrHu20} based
on Schur decompositions. For this task we take the normal
form
\begin{align}\label{normalform}
     A(\rho,\varphi) =\begin{pmatrix} \cos(\varphi) & - \rho^{-1} \sin(\varphi)\\
    \rho \sin(\varphi) & \cos(\varphi) \end{pmatrix}, \quad
     0<\rho \le 1, \quad 0 <  \varphi \le \frac \pi 2
\end{align}
and consider first the matrix $A(\frac 17, \frac 13)$. The autonomous
algorithm uses an in-depth analysis of the first angular value of
\eqref{normalform}, given in 
\cite[Theorem 6.1]{BeFrHu20}. The resulting angular value is $\hat
\theta_{1,\mathrm{auto}} = 0.32106$.  
In coincidence with this result, the algorithm from Section \ref{algorithm} yields
the spectral interval $\cI_1 = [0.9991, 1.0009]$ and the angular value  $\hat
\theta_{1,\mathrm{num}} =0.32175$. 

Next, we analyze the four-dimensional matrix
$$
A = 
\begin{pmatrix}
A(1,\frac 12) & I_2\\ 0 & \eta A(\frac 12, 1.4)
\end{pmatrix}
$$
with $\eta = 1.2$. This example is crucial since the angular
  values cannot be read off from the diagonal blocks only. Rather, one has
  to compute orthogonal bases of both two-dimensional invariant subspaces.
  The analysis and the corresponding algorithm in \cite[Section 6.3.2]{BeFrHu20}
 provide the angular value $\hat \theta_{1,\mathrm{auto}} = 1.355003$.
Our current algorithm from Section \ref{algorithm} findes the spectral
intervals 
$$
\cI_1 = [1.1992, 1.2008]\quad \text{and}\quad \cI_2 = [1.0000, 1.0000]
$$
with corresponding angular values
$$
\hat \theta_{1,\mathrm{num}} = \theta_1(\cW_0^1) = 1.355095\quad
\text{and}\quad \theta_1(\cW_0^2) = 0.500000. 
$$
This fits well to the results of the autonomous algorithm. 

\subsubsection{Angular values and tangent spaces}\label{tspace}
For a geometric interpretation of angular values, we consider an
invertible discrete-time dynamical system defined on $\Z$.
Let $F_n :\R^d\to \R^d$, $n\in\Z$ be a family of
$\cC^2$- diffeomorphisms and let
\begin{equation}\label{nonlin}
x_{n+1} = F_n(x_n),\quad n \in \Z.
\end{equation}
Denote by $\Psi$ the solution operator of \eqref{nonlin}. 
For a bounded trajectory $\xi_\Z:= (\xi_n)_{n\in\Z}$, we introduce
the corresponding variational equation 
\begin{equation}\label{varia}
u_{n+1} = DF_n(\xi_n)u_n,\quad n\in\Z,
\end{equation}
with solution operator $\Phi$.   
Note that \eqref{nonlin} has the form \eqref{diffeq} with $A_n =
DF_n(\xi_n)$.
We further assume that the bounded trajectory $\xi_\Z$ is
hyperbolic, i.e.\ $1$ is in the resolvent set of the dichotomy
spectrum of \eqref{varia}. 

Stable and unstable fiber bundles of $\xi_\Z$ are defined at time
$k\in\Z$ as
\begin{align*}
\cF_k^s & := \left\{x\in\R^d: \lim_{n\to \infty} |\Psi(n,k)(x) -
             \xi_n|=0\right\},\\
\cF_k^u & := \left\{x\in\R^d: \lim_{n\to -\infty} |\Psi(n,k)(x) -
             \xi_n|=0\right\},
\end{align*}
and we denote corresponding tangent spaces by
$T_{\xi_k} \cF_k^{s}$ and $T_{\xi_k} \cF_k^{u}$. These tangent spaces are related to 
spectral bundles from Section \ref{SackerSell} as follows.
Fix $i\in \{1,\dots,\ell\}$ such that $1\in R_i$, where $R_i$ denotes
the $i$-th resolvent interval. We conclude from \cite[Theorem
4.6.4]{p10} and \eqref{dichbundle} that
\begin{equation*}
T_{\xi_k} \cF_k^{s} = \range(P_{k,i}^s) = \bigoplus_{j=i}^\ell \cW_k^j
\quad
\text{and}\quad
T_{\xi_k} \cF_k^{u} = \range(P_{k,i}^u) = \bigoplus_{j=1}^{i-1} \cW_k^j.
\end{equation*}

For two-dimensional systems with $\dim(\cF_k^s) = \dim(\cF_k^u)
= 1$, we observe that 
\begin{align*}
\theta_1(\cW_0^{2}) &= \frac 1M \sum_{j=1}^{M}
  \ang(\Phi(j-1,0)\cW_0^{2} , \Phi(j,0)\cW_0^{2})
= \frac 1M \sum_{j=1}^{M}
  \ang(T_{\xi_{j-1}}\cF_{j-1}^{s}, T_{\xi_{j}}\cF_{j}^{s}),\\
\theta_1(\cW_0^{1}) &= \frac 1M \sum_{j=1}^{M}
  \ang(\Phi(j-1,0)\cW_0^{1} , \Phi(j,0)\cW_0^{1})
= \frac 1M \sum_{j=1}^{M}
  \ang(T_{\xi_{j-1}}\cF_{j-1}^{u}, T_{\xi_{j}}\cF_{j}^{u})
\end{align*}
describes the angle between successive stable resp.\ unstable tangent
spaces on average. 
The maximum of these two averages is $\hat \theta_1 =
\max\{\theta_1(\cW_0^1),\theta_1(\cW_0^2)\}$.  

In higher dimensional systems, a geometric interpretation of angular
values is more involved. If a three-dimensional model, 
for example, satisfies $\dim(\cF_k^u) = 1$ and $\dim(\cF_k^s) = 2$, we get
for the one-dimensional unstable subspace
$$
\theta_1(\cW_0^1) = \frac 1M \sum_{j=1}^{M}
\ang(T_{\xi_{j-1}}\cF_{j-1}^u, T_{\xi_{j}}\cF_{j}^u).
$$

Next, we consider the two-dimensional stable subspace 
$$
\cW_0^s := 
\begin{cases} 
\cW_0^2, & \text{if } \dim(\cW_0^2) = 2,\\
\cW_0^2 \oplus \cW_0^3,& \text{otherwise.}
\end{cases}
$$
The first angular value 
$$
\theta_1(\cW_0^s) = \sup_{v\in T_{\xi_0} \cF_0^s} \frac 1M
\sum_{j=1}^{M} \ang(\Phi(j-1,0)v,\Phi(j,0)v)
$$
describes on average the maximal angle between successive one-dimensional
subspaces in $T_{\xi_\Z}\cF_\Z^s$. 
Combining these result gives $\hat \theta_1 =
\max\{\theta_1(\cW_0^1),\theta_1(\cW_0^s)\}$. 

For three-dimensional models, also second angular values are of
interest. 
The average angle between successive two-dimensional stable subspaces
is given by 
$$
\theta_2(\cW_0^s) = \frac 1M
\sum_{j=1}^{M}\ang(T_{\xi_{j-1}}\cF_{j-1}^s,
T_{\xi_{j}}\cF_{j}^s)
$$
and the latter formula provides a nice geometrical interpretation.    
However, for computing $\hat \theta_2$, we have to consider further
subspaces:
$$
\hat \theta_2 = \max(\{\theta_2(\cW_0^s)\}\cup \{\theta_2(V): V=
\cW_0^1\oplus \Span(u): u \in \cW_0^s\}).
$$
As a consequence, second and higher angular values are in general not
achieved within the stable resp.\ unstable tangent bundle. 

\subsubsection{Models of H\'enon type}\label{sec_hen}
We illustrate the geometric interpretation of angular values from
Section \ref{tspace} with two autonomous, nonlinear systems. 
Of interest are the two-dimensional H\'enon map \cite{h76} as well as
its three dimensional variant 
$$
F^2\begin{pmatrix}x_1\\x_2\end{pmatrix} =
  \begin{pmatrix}
  1+x_2-1.4x_1^2\\0.3x_1  
\end{pmatrix},\quad
 F^3\begin{pmatrix}x_1\\x_2\\x_3\end{pmatrix} =
 \begin{pmatrix}
  1+ x_3 - 1.4 x_1^2\\x_1+x_3\\0.2 x_1 + 0.1 x_2 
 \end{pmatrix}.
 $$
The latter model is constructed similar to
\cite[Example 2]{bk97}, and possesses, like the original  H\'enon map,
a non-trivial attractor. 
 
We choose $M=2000$ and compute angular values for the
corresponding variational equation \eqref{varia}. 
Note that we apply the algorithm to these models
  even though we do not know whether the hyperbolicity assumptions are
  satisfied. 

\subsubsection*{The two-dimensional H\'enon model}
We choose the initial point close to the
H\'enon attractor $\xi_{-50} = \begin{pmatrix} 0.7555 &
  0.1671 \end{pmatrix}^\top$ and obtain the spectral
intervals 
$$
\cI_1= [1.491, 1.549]\quad \text{and}\quad \cI_2 = [0.194, 0.201]
$$ 
with corresponding angular values
$$
\theta_1(\cW_0^1) = 0.3629,\quad
\theta_1(\cW_0^2) = 0.7506.
$$
The maximum $\hat \theta_1 = \theta_1(\cW_0^2)$ is achieved by the angle between
successive stable tangent spaces. Stable and unstable
tangent spaces are shown in Figure \ref{TangHen2}. In addition, we
present approximations of the stable and of the unstable manifold of the fixed point $\xi$. 

\begin{figure}[hbt]
 \begin{center}
   \includegraphics[width=0.75\textwidth]{ 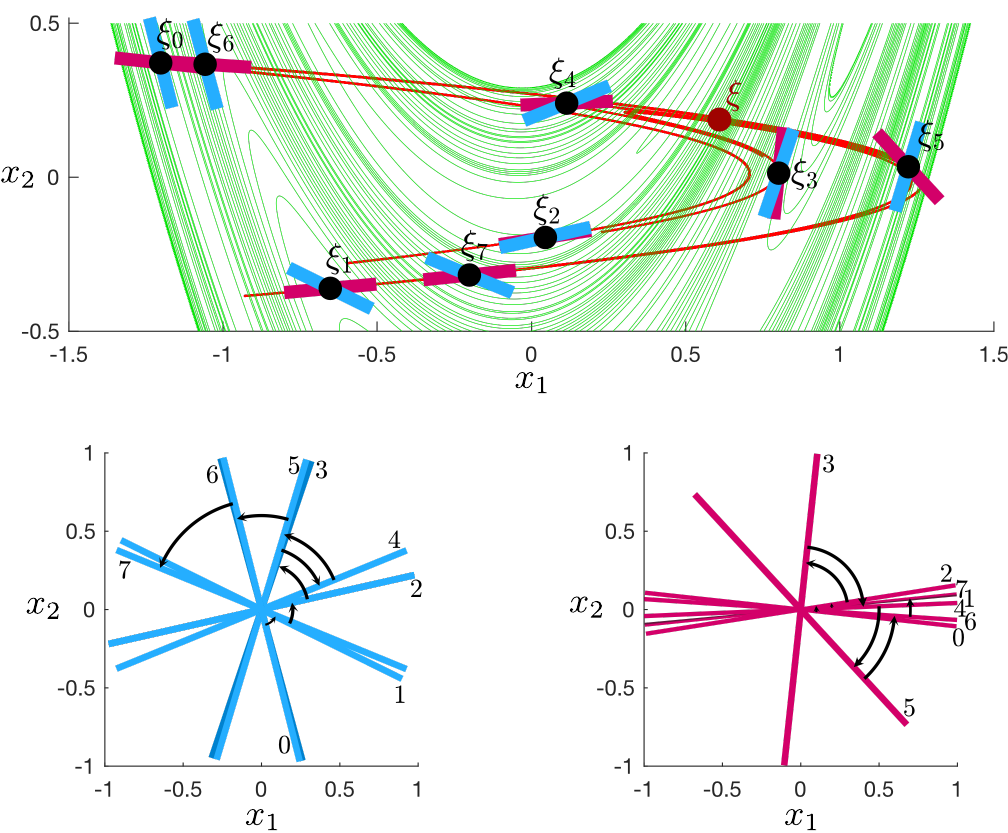}
 \end{center}
 \caption{\label{TangHen2} Upper panel: Stable (green) and unstable
   (red) manifolds of the fixed point $\xi$ of the two-dimensional H\'enon
   model. Lower panel: Successive stable (left) and 
   unstable (right) tangent spaces.}
\end{figure}

 Figure \ref{H2vary} shows the  dependence of $\hat\theta_{1,M}$ from \eqref{thetafinal} on the
 length $M$ of the finite interval. We observe the typical slow convergence of
the ergodic average \eqref{winkelrechnung} as $M\to \infty$. 
\begin{figure}[hbt]
 \begin{center}
   \includegraphics[width=0.60\textwidth]{ 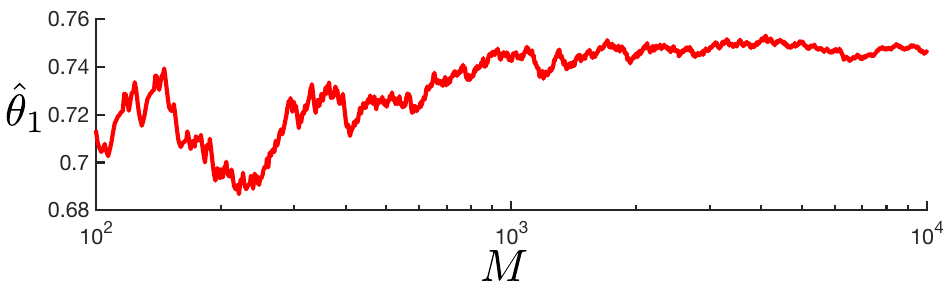}
 \end{center}
 \caption{\label{H2vary}Angular value $\hat \theta_1$ in the
   two-dimensional H\'enon model, computed for $M\in
   [10^2,10^4]\cap \N$.}
\end{figure}
So far the approximate value $\hat \theta_{1,M}$ in
\eqref{thetafinal} was computed by starting at time $k=0$. 
It is instructive to compare these results with the values obtained
from 
\begin{equation}\label{angk}
\theta_1(k,M) := \sup_{V\in\cG(1,2)} \frac 1M
\sum_{j=k+1}^{k+M}\ang(\Phi(j-1,k)V, \Phi(j,k)V), 
\end{equation}
where the computation starts at a later time $k\in\N$. 
Taking the supremum over $k$ results in an 
approximation of uniform outer angular values $\hat \theta_{[s]}$ from
Definition \ref{defangularvalues}. Here we analyze the dependence of
\eqref{angk} 
on $k\in\{0,\dots,10^4\}$ for the values $M\in\{10^2, 10^3, 10^4\}$. 
Figure \ref{Hen2k} shows the result for 
the two-dimensional H\'enon model.
\begin{figure}[hbt]
 \begin{center}
   \includegraphics[width=0.70\textwidth]{ 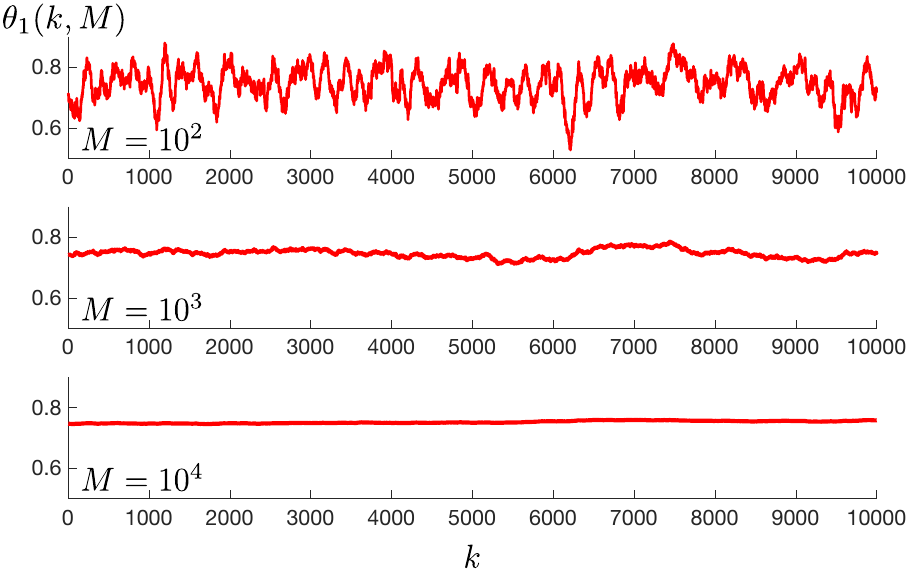}
 \end{center}
 \caption{\label{Hen2k} Computation of $\theta_1(k,M)$ for the
   two-dimensional H\'enon map.}
\end{figure}
The numerical data suggest convergence as $M\to\infty$ uniformly in
$k$. However, we argue that we do not expect this to hold in theory for the following reason.
Assume the orbit $(\xi_n)_{n\in\Z}$ is dense in the H\'enon
attractor, which contains the saddle fixed point and its unstable manifold.
Then there exist arbitrarily long time intervals on which
the orbit is close to the local unstable manifold. Thus, one finds 
arbitrarily large values of $k$ for which  $\theta_1(k,M)$ is close to $0$.
It will be extremely difficult to observe this
non-uniformity w.r.t.\ $k$ numerically since one needs $k$-values
which grow exponentially with the observation length $M$.

In the next experiment we show that a 
naive approach to  compute the angular
values by forward iteration tends to fail. We consider
\[
\theta_1(V_\varphi) \quad \text{with}\quad 
V_\varphi =
\mathrm{span}\begin{pmatrix}\cos(\varphi)\\\sin(\varphi)\end{pmatrix} 
\]
for $10^5$ equidistant values of $\varphi \in [0,\pi]$ and
display the values $\theta_1(V_{\varphi})$ from \eqref{winkelrechnung}
with $M=2000$ fixed, in Figure \ref{naiv}.
\begin{figure}[hbt]
 \begin{center}
   \includegraphics[width=0.80\textwidth]{ 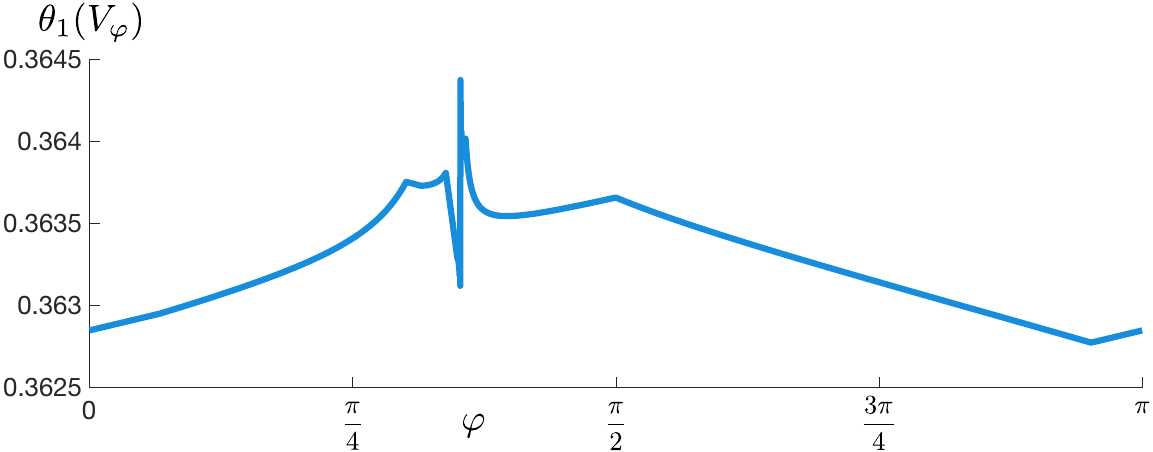}
 \end{center}
 \caption{\label{naiv}Computation of
   $\theta_1(V_\varphi)$ for the two-dimensional H\'enon map for
   $10^5$ values of $\varphi\in[0,\pi]$.} 
\end{figure}
As we know, the angular value $\hat \theta_1 = 0.7506$ is achieved
by the tangent space to the stable fiber.
However, each subspace $V_\varphi$ with $\varphi$ chosen from a very
fine grid, is pushed during
$M=2000$ iterations towards the unstable subspace. The peak at
$\varphi \approx 1.1$ indicates the appropriate stable
subspace, but the observed maximum value is only slightly larger than
$\theta_1(\cW_0^1) = 0.3629 \ll  0.7506= \hat \theta_1 $.

\subsubsection*{The three-dimensional H\'enon model}
We choose $\xi_{-81}
= \begin{pmatrix} 0.2 & 0.1 & 0\end{pmatrix}^\top$ as initial point
and obtain the spectral
intervals 
$$
\cI_1 = [1.406, 1.442],\quad \cI_2 = [0.378,
0.387]\quad  \text{and}\quad \cI_3 = [0.318, 0.333].
$$
The corresponding unstable fibers $\cW_k^{1}=T_{\xi_k}\cF_k^u$ are
one-dimensional and 
the direct sum of the stable fibers $\cW_k^2 \oplus \cW_k^3 =T_{\xi_k}\cF_k^s$ is
two-dimensional. These subspaces are 
shown in Figure \ref{TangHen3} for $0\le k\le 4$.

The angular value $\hat \theta_1 = 0.8129$ is achieved in $\cW_0^1$:
$$
\theta_1(\cW_0^1) = 0.825,\quad \theta_1(\cW_0^2) = 0.693,\quad \theta_1(\cW_0^3) = 0.757.
$$
While the first angular value $\hat \theta_1$ describes the angle
between successive unstable tangent spaces $T_{\xi_k} \cF_k^u$ on
average, the average angle between successive stable tangent spaces $T_{\xi_k}
\cF_k^s$ is given by the second angular value $\hat \theta_2 = 0.839$, see
Figure \ref{TangHen3}. This value
  is found as the maximum of the following numerical computations
$$
\theta_2(\cW_0^2\oplus \cW_0^3) = 0.839,\quad
\theta_2(\cW_0^1\oplus \cW_0^2) = 0.611,\quad
\theta_2(\cW_0^1\oplus \cW_0^3) = 0.703.
$$
Note that generally, angular values are not achieved within stable
respectively unstable subspaces. The
three-dimensional H\'enon model seems to be exceptional in this regard.
In general, invariant
subspaces (in a nonautonomous sense) in which angular values are
achieved, are not characterized by contracting or expanding dynamics;
see Section \ref{tspace} and \ref{3Dparam}. 

\begin{figure}[hbt]
 \begin{center}
   \includegraphics[width=0.75\textwidth]{ 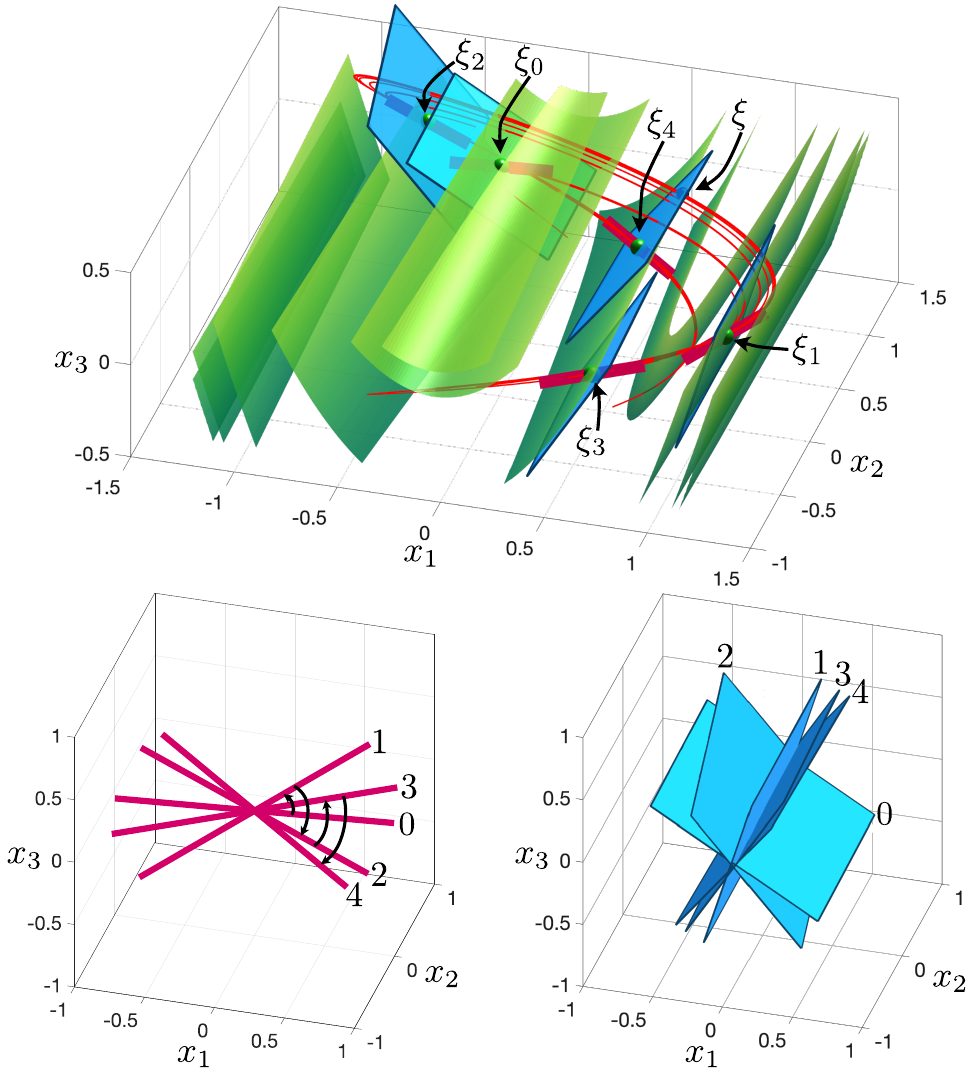}
 \end{center}
 \caption{\label{TangHen3} Upper panel: Stable (green) and unstable
   (red) manifolds 
   of the fixed point $\xi$ of the three-dimensional H\'enon model. 
  Lower panel: Successive one-dimensional unstable (left) and
   two-dimensional stable (right) tangent spaces.}
\end{figure}

\subsubsection{A three-dimensional toy model}
\label{3Dparam}
We construct a three-dimensional 
  nonautonomous model which depends on a parameter but has constant
  angular values.  However, the parameter changes the spectrum and
  the stability properties as well as the
  spectral bundles.   
   We show that the algorithm succeeds in finding the correct angular
    value regardless whether or not the optimization step is invoked.

With $\varphi = \tfrac 13$ and
parameter $\lambda>0$ the model has the form
 \begin{equation}\label{parmod}
  A_n = 
  T^{1,2}_{\varphi(n+52)}\cdot
  \begin{pmatrix}
    \frac 12 &0 &0\\ 0 &\lambda &0\\ 0 &0 &3
  \end{pmatrix}
  \cdot T^{1,2}_{-\varphi (n+51)}
  \text{ with }
  T^{1,2}_{\varphi}=
  \begin{pmatrix}
    \cos \varphi & -\sin\varphi & 0\\
    \sin \varphi & \cos \varphi & 0\\
    0 & 0 & 1
  \end{pmatrix}
  \end{equation}
and point spectrum $\{\frac 12, \lambda, 3\}$. One can show that both
angular values are independent of the parameter $\lambda$ and satisfy
$\hat \theta_1 = \hat \theta_2 = \varphi = \frac 13$. By 
choosing $\lambda=\frac 12$ and $\lambda=2$ we vary the
dimensions of the corresponding spectral bundles and study the
consequences. 

\subsubsection*{$\lambda=\tfrac 12$} The numerical  spectral intervals are
  $\cI_1=[3,3]$ and $\cI_2 = [\frac 12,\frac 12]$, and we obtain the first angular
  value
  $$
  \theta_1(\cW^1_0) = 0,\quad \max_{x\in\cB_0^2}\theta_1(x)
  = \frac 13, \quad \hat\theta_1 = \frac 13, 
  $$
 and the second angular value
  $$
   \theta_2(\cW^2_0) = 0,\quad \max_{x\in\cB_0^2}
   \theta_2( \cW_0^1\oplus\mathrm{span}(x))= \frac{1}{3},\quad
   \hat \theta_2 = \frac 13,
   $$
   cf.\ Figure \ref{P05}. Note that the subspaces, in which the
   maximum is achieved are not unique. The component in $\cW_0^2$ can
   be chosen arbitrarily.
  \begin{figure}[hbt]
 \begin{center}
   \includegraphics[width=0.7\textwidth]{ 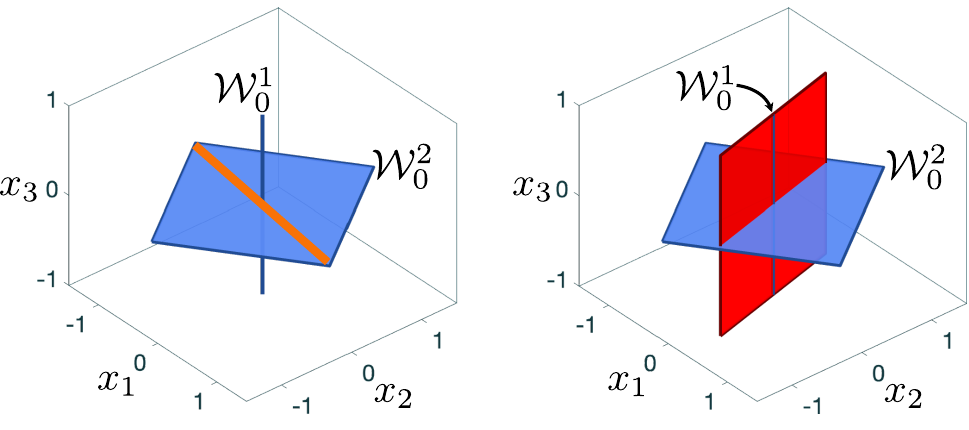}
 \end{center}
 \caption{\label{P05}Spectral bundles (blue) of \eqref{parmod} for
    $\lambda=\frac 12$. The angular values $\hat \theta_{1,2}$ are
   achieved at subspaces which are shown in red.}
\end{figure}
\subsubsection*{$\lambda= 2$} In this example, we obtain three  spectral intervals
  $\cI_1=[3,3]$, $\cI_2 = $\linebreak $ [1.9955, 2.0000]$ and $\cI_3 = [0.5000,
  0.5011]$.
  We find that 
  $$
  \theta_1(\cW^1_0) = 0,\quad  \theta_1(\cW^2_0) = \frac 13,\quad
  \theta_1(\cW^3_0) = \frac 13.
  $$
  Thus $\hat \theta_1 = \frac 13$, where the maximum is achieved for two
  fibers. In Figure \ref{P2}, the algorithm chooses $\cW_0^3$.
  The second angular value is also achieved for two different trace spaces:
  $$
  \theta_2(\cW^1_0\oplus \cW_0^2) = \frac 13 ,\quad
 \theta_2(\cW^1_0\oplus \cW_0^3) = \frac 13 ,\quad
 \theta_2(\cW^2_0\oplus \cW_0^3) = 0 ,\quad
 \hat \theta_2 = \frac 13.
 $$
 \begin{figure}[H]
 \begin{center}
   \includegraphics[width=0.7\textwidth]{ 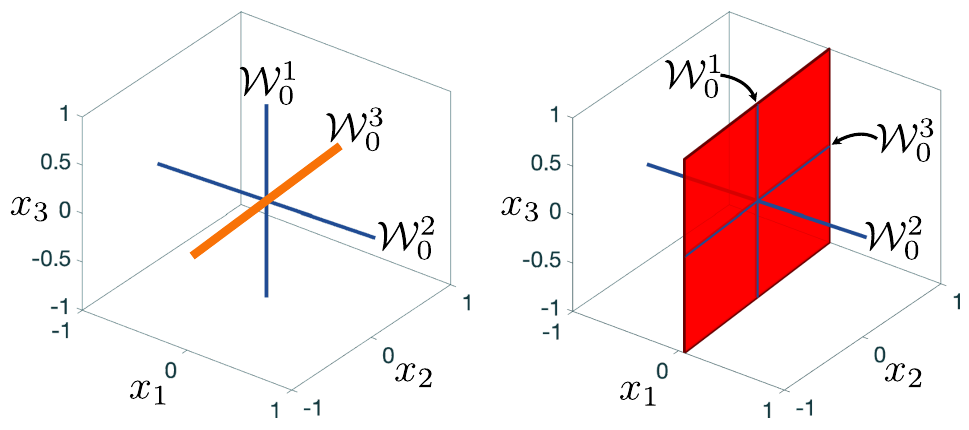}
 \end{center}
 \caption{\label{P2}Spectral bundles (blue) of \eqref{parmod} for
   $\lambda= 2$. The angular values $\hat \theta_{1,2}$ are
   achieved at subspaces which are shown in red. }
\end{figure}

  \subsubsection{A random dynamical system}
  \label{RDS}
For $\varphi = 0.2$, we define
  $$
  B_1 =
  \begin{pmatrix}
    2\cos(\varphi) & -2\sin(\varphi) & 0\\
    2 \sin(\varphi) & 2 \cos(\varphi) & 0\\
    0 & 0 & 3
\end{pmatrix},\quad
B_2 =
  \begin{pmatrix}
  1 & 0 & 0\\0 & 1 & 0\\ 0 & 0 & 5
\end{pmatrix}
$$
and construct the 3-dimensional random dynamical system
\begin{equation*}\label{rand}
A_n = B_r,\quad \text{ where } r\in\{1,2\} \text{ is uniformly
  distributed for each } n \in \I.
\end{equation*}
This random dynamical system allows an explicit study of the
dichotomy spectrum, see \cite[Remark 4.2.9]{A1998}, and of angular values.
One has $\Sigma_{\mathrm{ED}} = \{\lambda_1, \lambda_2\}$ with
$$
\lambda_1 = \sqrt{3\cdot 5} \approx 3.872,\quad \lambda_2 =
\sqrt{1\cdot 2} \approx 1.414
$$
and the corresponding fiber bundles are 
$$
\cW_0^1 = \Span\left(\begin{smallmatrix}0\\0\\1\end{smallmatrix}\right),\quad
\cW_0^2 =
\Span\left(\left(\begin{smallmatrix}1\\0\\0\end{smallmatrix}\right), 
\left(\begin{smallmatrix}0\\1\\0\end{smallmatrix}\right)\right).
$$
The first and second angular values are equal to $1$, since
$$
\theta_1(\cW_0^1) = 0,\quad
\forall x\in\cB_0^2: \theta_1(x) = \frac {0.2+0}2 = 0.1
$$
and
$$
\theta_2(\cW_0^2) = 0,\quad
\forall x\in\cB_0^2: \theta_2(\cW_0^1\oplus \Span(x)) = 0.1.
$$

These analytic results are in coincidence with the output of our
numerical algorithm.
One realization gives the spectral intervals
$\cI_1 = [3.86, 3.92]$ and $\cI_2 =[1.39, 1.42]$ and the first angular
value
$$
\theta_1(\cW^1_0) = 0,\quad  \max_{x\in\cB_0^2}\theta_1(x) =
0.099631 = \hat \theta_1.
$$
A numerical computation of the second angular value yields 
$$
\theta_2(\cW^2_0) = 0,\quad  \max_{x\in\cB_0^2}\theta_2(\cW_0^1\oplus
\mathrm{span}(x)) = 0.099631 =  \hat \theta_2.
$$

\subsubsection{Coupled oscillators}
\label{coupledoscillator}
  We consider a canonical model for two nonlinear oscillators  with a linear
  diffusion-like coupling originating from \cite{ado87}. It has been frequently used as
  a model problem for analyzing and computing invariant tori \cite{dlr91, dl95,r2000}:
\begin{equation}\label{ode}
x' = G(x),\quad
  G(x) =
  \begin{pmatrix}
 x_1 + p_1 x_2 - ( x_1^2 + x_2^2 ) x_1 - \lambda (x_1 + x_2 - x_3 - x_4 ) \\
 - p_1 x_1 +  x_2 - ( x_1^2 + x_2^2 ) x_2 - \lambda (x_1 + x_2 - x_3  -x_4 ) \\
 x_3 + p_2 x_4 - ( x_3^2 + x_4^2 ) x_3 + \lambda (x_1 + x_2 - x_3 - x_4 ) \\
 - p_2 x_3 + x_4 - ( x_3^2 + x_4^2 ) x_4 + \lambda (x_1 + x_2 - x_3 - x_4 ) 
\end{pmatrix}.
\end{equation}
For parameter values $\lambda$ in some interval $[0,\lambda_{\mathrm{crit}})$
  the model possesses an invariant torus which breaks down at a critical
  value $\lambda_{\mathrm{crit}}>0$; see \cite{dl97}, \cite{r2000}
  for a thorough analysis of this phenomenon and numerical results
  for  $p_1 = p_2 = 0.55$ where $\lambda_{\mathrm{crit}}\approx 0.2607$. It is worth noting that this system has
  a $\Z_2$-symmetry which can be written as $G(Sx)=SG(x), x\in \R^4$ for
  the permutation $S(x_1,x_2,x_3,x_4)^\top=(x_3,x_4,x_1,x_2)^\top$.
In the following we consider the $1$-flow
$F:\R^4\to \R^4$ of \eqref{ode} which inherits the symmetry $S\circ F=F\circ S$.
In particular, the symmetric space $X_s=\{x\in \R^4:Sx=x\}$ and the antisymmetric space $X_a=\{x\in \R^4:Sx=-x\}$ are invariant. As in
Section \ref{tspace} we determine angular values of the variational
equation along an $F$-orbit. The $1$-flow is approximated by  
the explicit Euler scheme which respects the symmetry.
The following data are obtained with step size $h=0.01$, and
orbit length $M=1000$.
For $21$ equidistant values of $\lambda \in [0.1, 0.3]$, we compute
the first and second angular value in Figure \ref{la0103}.
We choose for each value of $\lambda$ a random initial point in $[-1,1]^4$.  
\begin{figure}[hbt]
 \begin{center}
   \includegraphics[width=0.7\textwidth]{ 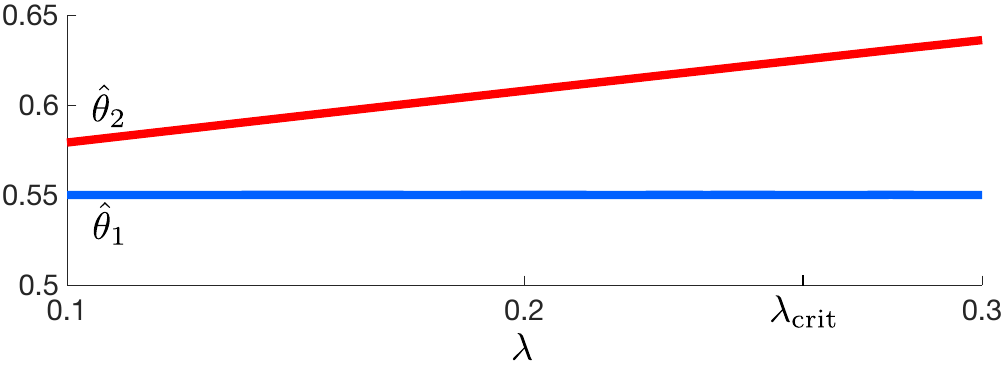}
 \end{center}
 \caption{\label{la0103} First (blue) and second angular value (red) for
   $\lambda\in [0.1,0.3]$.}
\end{figure}

 The breakdown of the invariant torus can be understood by comparing Lyapunov-type
numbers which measure the contraction within the torus and toward the
torus; see \cite{dl97}. Angular values are not suitable for this task.
They measure the maximal average rotation of one- and
two-dimensional subspaces in an attractor of the system. In the
coupled oscillator system almost all trajectories converge to the
in-phase symmetric orbit 
(lying on the torus if it exists). Thus 
the spectral bundle consists of the Floquet spaces (including the flow
direction) and the angular 
values represent the maximal rotation of one- or two dimensional
subspaces composed of Floquet spaces.
In case of an antisymmetric initial value (and since the
computation preserves antisymmetry) the orbit converges towards
the unstable antisymmetric periodic orbit (lying on the invariant torus
if it exists). Then angular values measure the maximum rotation of
Floquet subspaces belonging to this unstable orbit. For both initial
data the corresponding angular value passes smoothly through the
parameter domain where the torus breaks down.
Close to the breakdown of the invariant torus, we compute two orbits for
$\lambda =0.26$ with antisymmetric initial value $x_{a}=(0.1, 1, -0.1,
-1)^\top$ and nonsymmetric initial value $x_{r}=(0.1, 1, 0.2,
0.1)^\top$ at time $-50$,
shown in Figure
\ref{orb026}. Table \ref{tab1} and \ref{tab2} contain the corresponding
spectral intervals and angular values. Note that one spectral interval
always contains the trivial Floquet multiplier $1$ and that the symmetric
periodic orbit is orbitally stable while the antisymmetric one is unstable.
The findings are in accordance with the results in \cite{r2000}. 
The corresponding (Floquet-) subspaces at
time $50$ are depicted in Figure \ref{UR_Rot}. It is worth noting
that the $2$D fibers $\cW_0^1 \oplus \cW_0^3$ and $\cW_0^2 \oplus \cW_0^4$ 
agree with the symmetric resp. the antisymmetric space. Their invariance
ensures that the second angular value is zero as confirmed by the
corresponding entries in Table \ref{tab2}.
\begin{figure}[hbt]
 \begin{center}
   \includegraphics[width=0.5\textwidth]{ 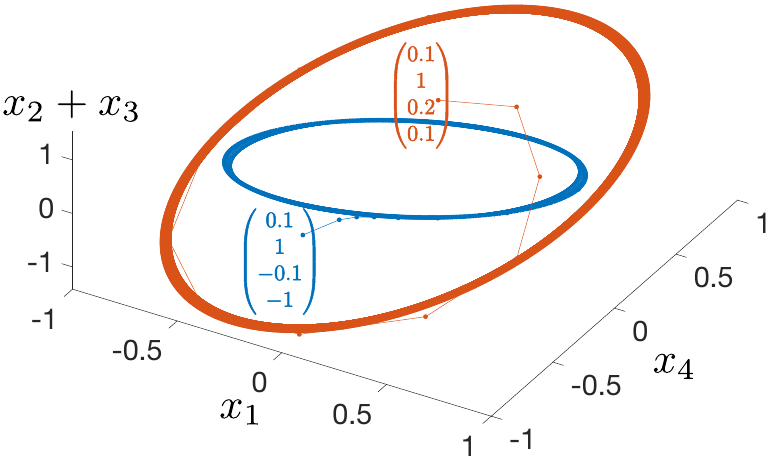}
 \end{center}
 \caption{\label{orb026} Two orbits for $\lambda =
0.26$ with antisymmetric and arbitrary initial value.}
\end{figure}

\begin{table}
\begin{center}
\begin{tabular}{c|c|c||c|c}
initial data  & \multicolumn{2}{c}{antisymmetric $x_a$} &
                \multicolumn{2}{c}{nonsymmetric $x_r$}\\
$1$D trace space & interval & $\theta_1$  & interval & $\theta_1$ \\\hline 
  $\cW_0^1$ & $[1.186, 1.195]$ &$0.194$ &$[0.999, 1.000]$&$0.550$\\[1mm]
  $\cW_0^2$ & $[0.995, 1.005]$ &$0.179$&$[0.621, 0.624]$&$0.549$\\[1mm]
  $\cW_0^3$ &$[0.902, 0.918]$ &$0.213$&$[0.132, 0.133]$&$0.550$\\[1mm]
  $\cW_0^4$ &$[0.379, 0.383]$ &$0.179$ &$[0.074, 0.0749]$&$0.550$\\
\end{tabular}
\end{center}
\caption{\label{tab1}First angular values for the trace spaces ($\approx$ $1$D Floquet spaces) of the orbits from Figure \ref{orb026} converging to
  the antisymmetric resp. the symmetric periodic orbit.
The maximum is $\hat{\theta}_1=0.213$ resp. $\hat{\theta}_1=0.550$.}
\end{table}

\begin{table}
\begin{center}
\begin{tabular}{c|c|c}
 initial data & antisymmetric $x_a$ & nonsymmetric $x_r$\\
 $2$D trace space & $\theta_2$  & $\theta_2$ \\\hline 
  $\cW_0^1\ \oplus \cW_0^2$ & $0.301$ & $0.625$\\[1mm]
  $\cW_0^1\ \oplus \cW_0^3$ &$0.000$ &$0.000$\\[1mm]
  $\cW_0^1\ \oplus \cW_0^4$ &$0.256$ & $0.625$\\[1mm]
  $\cW_0^2\ \oplus \cW_0^3$ &$0.311$ & $0.625$\\[1mm]
  $\cW_0^2\ \oplus \cW_0^4$ &$0.000$ & $0.000$ \\[1mm]
  $\cW_0^3\ \oplus \cW_0^4$ &$0.258$ &$0.625$ \\[1mm]
\end{tabular}
\end{center}
\caption{\label{tab2}Second angular value for the trace spaces ($\approx$ $2$D Floquet spaces) of the orbits from Figure \ref{orb026} converging to the
  antisymmetric resp. the symmetric periodic orbit. The maximum is $\hat{\theta}_2= 0.311$ resp. $\hat{\theta}_2=0.625$}
\end{table}

\begin{figure}[H]
 \begin{center}
   \includegraphics[width=0.8\textwidth]{ 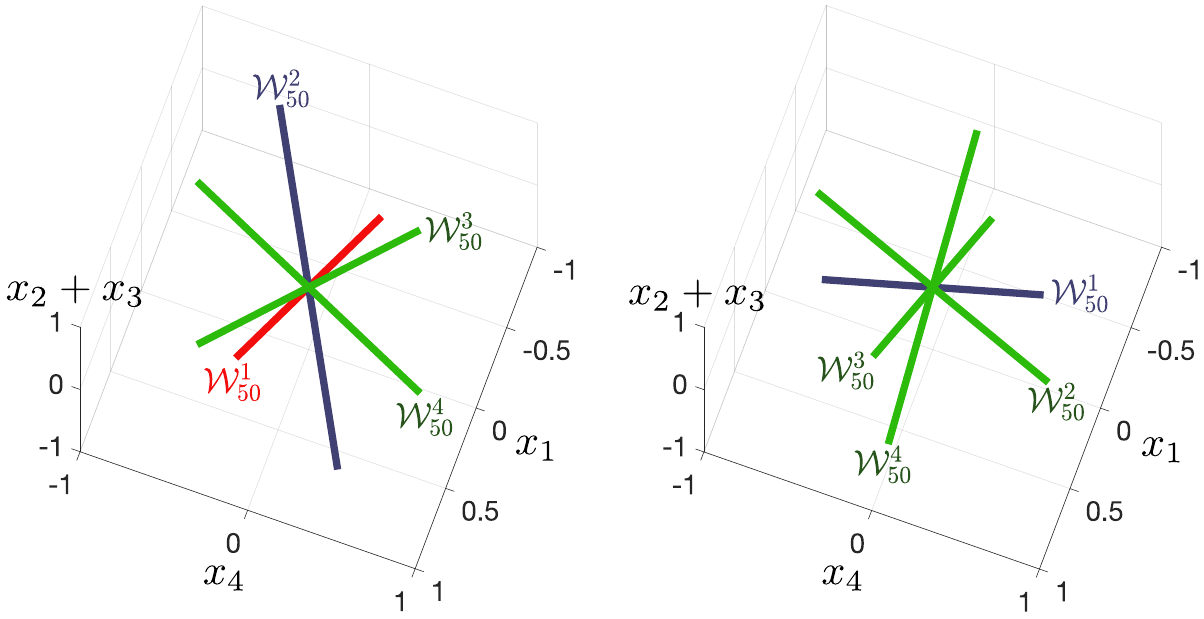}
 \end{center}
 \caption{\label{UR_Rot} Spectral bundle of $4$ subspaces ($\approx$ Floquet spaces) at time $50$ for
   $\lambda =0.26$ and antisymmetric (left) resp. nonsymmetric initial data
   (right). Coloring: green (stable), red (unstable), blue (neutral
   $\approx$ flow direction).}
\end{figure}

\section*{Acknowledgments}
The authors thank Gary Froyland for valuable comments which led to
improvements of this paper. Our thanks also go to an anonymous referee
  whose comments led to substantial clarifications, in particular in
  Section \ref{sec5}. 
Both authors are grateful to the Research Centre for Mathematical
Modelling ($\text{RCM}^2$) at Bielefeld University for continuous
support of their joint research. 
For further support,   WJB thanks the CRC 1283 "Taming Uncertainty and
Profiting from Randomness and Low Regularity in Analysis, Stochastics
and Their Applications" 
and TH thanks the Faculty of Mathematics at Bielefeld
University. 
 

\bibliographystyle{abbrv}

\begin{thebibliography}{10}

\bibitem{A1998}
L.~Arnold.
\newblock {\em Random dynamical systems}.
\newblock Springer Monographs in Mathematics. Springer-Verlag, Berlin, 1998.

\bibitem{AS1989}
L.~Arnold and L.~San~Martin.
\newblock A multiplicative ergodic theorem for rotation numbers.
\newblock {\em J. Dynam. Differential Equations}, 1(1):95--119, 1989.

\bibitem{ado87}
D.~G. Aronson, E.~J. Doedel, and H.~G. Othmer.
\newblock An analytical and numerical study of the bifurcations in a system of
  linearly-coupled oscillators.
\newblock {\em Phys. D}, 25(1-3):20--104, 1987.

\bibitem{ak01}
B.~Aulbach and J.~Kalkbrenner.
\newblock Exponential forward splitting for noninvertible difference equations.
\newblock {\em Comput. Math. Appl.}, 42(3-5):743--754, 2001.

\bibitem{as01}
B.~Aulbach and S.~Siegmund.
\newblock The dichotomy spectrum for noninvertible systems of linear difference
  equations.
\newblock {\em J. Differ. Equations Appl.}, 7:895--913, 2001.

\bibitem{Barabanov2017}
A.~Babiarz, A.~Czornik, M.~Niezabitowski, E.~Barabanov, A.~Vaidzelevich, and
  A.~Konyukh.
\newblock Relations between {B}ohl and general exponents.
\newblock {\em Discrete Contin. Dyn. Syst.}, 37(10):5319--5335, 2017.

\bibitem{Barabanov2001}
E.~A. Barabanov and A.~V. Konyukh.
\newblock Bohl exponents of linear differential systems.
\newblock {\em Mem. Differential Equations Math. Phys.}, 24:151--158, 2001.

\bibitem{barreira2017}
L.~Barreira.
\newblock {\em Lyapunov exponents}.
\newblock Birkh\"{a}user/Springer, Cham, 2017.

\bibitem{BeFrHu20}
W.-J. Beyn, G.~Froyland, and T.~H\"{u}ls.
\newblock Angular values of nonautonomous and random linear dynamical systems:
  {P}art {I}---{F}undamentals.
\newblock {\em SIAM J. Appl. Dyn. Syst.}, 21(2):1245--1286, 2022.

\bibitem{bk97}
W.-J. Beyn and J.-M. Kleinkauf.
\newblock Numerical approximation of homoclinic chaos.
\newblock {\em Numer. Algorithms}, 14:25--53, 1997.

\bibitem{co78}
W.~A. Coppel.
\newblock {\em Dichotomies in Stability Theory}.
\newblock Springer, Berlin, 1978.
\newblock Lecture Notes in Mathematics, Vol. 629.

\bibitem{dk74}
J.~L. Dalecki\u{\i} and M.~G. Kre\u{\i}n.
\newblock {\em Stability of Solutions of Differential Equations in {B}anach
  Space}.
\newblock American Mathematical Society, Providence, R.I., 1974.

\bibitem{dMvS1993}
W.~de~Melo and S.~van Strien.
\newblock {\em One-dimensional dynamics}, volume~25 of {\em Ergebnisse der
  Mathematik und ihrer Grenzgebiete (3)}.
\newblock Springer-Verlag, Berlin, 1993.

\bibitem{de06}
L.~Dieci and C.~Elia.
\newblock The singular value decomposition to approximate spectra of dynamical
  systems. {T}heoretical aspects.
\newblock {\em J. Differential Equations}, 230(2):502--531, 2006.

\bibitem{dev10}
L.~Dieci, C.~Elia, and E.~Van~Vleck.
\newblock Exponential dichotomy on the real line: {SVD} and {QR} methods.
\newblock {\em J. Differential Equations}, 248(2):287--308, 2010.

\bibitem{dl95}
L.~Dieci and J.~Lorenz.
\newblock Computation of invariant tori by the method of characteristics.
\newblock {\em SIAM J. Numer. Anal.}, 32(5):1436--1474, 1995.

\bibitem{dl97}
L.~Dieci and J.~Lorenz.
\newblock Lyapunov-type numbers and torus breakdown: numerical aspects and a
  case study.
\newblock volume~14, pages 79--102. 1997.
\newblock Dynamical numerical analysis (Atlanta, GA, 1995).

\bibitem{dlr91}
L.~Dieci, J.~Lorenz, and R.~D. Russell.
\newblock Numerical calculation of invariant tori.
\newblock {\em SIAM J. Sci. Statist. Comput.}, 12(3):607--647, 1991.

\bibitem{Edmunds02}
D.~E. Edmunds and W.~D. Evans.
\newblock {\em Spectral theory and differential operators}.
\newblock Oxford mathematical monographs. Clarendon Press, Oxford, repr. (with
  corr.), digital print. edition, 2002.

\bibitem{fhmw13}
G.~Froyland, T.~H{\"u}ls, G.~P. Morriss, and T.~M. Watson.
\newblock Computing covariant {L}yapunov vectors, {O}seledets vectors, and
  dichotomy projectors: {A} comparative numerical study.
\newblock {\em Phys. D}, 247:18--39, 2013.

\bibitem{GvL2013}
G.~H. Golub and C.~F. Van~Loan.
\newblock {\em Matrix computations}.
\newblock Johns Hopkins Studies in the Mathematical Sciences. Johns Hopkins
  University Press, Baltimore, MD, fourth edition, 2013.

\bibitem{h76}
M.~H\'{e}non.
\newblock A two-dimensional mapping with a strange attractor.
\newblock {\em Comm. Math. Phys.}, 50(1):69--77, 1976.

\bibitem{he81}
D.~Henry.
\newblock {\em Geometric Theory of Semilinear Parabolic Equations}.
\newblock Springer, Berlin, 1981.

\bibitem{hu09}
T.~H{\"u}ls.
\newblock Computing {S}acker-{S}ell spectra in discrete time dynamical systems.
\newblock {\em SIAM J. Numer. Anal.}, 48(6):2043--2064, 2010.

\bibitem{h17}
T.~H\"{u}ls.
\newblock Computing stable hierarchies of fiber bundles.
\newblock {\em Discrete Contin. Dyn. Syst. Ser. B}, 22(9):3341--3367, 2017.

\bibitem{hp14}
T.~H\"{u}ls and C.~P\"{o}tzsche.
\newblock Qualitative analysis of a nonautonomous {B}everton-{H}olt {R}icker
  model.
\newblock {\em SIAM J. Appl. Dyn. Syst.}, 13(4):1442--1488, 2014.

\bibitem{js96}
S.~Jiang.
\newblock Angles between {E}uclidean subspaces.
\newblock {\em Geom. Dedicata}, 63(2):113--121, 1996.

\bibitem{k94}
J.~Kalkbrenner.
\newblock {\em Exponentielle {D}ichotomie und chaotische {D}ynamik
  nichtinvertierbarer {D}ifferenzengleichungen}, volume~1 of {\em Augsburger
  Mathematisch-Naturwissenschaftliche Schriften}.
\newblock Dr. Bernd Wi\ss ner, Augsburg, 1994.

\bibitem{KH95}
A.~B. Katok and B.~Hasselblatt.
\newblock {\em Introduction to the modern theory of dynamical systems}.
\newblock Encyclopedia of mathematics and its applications ; vol. 54. Cambridge
  University Press, Cambridge, 1995.

\bibitem{N1971}
Z.~Nitecki.
\newblock {\em Differentiable dynamics. {A}n introduction to the orbit
  structure of diffeomorphisms}.
\newblock The M.I.T. Press, Cambridge, Mass.-London, 1971.

\bibitem{p30}
O.~Perron.
\newblock Die {S}tabilit\"atsfrage bei {D}ifferentialgleichungen.
\newblock {\em Math. Z.}, 32(1):703--728, 1930.

\bibitem{p10}
C.~P\"{o}tzsche.
\newblock {\em Geometric theory of discrete nonautonomous dynamical systems},
  volume 2002 of {\em Lecture Notes in Mathematics}.
\newblock Springer-Verlag, Berlin, 2010.

\bibitem{p16}
C.~P\"{o}tzsche.
\newblock Dichotomy spectra of triangular equations.
\newblock {\em Discrete Contin. Dyn. Syst.}, 36(1):423--450, 2016.

\bibitem{r2000}
V.~Reichelt.
\newblock Computing invariant tori and circles in dynamical systems.
\newblock In {\em Numerical methods for bifurcation problems and large-scale
  dynamical systems ({M}inneapolis, {MN}, 1997)}, volume 119 of {\em IMA Vol.
  Math. Appl.}, pages 407--437. Springer, New York, 2000.

\bibitem{ss78}
R.~J. Sacker and G.~R. Sell.
\newblock A spectral theory for linear differential systems.
\newblock {\em J. Differential Equations}, 27(3):320--358, 1978.

\bibitem{v52}
R.~E. Vinograd.
\newblock On a criterion of instability in the sense of {L}yapunov of the
  solutions of a linear system of ordinary differential equations.
\newblock {\em Doklady Akad. Nauk SSSR (N.S.)}, 84:201--204, 1952.

\end{thebibliography}

\end{document}